\documentclass[11pt]{article}

\usepackage[utf8]{inputenc}
\usepackage[width=17cm,height=21cm]{geometry}
\usepackage[many]{tcolorbox}
\usepackage{color}
\usepackage{graphicx}
\usepackage{pdfpages}
\usepackage{hyperref}
\usepackage[capitalise]{cleveref}
\usepackage{amssymb,amsthm,amsmath}
\usepackage{nicefrac}
\usepackage{stmaryrd}
\usepackage{authblk}
\usepackage{float}
\usepackage{subcaption}
\usepackage[ruled,vlined]{algorithm2e}
\usepackage[pagewise]{lineno}

\newcommand{\R}{\mathbb{R}}
\newcommand{\Z}{\mathbb{Z}}
\newcommand{\e}{\varepsilon}
\newcommand{\di}[1]{\,\mathrm{d}#1}
\newcommand{\dive}{\operatorname{div}}
\newcommand{\interior}{\operatorname{int}}
\newcommand{\ddt}{\frac{\operatorname{d}}{\operatorname{d}t}}
\newcommand{\twosc}{\stackrel{2}{\rightharpoonup}}

\newcommand{\thetaff}{\theta^{ff}}
\newcommand{\me}{\textcolor{black}}
\newcommand{\tf}{\textcolor{black}}
\newcommand{\menew}{\textcolor{black}}
\newcommand{\tfnew}{\textcolor{black}}
\newcommand{\tflast}{\textcolor{black}}

\newcommand\restr[2]{{
  \left.\kern-\nulldelimiterspace 
  #1 
  \vphantom{\big|} 
  \right|_{#2} 
  }}

\newtheorem{theorem}{Theorem}

\newtheorem{lemma}{Lemma}

\newtheorem{remark}{Remark}

\crefname{lemma}{Lemma}{lemmas}
\Crefname{lemma}{Lemma}{Lemmas}
\crefname{thm}{theorem}{theorems}
\Crefname{thm}{Theorem}{Theorems}

\tikzset{every picture/.style={line width=0.75pt}}

\newtcolorbox{mybox}[1]{%
    tikznode boxed title,
    enhanced,
    arc=0mm,
    interior style={white},
    attach boxed title to top left= {yshift=-\tcboxedtitleheight/2-0.05cm, xshift=0.7cm},
    fonttitle=\small\bfseries,
    colbacktitle=white,coltitle=black,
    boxed title style={size=small,colframe=white,boxrule=0pt},
    title={#1}}

\begin{document}

\title{Effective Heat Transfer Between a Porous Medium and a Fluid Layer: Homogenization and Simulation}
\author[$\dagger$]{Michael Eden}
\author[$\star$]{Tom Freudenberg}

\affil[$\dagger$]{Department of Mathematics and Computer Science, Karlstad University, Sweden}
\affil[$\star$]{Center for Industrial Mathematics, University of Bremen, Germany}

\maketitle
\begin{abstract}
We investigate the effective heat transfer in complex systems involving porous media and surrounding fluid layers in the context of mathematical homogenization.
We differentiate between two fundamentally different cases: Case (a), where the solid part of the porous media consists of disconnected inclusions, and Case (b), where the solid matrix is connected.
For both scenarios, we consider a heat equation with convection where a small scale parameter $\e>0$ characterizes the heterogeneity of the porous medium and conduct a limit process $\e\to0$ via two-scale convergence for the solutions of the $\e$-problems.
In Case (a), we arrive at a one-temperature problem exhibiting a memory term and, in Case (b), at a two-phase mixture model.
We compare and discuss these two limit models with several simulation studies both with and without convection.
\end{abstract}

{\bf Keywords: Homogenization, mathematical modeling, FEniCS simulations, heat transfer, memory terms}  

{\bf MSC2020: 80M40, 35B27, 65N30, 80A19, 76S05}

\section{Introduction}
The effective transfer of (heat) energy at interfaces between fluid-saturated porous media and adjacent fluid layers plays an important role in many applications; drying processes, metalworking with cutting fluids, geothermal engineering, or transpiration cooling to name just a few \cite{AV01}.
The exact conditions usually depend not just on the specific application but also on the precise geometric setup of the porous medium.
One specific application motivating our research is the impact of cooling fluids during grinding processes.
Some grinding wheels exhibit a porous \me{structure} due to the binding material while the mechanical forces during grinding processes lead to a substantial heat production.
But, as there is a constant resupply of cooling fluid, \emph{local thermal equilibrium} (\cite{W99}) between the fluid inside the porous media and the solid matrix may not be maintained.
\me{This means that there could be a temperature difference across the solid-fluid interface}.
In addition, the precise transfer conditions for the heat across the interface between porous media and fluid layer remain unclear.
\menew{A similar scenario for non-porous grinding wheels with heat resistivity was investigated in \cite{GuShillor2004}; and an extensive, comparative assessment of different approaches of modeling the heat dynamics in surface grinding can be found in \cite{Yang2023}.
In \cite{Heinzel2020}, the authors look at the general interactions between cooling fluids and grinding wheels, also in the case of porous grinding wheels, but without looking at heat dynamics.}
Some early experimental investigations and corresponding simulations for a grinding scenario that takes into account the porous bonding structure of the grinding wheel can be found in \cite{WBE22}.

From an engineering, more applied perspective, this search for effective transfer conditions is often conducted via volume averaging techniques \cite{W99}.
Regarding the specific case of effective heat transfer conditions, the seminal works of Ochoa-Tapia and Whitaker \cite{OT98, OT98b}, where flux conditions were formally derived under the assumption of continuity of temperature and velocity across the interface, stands out; though, as is often the case, there were some earlier contributions as well, e.g., \cite{P90,SK94,VT87}.
With similar approaches several different models and conditions were derived and discussed, cf.~\cite{AC11,AA21,JC09,N12,YV11}.
However, there still seems to be some disagreement about \tfnew{the circumstances under which} those models are valid \cite{N12}.

A different strategy is usually chosen in the context of mathematical homogenization, where limits are investigated with respect to a scale parameter $\e$.
Two-temperature models describing the dispersive and convective transport in porous media \me{(without adjacent fluid layer and in a stationary setting)} were derived in \cite{GP17}.
Closely related, thermo-elasticity problems for different two-phase structures were derived in \cite{EM17,EH15} \me{and the effective heat conductivity for two-phase composites with heat resistance was investigated in \cite{LV96}.
These works do not consider an adjacent fluid layer and do not include convection.}
Similar geometric setups \me{(porous media with adjacent fluid layer)} can be found in fluid scenarios (without heat) involving an interface between porous media and fluid layers, e.g., \cite{ER21,JM00,MM12}.
To our knowledge, there does not yet exist a rigorous investigation of the effective heat transfer for porous media in contact with a fluid layer.
\me{In this work, we are deriving effective models for two different geometrical setups and are able to recover the models presented in \cite{OT98,YV11}.
Consequently, our work gives theoretical support to those models.}

In this work, we start with a two-domain heat equation with imperfect heat transfer between fluid and solid phase and with a prescribed fluid velocity.
By conducting a limit analysis with respect to a scale parameter $\e$, which represents the size of the porous media's microstructures, we identify effective model descriptions.
This is done for two different sets of microstructures (see also \cref{fig:geometry}):
\begin{itemize}
    \item \textbf{Case (a)}: Disconnected solid parts \menew{are periodically distributed in the fluid}. Here, we arrive at a two-scale problem where the solid takes the role of distributed microstructures similarly as in \cite{EM22,SV04} (\cref{hom:a}).
We show that the solid temperature can be eliminated from this model via a memory term, leading to a non-local in time parabolic limit system (\cref{hom:b2}).
For examples with comparable memory terms in the context of porous media, we point to \cite{A13,A92,BLM96,EB14,PSY15}.

\me{Our limit problem in Case (a) reduces to the system given in \cite[Theorem 6]{Pol15} when we do not consider any fluid flow, remove the adjacent fluid layer, and go to a stationary setting.}
    \item \textbf{Case (b)}: Both the pore space and the solid matrix are connected. Here, the limit problem is structurally a two-phase mixture model similar to \cite{GPS14,YV11} (\cref{hom:b}).

    \me{In the absence of an additional fluid layer and considering a stationary setup, the two-phase mixture model in Case (b) simplifies to the models presented in \cite{GP17, RP05}. Additionally, the limit system for the disconnected case coincides with the model derived through RVE averaging in \cite[Eqs. (65c-e)]{OT98}.
It is worth noting that the RVE averaging technique lacks mathematical rigor and relies on challenging-to-verify assumptions such as \emph{local gradient equilibrium} \cite{N12, OT98}.
In this context, \cref{hom:b} provides mathematical justification and support for the averaging results obtained in \cite{OT98}.
Furthermore, our model aligns with the 1D model proposed in \cite{YV11}.}
\end{itemize}

\me{The case of perfect heat transfer (i.e., continuity of temperature across the solid-fluid interface) has also been studied in the literature via RVE averaging, e.g., \cite{OT98b}.
See also \cref{rem:perf_trans} where we discuss this case further and point to the expected changes in the limit problem in this case.}

Numerical investigations for similar limit problems can be found in the literature, e.g., \cite{AC11,AV01,P90}.
In our finite element simulations (using FEniCS), we experiment with different parameters (e.g., heat conductivity, permeability) to compare the two different models both with and without convection.
In particular, we are able to highlight the transition from Case (b) to Case (a) for geometries with bad connectivity (see \cref{ssec:transition}).

This paper is structured as follows: In \cref{sec:setup}, we introduce the mathematical model and the two different geometric setups. 
This is followed by \cref{sec:main_results}, where we present the main results, in particular the homogenization limits for the disconnected case (\cref{hom:a}) and the connected case (\cref{hom:b}).
The detailed proof of these limits via two-scale convergence is presented in \cref{sec:homogenization}.
Finally, in \cref{sec:simulations}, several simulation results are used to compare the different homogenization limits in cases with and without convection.

\section{Setup, mathematical model, and assumptions}\label{sec:setup}
In this section, we provide the two different geometric setups as well as the mathematical model we are considering in this work.
We also collect the assumptions on the coefficients and data. 
Please note, that in the following, we take superscripts $f$ and $s$ to denote domains, functions, and coefficients related to the fluid and solid domain, respectively.
In addition, the superscript $ff$ denotes anything purely related to the \emph{free fluid}, i.e., the adjacent fluid layer, \menew{and the superscript $p$ to the porous medium (see \cref{fig:geometry}).}

First, let $S=(0,T)$, $T>0$, represent the time interval of interest.
For some $H>0$, let $\Omega=\tilde{\Omega}\times(0,H)\subset\R^d$ ($d=2,3$) where $\tilde{\Omega}\subset\R^{d-1}$ is a bounded Lipschitz domain.
\me{In other words, $\Omega$ is a cylinder of $\tilde{\Omega}$.}
We assume that $\Omega$ is a finite union of axis-parallel cubes with corner coordinates in $\Z^{d}$.
\menew{This technical mathematical assumption ensures the existence of $\e$-uniform Sobolev extension operators, see \cref{ext_op}.
Without this assumptions, such operators might not exist \cite{Acerbi1992}.}
\me{We denote the unit normal vector of $\partial\Omega$ pointing outwards of $\Omega$ by $\nu=\nu(x)$ for $x\in\partial\Omega$}.
We subdivide $\Omega$ into subdomains $\Omega^p=\tilde{\Omega}\times(0,h)$ and $\Omega^{ff}=\tilde{\Omega}\times(h,H)$ (for some $0<h<H$) representing the porous domain and the domain of free-flowing fluid.
The interface between these subdomains is denoted by $\Sigma=\tilde{\Omega}\times\{h\}$ \me{and its unit normal vector pointing outwards $\Omega^{ff}$ by $n_\Sigma=n_\Sigma(x)$, $x\in\Sigma$}.

Now, for the reference geometry, let $Y=(0,1)^d$.
Take $Y^{f},$ $Y^{s}\subset Y$ to be two disjointed Lipschitz domains such that $Y=Y^{f}\cup Y^{s}\cup \Gamma$ where $\Gamma:=\overline{Y^{f}}\cap\overline{Y^{s}}$.
Also, let $\Sigma_0=(0,1)^{d-1}\times\{0\}$ denote the lower face of $Y$.
Let $\e_0>0$ be chosen such that \menew{both $\Omega^p$ and $\Omega^{ff}$} can be perfectly tiled with $\e_0Y$ cells and set $\e_n=2^{-n}\e_0$.
We introduce the periodic structures (for the sake of readability we suppress the subscript $n$ and just write $\e$)
    \[
    Z_\e^r=\interior\left(\bigcup_{k\in\Z^d}\e(\overline{Y^{r}}+k)\right)\quad (r=s,f)
    \]
and consider two distinct specific cases:\footnote{There are of course scenarios that are not covered by either case like $Y^s$ being an open ball touching the external boundary of $Y$.}
\begin{itemize}
    \item Case (a): We assume $\overline{Y^s}\subset Y$. As a consequence, $Z_\e^s$ is disconnected.
    \item Case (b): $Z_\e^r$ are Lipschitz domains for both phases $r=f,s$.
    In particular, both sets are connected.\footnote{Please note that this setup is not possible for $d=2$.}
\end{itemize}


\begin{figure}
    \centering
    \includegraphics[width=\linewidth]{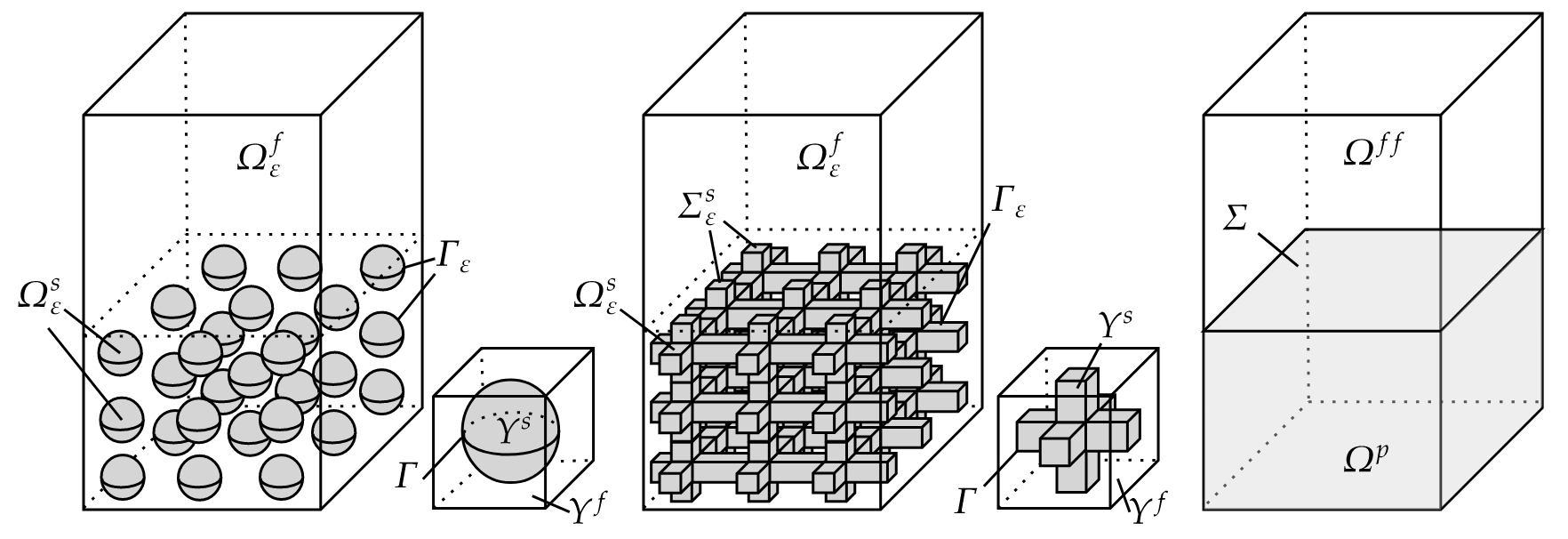}
    \caption{\tf{Example geometry $\Omega$ for the considered problems. Left: the disconnected case with spherical pores and corresponding unit cell \menew{(Case (a))}. Middle: domain for the connected pore structure and unit cell \menew{(Case (b))}. Right: domain in the homogenized model.}}
    \label{fig:geometry}
\end{figure}

In both cases, the \me{unit normal vector} of $\Gamma$ pointing outwards of $Y^{s}$ will be denoted with $n_\Gamma=n_\Gamma(y)$, $y\in\Gamma$.
Now, for $\e>0$, we introduce the $\e Y$-periodic domains $\Omega_\e^{f}$, $\Omega_\e^{s}\subset\Omega^p$ and the interface $\Gamma_\e$ representing the fluid and solid parts of the porous domain and their internal boundary, respectively:
\begin{align*}
	\Omega_\e^{f}=\interior\left(\overline{\Omega^{ff}}\cup\left(\Omega^p\cap Z_\e^f\right)\right),\quad
    \Omega_\e^{s}=\Omega^p\cap Z_\e^s,\quad
	\Gamma_\e=\Omega^p\cap\left(\bigcup_{k\in\Z^d}\e(\overline{\Gamma}+k)\right).
\end{align*}
In Case (a), $\Omega_\e^f$ is connected and $\Omega_\e^s$ is disconnected; in Case (b) both phases are connected.
The \me{unit} normal vector of $\Gamma_\e$ pointing outwards $\Omega_\e^s$ is given by $n_\e=n_\e(x)$, $x\in\Gamma_\e$.
We also introduce the sets
\[
\Sigma_\e^r=\overline{\Omega_\e^r}\cap\Sigma,\quad (\partial\Omega)_\e^r=\me{\partial\Omega_\e^r}\cap\partial\Omega
\]
and note that in Case (a) $\Sigma_\e^s=(\partial\Omega)_\e^s=\emptyset$.
In the following, $\chi_\e^{r}\colon\Omega\to\{0,1\}$ and $\chi^{r}\colon Y\to\{0,1\}$ denote the characteristic functions corresponding to \me{$\Omega_\e^r$} and $Y^r$, respectively.

Now, regarding the mathematical model, let $\theta_\e^r=\theta_\e^r(t,x)$ ($r=f,s$) represent the temperature at time $t\in S$ at $x\in\Omega_\e^r$.
The standard linear heat equation models the heat dynamics in fluid and solid regions via
\begin{subequations}\label{problem1}
\begin{alignat}{2}
\rho^fc^f\partial_t\theta_\e^f-\dive(\kappa^f\nabla\theta_\e^f-\rho^fc^fv_\e\theta_\e^f)&=f_\e^f&\quad&\text{in}\ \ S\times\Omega_\e^f,\label{problem1:1}\\
\rho^sc^s\partial_t\theta_\e^s-\dive(\kappa^s_\e\nabla\theta_\e^s)&=f_\e^s&\quad&\text{in}\ \ S\times\Omega_\e^s.\label{problem1:2}
\end{alignat}
Here, $\rho^r$ denotes the mass density of phase $r$, $c^r$ the specific heat, $\kappa^f$, $\kappa_\e^s$ the heat conductivities, and $f_\e^r$ volume source densities.
Moreover, $v_\e$ denotes the fluid velocity which is assumed to be \menew{known} but has to satisfy the \me{no--slip condition $v_\e=0$} on $\Gamma_\e$, i.e., no inflow of fluid into the solid structure is possible.
In a general setting, the velocity $v_\e$ should ideally be given via the solution to a Navier--Stokes or Stokes system.
In our setting, we just assume the velocity to be given and to satisfy certain assumptions that we specify later.

At the fluid-solid interface $\Gamma_\e$, we assume continuity of fluxes (energy balance) and heat exchange via temperature difference (\menew{note that} $v_\e=0$ on $\Gamma_\e$)
\begin{alignat}{2}
-\kappa^f\nabla\theta_\e^f\cdot n_\e&=-\kappa_\e^s\nabla\theta_\e^s\cdot n_\e&\quad&\text{on}\ \ S\times\Gamma_\e,\label{problem1:3}\\
\kappa^f\nabla\theta_\e^f\cdot n_\e&=\alpha_\e(\theta_\e^f-\theta_\e^s)&\quad&\text{on}\ \ S\times\Gamma_\e.\label{problem1:4}
\end{alignat}
Here, $\alpha_\e$ denotes the heat exchange coefficient.
For small values of $\alpha_\e$ (relative to $|\Gamma_\e|^{-1}$), the thermal resistivity condition given by \cref{problem1:4} approximates thermal isolation (no heat exchange between the subdomains, i.e., $\kappa^f\nabla\theta_\e^f\cdot n_\e=0$).
Conversely, for large values of $\alpha_\e$, the condition approximates temperature continuity across the interface (perfect heat exchange, i.e., $\theta_\e^f=\theta_\e^s$).
The latter case, where temperature continuity is maintained, is often referred to as local thermal equilibrium (LTE) as discussed in \cite{W99}.
This assumption is reasonable for many applications, especially in the context of steady conduction problems \cite{RP05}.
For a comprehensive overview and critical evaluation of the LTE assumption and the corresponding thermal resistance approach, which is also known as local thermal non-equilibrium (NLTE) and is utilized in this work, we refer to \cite{Pati22} and \cite[Section 6.3]{RP05} and the references therein.
In general, the thermal resistance model is more versatile as it can be used to approximate thermal equilibrium for large values of $\alpha_\e$.

One specific application that motivates the use of this thermal resistivity model (we also refer to \cite{WBE22}), is the interaction between cooling fluids and grinding wheels.
In this scenario, thermal equilibrium is not typically expected due to (a) differing thermal properties between the fast-flowing cooling fluid (often oils) and the complex composition of the grinding wheel (a composite of polymers, metals, and diamond grains) and (b) the continuous influx of cooler fluid compared to the porous medium.
\menew{Naturally, our model is still an idealization of this complex real world process.
For example, we do not consider the workpiece and we are also neglecting the abrasive grains of the grinding wheel.
Moreover, chip formation and transport, which also play a role in the heat transport in grinding processes, are not considered.
This is because we are specifically interested in investigating the interplay of the porous bonding structure with the heat dynamics in this work.
However, please note, the model is not restricted to this specific application and the results transfer to other applications like geothermal engineering; see \cite{OT98,YV11} where similar models are considered.}

Finally, we pose homogeneous Neumann boundary conditions at the external boundaries \menew{($\partial\Omega)_\e^r$} and initial conditions ($r=f,s$):
\begin{alignat}{2}
-\kappa^r\nabla\theta_\e^r\cdot \nu&=0&\quad&\text{on}\ \ S\times(\partial\Omega)_\e^r,\label{problem1:5}\\
\theta_\e^r(0, \cdot)&=\theta_{\e,0}^r&\quad&\text{in}\ \ \Omega_\e^r.
\end{alignat}
Please note that in Case (a), condition \eqref{problem1:5} is vacuously true for the solid part as $(\partial\Omega)_\e^s=\emptyset$ and $(\partial\Omega)_\e^f=\partial\Omega$, but in Case (b) both conditions are needed.
The $\e$-dependent micro-model in its PDE form is then given by system \eqref{problem1}.
\end{subequations}

In the following, we collect the main assumptions we are placing on the coefficients and data regarding the problem. 
The precise definition of a solution is given in \cref{sec:main_results}.
In general, for a function $f$ defined on $\Omega_\e^f$ or $\Omega_\e^s$, $\tilde{f}$ denotes the zero-continuation to the whole of $\Omega$ or $\Omega^p$, respectively.
Also, the subscript $\#$ in function spaces is taken to indicate periodicity, e.g.,
\[
H^1_\#(Y):=\left\{u\in H^1_{loc}(\R^d) \ :\ u_{|Y}\in H^1(Y),\  \ u(x+e_i)=u(x)\ \text{for almost all}\ x\in \R^d,\ i=1,2,3\right\}.
\]
\menew{For the limit analysis, we rely on the concept of two-scale convergence, see, e.g.,~\cite{A92,LNW02} for an overview:
A sequence $v_\e\in L^2(S\times \Omega)$ is said to two-scale converge to a function $v\in L^2(S\times\Omega\times Y)$ for $\e\to0$ (notation $v_\e\twosc v$) if 
\begin{equation}\label{twoscale}
\lim_{\e\to0}\int_S\int_\Omega v_\e(t,x)\varphi(t,x,\nicefrac{x}{\e})\di{x}\di{t}=\int_S\int_\Omega\int_Yv(t,x,y)\varphi(t,x,y)\di{y}\di{x}\di{t}
\end{equation}
for all test functions $\varphi\in L^2(S\times\Omega;C_\#(Y))$.}

\paragraph{Assumptions on the data.}

\begin{itemize}
    \item[(A1)] The coefficients $\rho^r$, $c^r$, and  $\kappa^f$ are positive.
    Also, there are positive constants $\alpha$, $\kappa^s$ such that
    \begin{itemize}
        \item[(a)] \textbf{$\Omega_\e^s$ disconnected:} $\kappa_\e^s=\e^2\kappa^s$, $\alpha_\e=\e\alpha$
        \item[(b)] \textbf{$\Omega_\e^s$ connected:}  $\kappa_\e^s=\kappa^s$, $\alpha_\e=\e\alpha\chi_{\Gamma_\e\cap\Omega^p}+\alpha\chi_{\Sigma_\e^s}$
    \end{itemize}
    Please note that the $\e^2$-scaling in Case (a) is standard for this particular type of the disconnected solid geometry, cf.~\cite{ADH90,EB14,GNP22,HJ91} for comparable situations.
    The heat exchange coefficient $\alpha$ is scaled with $\e$ to counteract \menew{the blow-up of the size of the interface, i.e., $\lim_{\e\to0}|\Gamma_\e|=\infty$, via $\e|\Gamma_\e|=|\Omega^p||\Gamma|$}; as is common with this kind of interface terms \cite{HJ91,N96,PS08}.
    In the connected case, the additional interface $\Sigma_\e^s$ does not blow-up \menew{($\lim_{\e\to0}|\Sigma_\e^s|=|\Sigma||\Sigma^s|$)} and for that reason is not scaled. 
    \item[(A2)] The volume source densities $(f_\e^f,f_\e^s)\in L^2(S\times\Omega_\e^f)\times L^2(S\times\Omega_\e^s)$ satisfy
    \[
    C_f:=\sup_{\e>0}\left(\|f_\e^f\|_{L^2(S\times\Omega_\e^f)}^2+\|f_\e^s\|^2_{L^2(S\times\Omega_\e^s)}\right)<\infty.
    \]
    \item[(A3)] The initial conditions $(\theta_{\e,0}^f,\theta_{\e,0}^s)\in L^2(\Omega_\e^f)\times L^2(\Omega_\e^s)$ satisfy 
    \[
    C_0:=\sup_{\e>0}\left(\|\theta_{\e,0}^f\|_{L^2(\Omega_\e^f)}^2+\|\theta_{\e,0}^s\|^2_{L^2(\Omega_\e^s)}\right)<\infty.
    \]
    \item[(A4)] The velocity $v_\e\in L^\infty(S\times\Omega_\e^f)^d$ satisfies
    \[
    C_v:=\sup_{\e>0}\|v_\e\|_{L^\infty(S\times\Omega_\e^f)}<\infty.
    \]
    Also, we assume the fluid to be incompressible\menew{, i.e., $\dive v_\e=0$,} as well as \me{$v_\e=0$} on $\Gamma_\e$.
    \item[(A5)] There are limit functions $f^f\in L^2(S\times\Omega)$ as well as $\theta_0^f\in L^2(S\times\Omega)$ such that
    \[
    \tilde{f}_\e^f\to f^f,\quad \tilde{\theta}_\e^f\to \theta_0^f \quad\text{in}\ L^2(S\times\Omega) \tfnew{\text{, for } \varepsilon \to 0.}
    \]
    \item[(A6)] 
    \begin{itemize}
        \item[(a)] \textbf{$\Omega_\e^s$ disconnected.} There are functions $f^s\in L^2(S\times\Omega^p\times Y)$ as well as $\theta_0^s\in L^2(\Omega^p\times Y)$ such that
        \[
        f_\e^s\twosc\chi^sf^s,\quad \theta_{\e,0}^s\twosc\chi^s\theta_0^s, \quad\tfnew{\text{ for } \varepsilon \to 0.}
        \]
        \item[(b)] \textbf{$\Omega_\e^s$ connected.}  There are functions $f^s\in L^2(S\times\Omega^p)$ as well as $\theta_0^s\in L^2(\Omega^p)$ such that
        \[
        f_\e^s\to f^s,\quad \theta_{\e,0}^s\to \theta_0^s\quad \text{in}\ L^2(S\times\Omega) \tfnew{\text{, for } \varepsilon \to 0.}
        \]
    \end{itemize}
    \item[(A7)] There are functions $v\in L^2(S\times\Omega^{ff})$, $v_D\in L^2(S\times\Omega^p;H_\#^1(Y))$ where $v_D(t,\cdot,y)\in H^1(\Omega^p)$ for almost all $(t,y)\in S\times Y$ such that
    \[
    v_\e\to v \quad\text{in}\ L^2(S\times\Omega^{ff}),\quad{\tilde{v}_\e}{}_{|\Omega^p}\twosc\chi^fv_D, \quad\tfnew{\text{ for } \varepsilon \to 0.}
    \]
    We also assume that \me{$v_D=0$} almost everywhere on $S\times\Omega\times\Gamma$ and incompressibility, i.e., $\dive_yv_D=0$ almost everywhere in $S\times\Omega\times Y^f$.
    \me{Finally, we impose $\int_{Y^f}v_D\di{y}\cdot n_\Sigma=v\cdot n_\Sigma$.}
\end{itemize}
From this list, Assumptions (A1)--(A4) are needed to ensure the existence of unique solutions with certain $\e$-uniform estimates and Assumptions (A5)-(A7) for the limit process $\e\to0$.
    The letters (a) and (b) indicate the specific geometric setup.
\me{Please note that assumptions similar to $(A4)$ and $(A7)$ are often posed in homogenization scenarios including convection/advection, e.g., \cite{AllaireHabibi,EM17,GNP22}.
More concretely, it is usually assumed that the velocity can be represented as $v_\e(x)=v(x,\nicefrac{x}{\e})$ for some continuous function $v$ which is $Y$-periodic in its second argument.}
This setup is also used in \emph{fast-drift} problems \cite{Allaire10,AMP10}.
\menew{The continuity of normal velocities, the last condition in Assumption (A7), is physically motivated by the principle of mass balance.}
Although this condition is not necessary from a mathematical point of view, \menew{it is consistent} with the usual interface conditions for fluid systems (e.g., Joseph--Beavers and its generalizations).


\section{Main results}\label{sec:main_results}
We fix our solution space
\[
W_\e=\left\{(u^f,u^s)\in L^2(S; H^1(\Omega_\e^f)\times H^1(\Omega_\e^s))\ : \ \partial_t(u^f,u^s)\in L^2(S;L^2(\Omega_\e^f\times \Omega_\e^s))\right\}
\]
and call $(\theta_\e^f,\theta_\e^s)\in W_\e$ a solution to the $\e$-dependent problem given by system \eqref{problem1} if it satisfies $(\theta_\e^f(0),\theta_\e^s(0))=(\theta_{\e,0}^f,\theta_{\e,0}^s)$ as well as
\begin{multline}\label{eps_weak_form}
(\rho^fc^f\partial_t\theta_\e^f,\varphi^f)_{\Omega_\e^f}+(\kappa^f\nabla\theta^f_\e-\rho^fc^fv_\e\theta^f_\e,\nabla\varphi^f)_{\Omega_\e^f}
+(\rho^sc^s\partial_t\theta_\e^s,\varphi^s)_{\Omega_\e^s}+\kappa_\e^s(\nabla\theta^s_\e,\nabla\varphi^s)_{\Omega_\e^s}\\
+\alpha_\e(\theta_\e^f-\theta_\e^s,\varphi^f-\varphi^s)_{\Gamma_\e}=(f_\e^f,\varphi^f)_{\Omega_\e^f}+(f_\e^s,\varphi^s)_{\Omega_\e^s}
\end{multline}
for all test functions $(\varphi^f,\varphi^s)\in H^1(\Omega_\e^f)\times H^1(\Omega_\e^s)$ and almost all $t\in S$.

\begin{theorem}[Existence and estimates]\label{ex_est}
    Let Assumptions (A1)--(A4) be satisfied.
    Then, there is a unique $(\theta_\e^f,\theta_\e^s)\in W_\e$ satisfying $(\theta_\e^f(0),\theta_\e^s(0))=(\theta_{\e,0}^f,\theta_{\e,0}^s)$ and \cref{eps_weak_form} for all test functions $(\varphi^f,\varphi^s)\in H^1(\Omega_\e^f)\times H^1(\Omega_\e^s)$ and almost all $t\in S$.
    In addition, \tfnew{there exists a constant $C>0$ independent of $\varepsilon$ such that}
    \begin{equation}\label{esp_estimate}
    \|\theta_\e^f\|^2_{L^\infty(S;L^2(\Omega_\e^f))}+\|\theta_\e^s\|^2_{L^\infty(S;L^2(\Omega_\e^s))}
    +\|\nabla\theta_\e^f\|^2_{L^2(S\times \Omega_\e^f)}+\e^\gamma\|\nabla\theta_\e^s\|^2_{L^2(S\times \Omega_\e^s)}
    +\e\|\theta_\e^f-\theta_\e^s\|^2_{L^2(S\times\Gamma_\e)}\leq C
    \end{equation}  
    where $\gamma=2$ in Case (a) and $\gamma=0$ in Case (b).
\end{theorem}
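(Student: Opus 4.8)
The plan is to regard the weak formulation \eqref{eps_weak_form} as a linear parabolic Cauchy problem on the Gelfand triple $V_\e:=H^1(\Omega_\e^f)\times H^1(\Omega_\e^s)\hookrightarrow H_\e\hookrightarrow V_\e^*$, where $H_\e:=L^2(\Omega_\e^f)\times L^2(\Omega_\e^s)$ carries the scalar product $\rho^fc^f(\cdot,\cdot)_{\Omega_\e^f}+\rho^sc^s(\cdot,\cdot)_{\Omega_\e^s}$ (equivalent to the canonical one by (A1)). Gathering the spatial terms of \eqref{eps_weak_form} into a bilinear form $a_\e$ on $V_\e$, I would first check that $a_\e$ is bounded on $V_\e$ --- the only nontrivial contributions being the interface term, handled by the (for fixed $\e$ finite) trace estimate $\|u^f-u^s\|_{L^2(\Gamma_\e)}\le C_\e\|(u^f,u^s)\|_{V_\e}$, and the convection term, handled by $\|v_\e\|_{L^\infty}<\infty$ from (A4) --- and that it satisfies a Gårding inequality: using Young's inequality in the form $\rho^fc^f(v_\e u^f,\nabla u^f)_{\Omega_\e^f}\le\tfrac{\kappa^f}{2}\|\nabla u^f\|_{L^2(\Omega_\e^f)}^2+C\|u^f\|_{L^2(\Omega_\e^f)}^2$ and noting that the conductive and interface terms are nonnegative, one obtains $a_\e(u,u)\ge c_\e\|u\|_{V_\e}^2-\lambda_\e\|u\|_{H_\e}^2$ with $c_\e>0$, $\lambda_\e\ge0$. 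Since the source functional lies in $L^2(S;H_\e)\subset L^2(S;V_\e^*)$ by (A2) and the initial value in $H_\e$ by (A3), the standard theory for linear parabolic equations (Galerkin approximation, or the Lions--Lax--Milgram theorem) yields a unique solution in $L^2(S;V_\e)\cap H^1(S;V_\e^*)\hookrightarrow C(\overline S;H_\e)$, for which the initial condition is meaningful; testing the Galerkin equations with the time derivative of the approximation then places the solution in $W_\e$ (this last step additionally exploits the energy estimate below, and, strictly speaking, slightly more regular initial data). Uniqueness follows from linearity together with that estimate.

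For the estimate \eqref{esp_estimate}, I would test \eqref{eps_weak_form} with $(\varphi^f,\varphi^s)=(\theta_\e^f(t),\theta_\e^s(t))$. The time-derivative terms combine into $\tfrac12\ddt\big(\rho^fc^f\|\theta_\e^f(t)\|_{L^2(\Omega_\e^f)}^2+\rho^sc^s\|\theta_\e^s(t)\|_{L^2(\Omega_\e^s)}^2\big)$, leaving the nonnegative conductive terms $\kappa^f\|\nabla\theta_\e^f\|_{L^2(\Omega_\e^f)}^2+\kappa_\e^s\|\nabla\theta_\e^s\|_{L^2(\Omega_\e^s)}^2$, the nonnegative interface term $\alpha_\e\|\theta_\e^f-\theta_\e^s\|_{L^2(\Gamma_\e)}^2$, the convection term, and the source terms. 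The convection term is bounded by Cauchy--Schwarz and Young, $\rho^fc^f|(v_\e\theta_\e^f,\nabla\theta_\e^f)_{\Omega_\e^f}|\le\tfrac{\kappa^f}{2}\|\nabla\theta_\e^f\|_{L^2(\Omega_\e^f)}^2+\tfrac{(\rho^fc^f)^2C_v^2}{2\kappa^f}\|\theta_\e^f\|_{L^2(\Omega_\e^f)}^2$ using (A4), its gradient part being absorbed into $\kappa^f\|\nabla\theta_\e^f\|^2$. (If one additionally knows $v_\e\cdot\nu=0$ on $\partial\Omega$, this term even vanishes, by incompressibility and $v_\e=0$ on $\Gamma_\e$.) The sources are handled by $|(f_\e^r,\theta_\e^r)_{\Omega_\e^r}|\le\tfrac12\|f_\e^r\|_{L^2(\Omega_\e^r)}^2+\tfrac12\|\theta_\e^r\|_{L^2(\Omega_\e^r)}^2$. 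Integrating over $(0,t)$, controlling the data by $C_0$, $C_f$ from (A2)--(A3), and invoking Grönwall's lemma gives $\|\theta_\e^f\|_{L^\infty(S;L^2(\Omega_\e^f))}^2+\|\theta_\e^s\|_{L^\infty(S;L^2(\Omega_\e^s))}^2\le C$ with $C$ depending only on $\rho^rc^r$, $\kappa^f$, $C_v$, $C_f$, $C_0$, $T$, hence independent of $\e$. Re-inserting this bound into the time-integrated identity yields $\int_S\big(\tfrac{\kappa^f}{2}\|\nabla\theta_\e^f\|_{L^2(\Omega_\e^f)}^2+\kappa_\e^s\|\nabla\theta_\e^s\|_{L^2(\Omega_\e^s)}^2+\alpha_\e\|\theta_\e^f-\theta_\e^s\|_{L^2(\Gamma_\e)}^2\big)\di{t}\le C$; recalling from (A1) that $\kappa_\e^s=\e^\gamma\kappa^s$ with $\gamma=2$ in Case (a) and $\gamma=0$ in Case (b), and that $\alpha_\e\ge\e\alpha$ on $\Gamma_\e$ in both cases, this is exactly \eqref{esp_estimate}.

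This is the classical energy method for linear parabolic systems, so I do not expect a genuine obstacle; the delicate point is the $\e$-uniformity of the constant in \eqref{esp_estimate}. This hinges entirely on the scalings fixed in (A1): the degenerate solid conductivity $\kappa_\e^s=\e^2\kappa^s$ and the interface coefficient $\alpha_\e=\e\alpha$ in Case (a) are precisely compensated by the $\e$-weights appearing in \eqref{esp_estimate}, so that they never enter the Grönwall constant, which only sees $\rho^rc^r$, $\kappa^f$, $C_v$, $C_f$, $C_0$, $T$; and the convection term --- which a priori could destroy uniformity through $\|v_\e\|_{L^\infty}$ --- stays under control thanks to the $\e$-uniform bound $C_v$ from (A4) (or vanishes outright via incompressibility and the no-slip condition). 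For the existence part one further notes that, for fixed $\e>0$, the $\e$-degeneracy of the trace constant in the boundedness of $a_\e$ and of the coercivity constant $c_\e$ is immaterial, since no uniformity in $\e$ is claimed there.
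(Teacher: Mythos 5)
Your proposal is correct and follows essentially the same route as the paper: existence and uniqueness from the standard theory for linear parabolic advection--diffusion systems, followed by the energy estimate obtained by testing with $(\theta_\e^f,\theta_\e^s)$, absorbing the convection term via Young's inequality using the $\e$-uniform bound $C_v$ from (A4), integrating in time, and applying Gr\"onwall's lemma, with the $\e$-scalings of $\kappa_\e^s$ and $\alpha_\e$ from (A1) exactly matching the weights $\e^\gamma$ and $\e$ in \eqref{esp_estimate}. The extra functional-analytic scaffolding (Gelfand triple, G\r{a}rding inequality) and your honest caveat about the regularity needed for $\partial_t\theta_\e^r\in L^2(S\times\Omega_\e^r)$ go slightly beyond what the paper spells out, but do not change the argument.
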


\begin{proof}
    For each $\e>0$, \cref{eps_weak_form} is the weak form of a linear advection-diffusion parabolic system which admits a unique solution $(\theta_\e^f,\theta_\e^s)\in W_\e$ under the given assumptions.
    The estimates are the result of energy estimates:
    Taking $(\theta_\e^f,\theta_\e^s)$ as a test function, we get
    \begin{multline*}
    \rho^fc^f\ddt\|\theta_\e^f\|^2_{L^2(\Omega_\e^f)}+\kappa^f\|\nabla\theta^f_\e\|^2_{L^2(\Omega_\e^f)}
    +\rho^sc^s\ddt\|\theta_\e^s\|^2_{L^2(\Omega_\e^s)}+\e^\gamma \kappa^s\|\nabla\theta^s_\e\|^2_{L^2(\Omega_\e^s)}
    +\e\alpha\|\theta_\e^f-\theta_\e^s\|^2_{L^2(\Gamma_\e)}\\
    \leq(f_\e^f,\theta_\e^f)_{\Omega_\e^f}+(f_\e^s,\theta_\e^s)_{\Omega_\e^s}
    +\rho^fc^f(v_\e\theta^f_\e,\nabla\theta_\e^f)_{\Omega_\e^f}.
    \end{multline*}
    Integrating over $(0,t)$ and using $\menew{\|v_\e\|_{L^\infty(S\times\Omega)}}\leq C_v$ (Assumption (A4)) yields
    \begin{multline*}
    \rho^fc^f\|\theta_\e^f(t)\|^2_{L^2(\Omega_\e^f)}+\rho^sc^s\|\theta_\e^s(t)\|^2_{L^2(\Omega_\e^s)}\\
    +\frac{\kappa^f}{2}\int_0^t\|\nabla\theta^f_\e\|^2_{L^2(\Omega_\e^f)}\di{\tau}+\e^\gamma\kappa^s\int_0^t\|\nabla\theta^s_\e\|^2_{L^2(\Omega_\e^s)}\di{\tau}
    +\e\alpha\int_0^t\|\theta_\e^f-\theta_\e^s\|^2_{L^2(\Gamma_\e)}\di{\tau}\\
    \leq\int_0^t(f_\e^f,\theta_\e^f)_{\Omega_\e^f}\di{\tau}+\int_0^t(f_\e^s,\theta_\e^s)_{\Omega_\e^s}\di{\tau}
    +\frac{(C_v\rho^fc^f)^2}{2\kappa^f}\int_0^t\|\theta^f_\e\|^2\di{\tau}
    +\rho^fc^f \|\theta_{\e,0}^f\|^2_{L^2(\Omega_\e^f)}+\rho^sc^s\|\theta_{\e,0}^s\|^2_{L^2(\Omega_\e^s)}.
    \end{multline*}
    Consequently, using Gronwall's inequality, we arrive at estimate \eqref{esp_estimate} where $C>0$ is independent of $\e$.
\end{proof}

\begin{theorem}[Homogenization (Case a)]\label{hom:a}
    Let Assumptions (A1)--(A7) (in their (a)-Variants) be satisfied.
    Then, $\theta_\e^f\to\Theta$ in $L^2(S\times\Omega)$ and $\theta_\e^s\twosc\theta^s$ in $L^2(S\times\Omega^p\times Y^s)$ for $\e\to0$, where
    \[
    (\Theta,\theta^s)\in L^2(S;H^1(\Omega))\times L^2(S\times\Omega;H^1(Y^s))\ \text{s.t.}\ 
    (\partial_t\Theta,\partial_t\theta^s)\in L^2(S\times\Omega)\times L^2(S\times\Omega\times Y^s)
    \]
    is the unique solution of the homogenized fluid-heat system \me{(we set $\theta^{ff}=\Theta_{|\Omega^{ff}}$ and $\theta^{f}=\Theta_{|\Omega^{p}}$)}
    \begin{subequations}\label{homa}
    \begin{alignat}{2}
    \rho^fc^f\partial_t\theta^{ff}-\dive(\kappa^f\nabla\theta^{ff}-\rho^fc^fv\theta^{ff})&=f^f&\quad&\text{in}\ \ S\times\Omega^{ff},\label{thm_d:hom1}\\
    |Y^f|\rho^fc^f\partial_t\theta^f-\dive(\kappa_h^f\nabla\theta^f-\rho^fc^f\bar{v}_D\theta^f)+\alpha\int_\Gamma(\theta^{f}-\theta^s)\di{\sigma}&=|Y^f|f^f&\quad&\text{in}\ \ S\times\Omega^p,\label{thm_d:hom1b}\\
    \me{(\kappa^f\nabla\theta^{ff}-\kappa_h^f\nabla\theta^f)\cdot n_\Sigma}&=0&\quad&\text{on}\ \ S\times\Sigma,\label{thm_d:hom3c}\\
    \me{\theta^f}&\me{=\theta^{ff}}&\quad&\text{\me{on}}\ \ \me{S\times\Sigma},\label{thm_d:hom3c2}\\
    -\kappa^f\nabla\theta^{ff}\cdot\nu&=0&\quad&\text{on}\ \ S\times(\partial\Omega\cap\partial\Omega^{ff}),\label{thm_d:hom3}\\
    -\kappa_h^f\nabla\theta^f\cdot\nu&=0&\quad&\text{on}\ \ S\times(\partial\Omega\cap\partial\Omega^{p}),\label{thm_d:hom3b}\\
    \tf{\Theta(0, \cdot)}&=\theta^f_0&\quad&\text{in}\ \ \Omega.\label{thm_d:hom5}
    \end{alignat}
    \me{It is} coupled with the microscale solid-heat problem
    \begin{alignat}{2}
    \rho^s c^s \partial_t\theta^s-\kappa^s\Delta_y\theta^s&=f^s&\quad&\text{in}\ \ S\times\Omega^{p}\times Y^s,\label{thm_d:hom2}\\
    -\kappa^s\nabla\theta^s\cdot n_\Gamma&=\alpha(\theta^s-\theta^f)&\quad&\text{on}\ \ S\times\Omega^{p}\times\Gamma,\label{thm_d:hom4}\\
    \tf{\theta^s(0, \cdot)}&=\theta_0^{s}&\quad&\text{in}\ \ \Omega^p\times Y_s.\label{thm_d:hom6}
    \end{alignat}    
    The velocity field $\bar{v}_D$ and the heat conductivity matrix $\kappa_h^f$ are given by
    \[
    \bar{v}_D(t,x)=\int_{Y^f}v_D(t,x,y)\di{y},\quad(\kappa_h^f)_{ij}=\me{\kappa^f}\int_{Y^f}(\nabla_y\xi_i^f+e_i)\cdot e_j\di{y}.
    \]
    Here, the $\xi_i^f\in H^1_\#(Y^f)$ ($i=1,2,3$) are the unique, zero-average solution to the cell problems
    \begin{alignat}{2}
    -\Delta_y\xi_i^f&=0&\quad&\text{in}\ \ Y^f,\label{thm_d:hom7}\\
    -\nabla_y\xi_i^f\cdot n_\Gamma&=e_i\cdot n_\Gamma&\quad&\text{on}\ \ \Gamma,\label{thm_d:hom8}\\
    y&\mapsto\xi_i^f(y)&\quad&\text{is $Y$-periodic}\label{thm_d:hom9}.
    \end{alignat}
    \end{subequations}
\end{theorem}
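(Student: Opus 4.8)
The plan is to run a two-scale convergence argument driven by the $\e$-uniform bounds of \cref{ex_est}. First I would extract limits. Since $\Omega_\e^f$ is the connected phase, apply the $\e$-uniform extension operator of \cref{ext_op} to get $\bar\theta_\e^f\in L^2(S;H^1(\Omega))$ bounded by \eqref{esp_estimate}. Testing \eqref{eps_weak_form} with $H^1$-functions and controlling the interface contribution via the scaled trace inequality $\|\varphi\|_{L^2(\Gamma_\e)}\le C\e^{-1/2}\|\varphi\|_{H^1(\Omega_\e^f)}$ together with the bound $\e\|\theta_\e^f-\theta_\e^s\|^2_{L^2(S\times\Gamma_\e)}\le C$ shows that $\partial_t\bar\theta_\e^f$ is bounded in $L^2(S;(H^1(\Omega))')$; Aubin--Lions then gives $\bar\theta_\e^f\to\Theta$ strongly in $L^2(S\times\Omega)$ with $\Theta\in L^2(S;H^1(\Omega))$, hence $\widetilde{\theta_\e^f}\twosc\chi^f\Theta$ and $\widetilde{\nabla\theta_\e^f}\twosc\chi^f(\nabla_x\Theta+\nabla_y\theta^{f,1})$ for a corrector $\theta^{f,1}\in L^2(S\times\Omega;H^1_\#(Y^f)/\R)$. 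For the solid phase, which cannot be extended, I would use $\|\theta_\e^s\|_{L^2}+\e\|\nabla\theta_\e^s\|_{L^2}\le C$ (the case $\gamma=2$) and the standard distributed-microstructure compactness to obtain $\widetilde{\theta_\e^s}\twosc\chi^s\theta^s$ and $\e\widetilde{\nabla\theta_\e^s}\twosc\chi^s\nabla_y\theta^s$ with $\theta^s\in L^2(S\times\Omega^p;H^1(Y^s))$.

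Next I would pass to the limit in \eqref{eps_weak_form} with oscillating test functions $\varphi^f_\e(t,x)=\varphi_0(t,x)+\e\varphi_1(t,x,\tfrac{x}{\e})$ and $\varphi^s_\e(t,x)=\psi(t,x,\tfrac{x}{\e})$, with smooth, $Y$-periodic $\varphi_1,\psi$. The fluid diffusion term two-scale converges to $\kappa^f\iint\int_{Y^f}(\nabla_x\Theta+\nabla_y\theta^{f,1})\cdot(\nabla_x\varphi_0+\nabla_y\varphi_1)$; taking $\varphi_0=0$ yields the cell problems \eqref{thm_d:hom7}--\eqref{thm_d:hom9}, the representation $\theta^{f,1}=\sum_i\partial_{x_i}\Theta\,\xi_i^f$, and the matrix $\kappa_h^f$ (on $\Omega^{ff}$ the cell is entirely fluid, so $\nabla_y\theta^{f,1}=0$ there and one simply recovers $\kappa^f\nabla\theta^{ff}$). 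The rescaled solid diffusion term $\kappa^s(\e\widetilde{\nabla\theta_\e^s},\e\nabla_x\psi+\nabla_y\psi)$ converges to $\kappa^s\iint\int_{Y^s}\nabla_y\theta^s\cdot\nabla_y\psi$. The interface term $\e\alpha(\theta_\e^f-\theta_\e^s,\varphi^f_\e-\varphi^s_\e)_{\Gamma_\e}$ is handled with the surface version of two-scale convergence and $\e|\Gamma_\e|\to|\Omega^p||\Gamma|$: the extended fluid trace converges to the $y$-independent limit $\Theta$, the solid trace to $\theta^s|_{y\in\Gamma}$, so the term tends to $\alpha\iiint_\Gamma(\Theta-\theta^s)(\varphi_0-\psi)$, producing the exchange term in \eqref{thm_d:hom1b} and, combined with the solid diffusion term, the Robin condition \eqref{thm_d:hom4} and equation \eqref{thm_d:hom2}. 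For the convection term I would split $\Omega_\e^f=\Omega^{ff}\cup(\Omega^p\cap Z_\e^f)$: on $\Omega^{ff}$ use $v_\e\to v$ strongly (A7) and $\theta_\e^f\to\Theta$ strongly; on $\Omega^p$ use $\widetilde v_\e\twosc\chi^fv_D$ against the strong limit $\Theta$, so $\chi_\e^fv_\e\theta_\e^f\twosc\chi^fv_D\Theta$; testing against $\nabla_x\varphi_0+\nabla_y\varphi_1$, the $\nabla_y\varphi_1$ part vanishes after integrating by parts in $Y^f$ using $\dive_yv_D=0$, $v_D\cdot n_\Gamma=0$ and periodicity, leaving $\rho^fc^f\bar v_D\Theta\cdot\nabla_x\varphi_0$. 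The time-derivative terms pass to the limit via $\widetilde{\partial_t\theta_\e^r}\twosc\chi^r\partial_t(\cdot)$ and the sources via (A5), (A6a). The transmission conditions on $\Sigma$ come essentially for free: \eqref{thm_d:hom3c2} since $\Theta\in L^2(S;H^1(\Omega))$ has a single trace on $\Sigma$, and \eqref{thm_d:hom3c} from collecting the boundary contributions of the limiting weak form, the compatibility $\int_{Y^f}v_D\di y\cdot n_\Sigma=v\cdot n_\Sigma$ of (A7) ensuring the convective fluxes match across $\Sigma$.

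Collecting these identities and localizing $\varphi_0,\varphi_1,\psi$ gives exactly the weak form of \eqref{homa}. It then remains to prove that \eqref{homa} has at most one solution in the stated class: by linearity it suffices to show zero data forces $\Theta\equiv0$, $\theta^s\equiv0$. Testing the $\Theta$-equations with $\Theta$ and the $\theta^s$-equation with $\theta^s$, integrating over $Y^s$ and over $\Omega^p$ and adding, the two coupling terms combine to $\alpha\iint_\Gamma(\theta^f-\theta^s)^2\ge0$, the convective contribution is skew-symmetric because $\dive\bar v_D=0$ and the normal velocities are compatible (so no inflow at $\Sigma$ and $\partial\Omega$ is used consistently), and Grönwall closes the argument. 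Uniqueness upgrades the subsequential convergence to convergence of the whole family and yields the stated time regularity $\partial_t\Theta\in L^2(S\times\Omega)$, $\partial_t\theta^s\in L^2(S\times\Omega\times Y^s)$ (these can also be read off a posteriori from \eqref{thm_d:hom1b} and \eqref{thm_d:hom2}).

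The main obstacle I anticipate is twofold. First, the convective term with the fast-drift velocity $v_D$ must be treated with the porous region and the free-fluid layer kept separate, using all the structural properties in (A7)---$\dive_yv_D=0$, $v_D=0$ on $\Gamma$, and the normal-trace compatibility on $\Sigma$---so that the induced interface flux is precisely the one compatible with \eqref{thm_d:hom3c} and \eqref{thm_d:hom3c2}. Second, the limit passage in the oscillating-interface term $\e\alpha(\theta_\e^f-\theta_\e^s,\cdot)_{\Gamma_\e}$ requires the surface two-scale convergence with the exact $\e$-scaling of $|\Gamma_\e|$ and a careful justification that the trace of the \emph{extended} fluid temperature and that of the \emph{non-extendable} solid temperature two-scale converge to $\Theta|_\Gamma$ and $\theta^s|_\Gamma$ respectively; this, together with the uniform $\partial_t$-estimate underlying the strong $L^2$-convergence of $\theta_\e^f$ (which itself hinges on the scaled trace inequality applied to this same interface term), is the technical heart of the proof.
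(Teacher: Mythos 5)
Your proposal is correct and follows essentially the same route as the paper: two-scale compactness from the $\e$-uniform energy estimates, the uniform extension operator of \cref{ext_op} to obtain strong $L^2$-convergence of the fluid temperature (and hence continuity across $\Sigma$), oscillating test functions compatible across $\Sigma$, the surface two-scale limit for the $\e$-scaled $\Gamma_\e$ term, the strong-times-two-scale product rule for the convective term using (A7), decoupling via the cell problems to get $\kappa_h^f$, and uniqueness by energy estimates plus Gr\"onwall. The only cosmetic difference is that you invoke Aubin--Lions with a dual-space bound on $\partial_t$ where the paper argues via weak $H^1$-convergence and compact embedding, which amounts to the same compactness step.
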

\begin{proof}
\menew{The general strategy of the homogenization process is outlined in \cref{rem:strategy} of \cref{sec:homogenization} and the individual steps are presented in detail in \cref{sec:disc}.
More specifically,
\begin{itemize}
    \item in \cref{lem:limits}, the existence of the limit functions, at least for subsequences, is shown,
    \item a limit analysis $\e\to0$ leads to the coupled system given by \cref{d:hom_var1,d:hom_var2,d:hom_var3,d:hom_var4},
    \item and the subsequent decoupling process results in system \ref{homa}.
\end{itemize}}

For uniqueness, let \tfnew{$\{\Theta_j,\theta_j^s\}_{j=1,2}$} be two sets of solution, whose differences we denote by $(\overline{\Theta},\overline{\theta}^s)$.
Standard energy estimations for the differences lead to
\begin{multline*}
\ddt\|\overline{\Theta}\|^2_{L^2(\Omega)}
+\|\nabla\overline{\Theta}\|^2_{L^2(\Omega)}
+|\Gamma|\|\overline{\theta}^f\|_{L^2(\Omega^p)}^2\\
\leq C\left(\|\widehat{v}\|_{L^\infty(S\times\Omega)}\|\nabla\overline{\Theta}\|_{L^2(\Omega)}\|\overline{\Theta}\|_{L^2(\Omega)}+\|\overline{\theta}^s\|_{L^2(\Omega^p\times\Gamma)}\|\overline{\Theta}\|_{L^2(\Omega)}\right)
\end{multline*}
for the fluid temperature and
\begin{equation*}
    \ddt\|\overline{\theta}^s\|^2_{L^2(\Omega^p\times Y^s)}+2\kappa^s\|\nabla\overline{\theta}^s\|^2_{L^2(\Omega^p\times Y^s)}+\alpha\|\overline{\theta}^s\|_{L^2(\Omega^p\times\Gamma)}^2\leq \alpha\sqrt{\Gamma}\|\overline{\theta}^s\|_{L^2(\Omega^p\times\Gamma)}\|\overline{\Theta}\|_{L^2(\Omega)}
\end{equation*}
for the solid temperature.
With the application of Young's and Gronwall's inequalities, it follows that $\overline{\Theta}\equiv0$ and $\overline{\theta}^s\equiv0$ almost everywhere in $S\times\Omega$ and $S\times\Omega\times Y^s$, respectively.
The uniqueness also implies that the whole \tfnew{sequence converges}. 
\end{proof}

\begin{remark}
 Since $\Theta$ is continuous across $\Sigma$ (due to $\Theta\in H^1(\Omega)$), the fluid heat system can also be expressed \menew{as a single PDE} by considering piece-wise constant coefficients, e.g.,  $\widetilde{\rho c}=\chi_{\Omega^{ff}}\rho^fc^f+\chi_{\Omega^{p}}|Y^f|\rho^fc^f$ \menew{and $\widetilde{\alpha}=\chi_{\Omega^{p}}|Y^f|\alpha$}.
This leads to
    \begin{alignat*}{2}
    \widetilde{\rho c}\partial_t\Theta-\dive(\tilde{\kappa}\nabla\Theta-\widetilde{\rho cv}\Theta)+\widetilde{\alpha}\int_\Gamma(\Theta-\theta^s)\di{\sigma}&=\tilde{f^f}&\quad&\text{in}\ \ S\times\Omega,\\
    -\widetilde{\kappa}\nabla\Theta\cdot\nu&=0&\quad&\text{on}\ \ S\times\partial\Omega,\\
    \Theta(0, \cdot)&=\theta^f_0&\quad&\text{in}\ \ \Omega.
    \end{alignat*}
\end{remark}

The linear problem, as described by \cref{thm_d:hom1,thm_d:hom1b,thm_d:hom2,thm_d:hom3,thm_d:hom3b,thm_d:hom3c,thm_d:hom3c2,thm_d:hom4,thm_d:hom5,thm_d:hom6}, represents a classical example of a coupled two-scale model for the porous part $\Omega^p$, where there is a heat exchange between solid and fluid \me{compartments through} the volume source density $\alpha\int_\Gamma(\theta^f-\theta^s)\di{\sigma}$ and the corresponding boundary condition \eqref{thm_d:hom4} for $\theta^s$.
Please note that energy conservation (outside the volume sources $f^f$ and $f^s$) still holds as these contributions balance \me{each other}.
Also, while $\theta^f=\theta^{ff}$ at $\Sigma$, the heat coefficient $\kappa_h$ is generally not continuous across $\Sigma$ (see \cref{ch:stationary_profile}).
As a consequence, $\Theta$ is expected to exhibit \tf{a change in slope across the interface} as can also be seen in the simulations in \cref{sec:simulations}, see \cref{fig:temp_sationary}.

One possible interpretation of the solid inclusions is \me{to view them as} heat sinks or heat sources for the porous medium (depending on the prior history of the system): Due to imperfect heat transfer between fluid and solid parts, there is an expected delay of temperature equilibrium. 
When the porous system cools or heats up, the solid system acts as either a heat source or storage, thus causing a delay.

This idea can be made mathematically explicit by introducing a memory term accounting for the history of the system.
In the resulting model, the solid heat system is decoupled from the fluid system at the cost of two additional cell problems.

\begin{lemma}[Homogenization with memory term]\label{hom:b2}
The system \eqref{thm_d:hom1}--\eqref{thm_d:hom9} can alternatively be written as (plus initial, boundary, and interface conditions)
\begin{subequations}
\begin{alignat}{2}
\rho^fc^f\partial_t\theta^{ff}-\dive(\kappa^f\nabla\theta^{ff}-\rho^fc^fv\theta^{ff})&=f^{ff}&\quad&\text{in}\ \ S\times\Omega^{ff},\label{thm_d:2hom1}\\
\begin{split}
\rho^fc^f|Y^f|\partial_t\theta^f-\dive(\kappa_h^f\nabla\theta^f-\rho^fc^f\bar{v}_D\theta^f)\hspace{3.5cm}&\\
+\alpha|\Gamma|\theta^f-\int_0^t\theta^f(\tau)\psi(t-\tau)\di{\tau}&=|Y^f|f^f+\bar{\eta}\end{split}&\quad&\text{in}\ \ S\times\Omega^p,\label{thm_d:2hom2}
\end{alignat}
where
\[
\psi(t)=\int_\Gamma\partial_t\xi(t,y)\di{\sigma},\quad \bar{\eta}(t,x)=\int_\Gamma\eta(t,x,y)\di{\sigma}+|\Gamma|\theta_0^f(x).
\]
Here, $\xi\in L^2(S;H^1(Y^s))$ with $\partial_t\xi\in L^2(S\times Y^s)$ and $\eta\in L^2(S\times\Omega^p;H^1(Y^s))$ with $\partial_t\eta\in L^2(S\times\Omega^p\times Y^s)$ are the unique solutions of the cell problems

\begin{minipage}[t]{0.45\textwidth}
\begin{alignat*}{2}
\rho^sc^s\partial_t\xi-\kappa^s\Delta_y\xi&=0&\quad&\text{in}\ \ S\times Y^s,\\
-\kappa^s\nabla\xi\cdot n_\Gamma&=\alpha(\xi-1)&\quad&\text{on}\ \ S\times\Gamma,\\
\xi(0, \cdot)&=0&\quad&\text{in}\ \ Y^s,
\end{alignat*}
\end{minipage}\hspace{.5cm}
\begin{minipage}[t]{0.45\textwidth}
\begin{alignat*}{2}
\rho^sc^s\partial_t\eta-\kappa^s\Delta_y\eta&=f^s&\quad&\text{in}\ \ S\times Y^s,\\
-\kappa^s\nabla\eta\cdot n_\Gamma&=0&\quad&\text{on}\ \ S\times\Gamma,\\
\eta(0, \cdot)&=\theta_0^s-\theta_0^f&\quad&\text{in}\ \ Y^s.
\end{alignat*}
\end{minipage}
\end{subequations}
\end{lemma}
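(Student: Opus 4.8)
The plan is to exploit the linearity of the microscale solid cell problem \eqref{thm_d:hom2}--\eqref{thm_d:hom6}. For a.e.\ fixed $x\in\Omega^p$ this is a linear parabolic initial--boundary value problem on $Y^s$ whose only coupling to the macroscopic fluid temperature enters through the Robin term on $\Gamma$; its solution operator is therefore an affine map of the data $\big(\theta^f(\cdot,x)|_{(0,t)},\,f^s(\cdot,x,\cdot),\,\theta_0^s(x,\cdot)\big)$. Decomposing along these inputs and treating the $\theta^f$-contribution by Duhamel's principle suggests the representation
\[
\theta^s(t,x,y)=\int_0^t\theta^f(\tau,x)\,\partial_t\xi(t-\tau,y)\di\tau+\eta(t,x,y)+\theta_0^f(x),
\]
with $\xi$ the cell response to a unit ambient temperature --- hence the inhomogeneity $\alpha(\xi-1)$ in its Robin condition and the zero initial datum --- and $\eta+\theta_0^f$ carrying the true source $f^s$ and initial state $\theta_0^s$. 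First I would note that, by standard parabolic theory (Galerkin approximation together with the energy estimates used in \cref{ex_est}), the cell problems for $\xi$ and $\eta$ are uniquely solvable in the stated spaces, so in particular $\partial_t\xi\in L^2(S\times Y^s)$ and $\partial_t\eta\in L^2(S\times\Omega^p\times Y^s)$ and the convolution above is well defined. Differentiating the Duhamel integral, using $\xi(0,\cdot)=0$ together with the equation and the Robin condition for $\xi$, one then checks that the right-hand side solves \eqref{thm_d:hom2}--\eqref{thm_d:hom6}; by the uniqueness statement in \cref{hom:a} it must coincide with $\theta^s$.

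The remaining step is algebraic. Substituting this representation into the exchange term $\alpha\int_\Gamma(\theta^f-\theta^s)\di\sigma$ of \eqref{thm_d:hom1b}: the constant-in-$y$ part contributes $\alpha|\Gamma|\theta^f$; the Duhamel part, after interchanging the $\tau$-integration with the integral over $\Gamma$, produces the memory term $\int_0^t\theta^f(\tau)\psi(t-\tau)\di\tau$ with kernel $\psi(t)=\int_\Gamma\partial_t\xi(t,y)\di\sigma$; and the $\eta$- and $\theta_0^f$-parts collapse into $-\bar\eta$ with $\bar\eta(t,x)=\int_\Gamma\eta(t,x,y)\di\sigma+|\Gamma|\theta_0^f(x)$. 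This turns \eqref{thm_d:hom1b} into \eqref{thm_d:2hom2}, while the fluid-layer equation \eqref{thm_d:2hom1} and all boundary, interface, and initial conditions are left untouched; since every manipulation is reversible, the two formulations are equivalent, and well-posedness of the memory form follows from \cref{hom:a}.

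The technical heart --- and the step I expect to be the main obstacle --- is making the Duhamel representation rigorous. The kernel $\partial_t\xi$ behaves like the time derivative of a heat kernel and need not be continuous up to $t=0$, so the convolution should really be read as $\partial_t\!\int_0^t\xi(t-\tau,\cdot)\,\theta^f(\tau,\cdot)\di\tau$ (legitimate because $\xi(0,\cdot)=0$) and handled inside the weak formulation rather than pointwise. Along the way one must justify commuting $\partial_t$ with $\Delta_y$ and with the trace operator onto $\Gamma$ when verifying the cell equation and the Robin condition for the Duhamel term, the interchange of $\tau$-integration with $\int_\Gamma(\cdot)\di\sigma$ in the derivation of $\psi$, and the attainment of the initial condition $\theta^s(0)=\theta_0^s$, which here reduces to $\eta(0,\cdot)=\theta_0^s-\theta_0^f$ plus the vanishing of the convolution at $t=0$. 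Once these regularity and trace points are settled, the equivalence of the two descriptions --- with the memory term understood as a bounded Volterra operator in time --- follows from the computation above.
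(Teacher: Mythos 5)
Your proposal is correct and follows essentially the same route as the paper: the paper likewise defines the Duhamel-type convolution $\vartheta(t,x,y)=\theta_0^f(x)+\int_0^t\theta^f(\tau,x)\,\partial_t\xi(t-\tau,y)\di{\tau}$, verifies through the weak formulation that it solves the solid cell problem with Robin datum $\theta^f$ and initial value $\theta_0^f$, and then uses linearity to write $\theta^s=\vartheta+\eta$, from which the memory kernel $\psi$ and the source $\bar{\eta}$ drop out exactly as in your substitution into the exchange term. Your additional remarks on reading the convolution as $\partial_t\int_0^t\xi(t-\tau,\cdot)\theta^f(\tau,\cdot)\di{\tau}$ and on the trace/regularity issues are a slightly more careful elaboration of what the paper handles implicitly via its convolution identities, but they do not constitute a different argument.
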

\begin{proof}
We take a closer look at the \menew{convolution in time}
\[
\vartheta(t,x,y)=\theta_0^f(x)+\int_0^t\theta^f(\tau,x)\partial_t\xi(t-\tau,y)\di{\tau}
\]
which satisfies $\vartheta(0,x,y)=\theta_0^f(x)$ almost everywhere in $\Omega^p\times Y^s$.
For the regularity, we have
\[
\vartheta\in L^2(S\times\Omega^p; H^1(Y^s))\quad\text{such that}\ \partial_t\vartheta \in L^2(S\times\Omega^p\times Y^s).
\]
We calculate for any test function $\phi\in H^1(Y^s)$ using convolution properties:
\begin{align*}
\int_{Y^s}\partial_t\vartheta(t,x,y)\phi(y)\di{y}
&=\int_{Y^s}\int_0^t\partial_t\theta^f(t-\tau,x)\partial_t\xi(\tau,y)\phi(y)\di{\tau}\di{y},\\
\int_{Y^s}\nabla_y\vartheta(t,x,y)\nabla_y\phi(y)\di{y}&=\int_{Y^s}\int_0^t\partial_t\theta^f(t-\tau,x)\nabla_y\xi(\tau,y)\nabla_y\phi(y)\di{\tau}\di{y},\\
\int_{\Gamma}\alpha(\vartheta(t,x,y)-\theta^f(t,x))\phi(y)\di{\sigma}
&=\int_{\Gamma}\alpha\left(\int_0^t\partial_t\theta^f(t-\tau,x)\xi(\tau,y)\di{\tau}-(\theta^f(t,x)-\theta_0^f(x))\right)\phi(y)\di{\sigma},\\
&=\int_{\Gamma}\int_0^t\partial_t\theta^f(t-\tau,x)\alpha\left(\xi(\tau,y)-1\right)\phi(y)\di{\tau}\di{\sigma}.
\end{align*}
As a result, for almost all $x\in\Omega^p$, $\vartheta(\cdot,x,\cdot)$ is the unique weak solution of
\begin{alignat*}{2}
\rho^sc^s\partial_t\vartheta-\kappa^s\Delta_y\vartheta&=0&\quad&\text{in}\ \ S\times Y^s,\\
-\kappa^s\nabla\vartheta\cdot n_\Gamma&=\alpha(\vartheta-\theta^f)&\quad&\text{on}\ \ S\times\Gamma,\\
\vartheta(0, \cdot)&=\theta_0^f&\quad&\text{in}\ \ Y^s.
\end{alignat*}
Given the linearity of the model, we can eliminate the function $\theta^s$ using the \tfnew{solutions $\vartheta$ and $\eta$ of the cell problems} via
\[
\theta^s=\vartheta+\eta,
\]
which leads to the memory term.
\end{proof}

In this model, the additional source density $\bar{\eta}$ \me{in \cref{thm_d:2hom2}} accounts for the heat transfer \me{between the solid and the fluid system due to both differences in the initial temperature distributions ($\theta_0^s-\theta_0^f$) as well as a result of the source density $f^s$.\medskip}

Considering Case (b), which features a connected solid matrix, we deduce a two-phase mixture model:

\begin{theorem}[Homogenization (Case b)]\label{hom:b}
    Let Assumptions (A1)--(A7) (in their (b)-Variants) be satisfied.
    Then, $\theta_\e^f\to\Theta$ in $L^2(S\times\Omega)$ and $\theta_\e^s\to\theta^s$ in $L^2(S\times\Omega^p)$ for $\e\to0$, where
    \[
    (\Theta,\theta^s)\in L^2(S;H^1(\Omega)\times H^1(\Omega^p))\ \text{s.t.}\ 
    (\partial_t\Theta,\partial_t\theta^s)\in L^2(S\times\Omega)\times L^2(S\times\Omega^p)
    \]
    is the unique solution of the homogenized system \me{(we set $\theta^{ff}=\Theta_{|\Omega^{ff}}$ and $\theta^{f}=\Theta_{|\Omega^{p}}$)}
    \begin{subequations}\label{homb}
    \begingroup
    \allowdisplaybreaks
    \begin{alignat}{2}
    \rho^f c^f \partial_t\theta^{ff}-\dive(\kappa^{f}\nabla\theta^{ff}-\rho^f c^f v\theta^{ff})&=f^{ff}&\quad&\text{in}\ \ S\times\Omega^{ff},\\
    |Y^f|\rho^f c^f\partial_t\theta^{f}-\dive(\kappa_h^f\nabla\theta^{f}-\rho^f c^f\bar{v}_D\theta^{f})&=|Y^f| f^f -\alpha|\Gamma|(\theta^f-\theta^s)&\quad&\text{in}\ \ S\times\Omega^{p},\label{homb_porous}\\
    |Y^s|\rho^s c^s\partial_t\theta^s-\dive(\kappa_h^s\nabla\theta^s)&=|Y^s| f^s + \alpha|\Gamma|(\theta^f-\theta^s)&\quad&\text{in}\ \ S\times\Omega^{p},\\
    \theta^f&=\theta^{ff}&\quad&\text{on}\ \ S\times\Sigma,\\
    -(\kappa_h^s\nabla\theta^s+\kappa_h^f\nabla\theta^{f})\cdot n_\Sigma&=-\kappa^f\nabla\theta^{ff}\cdot n_\Sigma&\quad&\text{on}\ \ S\times\Sigma,\label{homb_porous2}\\
    -\kappa_h^s\nabla\theta^s\cdot n_\Sigma&=\alpha|\Sigma^s|(\theta^s-\theta^f)&\quad&\text{on}\ \ S\times\Sigma\label{homb_porous3}, \\
    \Theta(0, \cdot)&=\theta^{f}_0&\quad&\text{in}\ \ \Omega,\\
    \theta^{s}(0, \cdot)&=\theta^{s}_0&\quad&\text{in}\ \ \Omega^{p}.
    \end{alignat}
    \endgroup
    The matrices $\kappa^f_h,\kappa^s_h\in\R^{3\times3}$ are given by
    \begin{equation*}
    (\kappa_h^f)_{ij}=\me{\kappa^f}\int_{Y^f}(\nabla_y\xi_i^f+e_i)\cdot e_j\di{y},\qquad
    (\kappa_h^s)_{ij}=\me{\kappa^s}\int_{Y^s}(-\nabla_y\xi^s_i+e_i)\cdot e_j\di{y}.
    \end{equation*}
    Here, the $\xi_i^r\in H^1_\#(Y^f)$ ($i=1,2,3$, $r=f,s$) are the unique, zero-average \tfnew{solutions} to the cell problems
    \begin{alignat}{2}
    -\Delta_y\xi_i^r&=0&\quad&\text{in}\ \ Y^r,\\
    -\nabla_y\xi_i^r\cdot n_\Gamma&=e_i\cdot n_\Gamma&\quad&\text{on}\ \ \Gamma,\\
    y&\mapsto\xi_i^r(y)&\quad&\text{is $Y$-periodic}.
    \end{alignat}
    \end{subequations}
\end{theorem}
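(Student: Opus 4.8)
The plan is to follow the same two-scale homogenization scheme as for Case (a) (\cref{hom:a}); the decisive structural difference is that here the solid conductivity is of order one ($\kappa_\e^s=\kappa^s$) and $Z_\e^s$ is connected, so the solid temperature survives the limit as a genuine macroscopic field carrying its own effective diffusion instead of as a distributed micro-variable. First I would invoke \cref{ex_est} with $\gamma=0$ to obtain the $\e$-uniform bounds \cref{esp_estimate}; in particular $\theta_\e^f$ and $\theta_\e^s$ are bounded in $L^2(S;H^1(\Omega_\e^f))$ and $L^2(S;H^1(\Omega_\e^s))$, respectively. Using the $\e$-uniform Sobolev extension operators of \cref{ext_op} --- whose availability is exactly why Case (b) requires both $Z_\e^f$ and $Z_\e^s$ to be Lipschitz --- I would extend $\theta_\e^f$ to $\Omega$ and $\theta_\e^s$ to $\Omega^p$, keeping the $L^2$- and $H^1$-bounds. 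Reading off a bound on $\partial_t\theta_\e^r$ from \cref{eps_weak_form} and invoking an Aubin--Lions--Simon compactness argument on the fixed domains then yields, along a subsequence, $\theta_\e^f\to\Theta$ strongly in $L^2(S\times\Omega)$ and $\theta_\e^s\to\theta^s$ strongly in $L^2(S\times\Omega^p)$, with $\Theta\in L^2(S;H^1(\Omega))$ and $\theta^s\in L^2(S;H^1(\Omega^p))$; classical two-scale compactness (\cite{A92,LNW02}) additionally provides correctors $\theta_1^r\in L^2(S\times\Omega^p;H^1_\#(Y^r))$ ($r=f,s$) with $\chi_\e^r\nabla\theta_\e^r\twosc\chi^r(\nabla\theta^r+\nabla_y\theta_1^r)$ on the porous part, while in $\Omega^{ff}$ there is no microstructure and $\nabla\theta_\e^f\rightharpoonup\nabla\theta^{ff}$. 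The strong convergence across $\Sigma$ forces $\theta^f:=\Theta_{|\Omega^p}$ and $\theta^{ff}:=\Theta_{|\Omega^{ff}}$ to coincide on $\Sigma$.

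I would then pass to the limit in \cref{eps_weak_form} using oscillating test functions $\varphi^r(t,x)+\e\varphi_1^r(t,x,\nicefrac{x}{\e})$ with smooth $Y$-periodic $\varphi_1^r$, crucially \emph{without} imposing $\varphi^f=\varphi^s$, so that the interface contributions on $\Gamma_\e$ and on $\Sigma_\e^s$ are retained. The diffusion terms converge to their two-scale integrals over $Y^r$; the convection term is handled with Assumption (A7) by splitting $v_\e\theta_\e^f=v_\e(\theta_\e^f-\Theta)+v_\e\Theta$ and combining the strong convergence of $\theta_\e^f$ with $v_\e\to v$ in $L^2(S\times\Omega^{ff})$ and ${\tilde{v}_\e}{}_{|\Omega^p}\twosc\chi^fv_D$, producing the averaged drift $\bar v_D=\int_{Y^f}v_D\di{y}$; the scaled bulk interface term is passed to the limit by surface (boundary) two-scale convergence on $\Gamma_\e$, using $\e|\Gamma_\e|\to|\Omega^p||\Gamma|$ and the $H^1$-bounds together with the strong $L^2$-convergence of both temperatures, giving $\e\alpha(\theta_\e^f-\theta_\e^s,\varphi^f-\varphi^s)_{\Gamma_\e}\to\alpha|\Gamma|\int_{S\times\Omega^p}(\theta^f-\theta^s)(\varphi^f-\varphi^s)$; and the unscaled term on $\Sigma_\e^s$ converges analogously (with $|\Sigma_\e^s|\to|\Sigma||\Sigma^s|$) to $\alpha|\Sigma^s|\int_{S\times\Sigma}(\theta^f-\theta^s)(\varphi^f-\varphi^s)$. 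Separating macroscopic test functions $\varphi^r$ from microscopic ones $\varphi_1^r$ in the resulting two-scale identity decouples the problem: varying $\varphi_1^r$ yields the cell problems for $\xi_i^r$ and expresses $\theta_1^r$ as a linear combination of the $\xi_i^r$ with coefficients $\partial_{x_j}\theta^r$, which identifies $\kappa_h^f$ and $\kappa_h^s$; varying $\varphi^f$ and $\varphi^s$ independently, with and without support meeting $\Sigma$, yields the macroscopic PDEs, the homogeneous Neumann conditions on $\partial\Omega$, the matching $\theta^f=\theta^{ff}$, the flux transmission \cref{homb_porous2}, and the Robin condition \cref{homb_porous3} on $\Sigma$.

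The step I expect to be the main obstacle is the rigorous, simultaneous treatment of the two oscillating surface terms: one must show that the traces of the extended temperatures on $\Gamma_\e$ and on $\Sigma_\e^s$ converge in the appropriate two-scale-on-surfaces sense, which needs the $H^1$-bounds on the extensions together with a periodic-unfolding/trace estimate (cf.~\cite{N96}), and --- unlike in Case (a) --- one must carefully track the non-vanishing interface $\Sigma_\e^s$ to extract the exchange condition \cref{homb_porous3}, while getting the normal-flux bookkeeping at $\Sigma$ right is what produces \cref{homb_porous2}. A secondary technicality is the compactness needed for the strong convergence of $\theta_\e^f$ on the $\e$-dependent fluid domain, which is resolved by extending first and then applying Aubin--Lions on $\Omega$.

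Finally, uniqueness of $(\Theta,\theta^s)$ for system \cref{homb} follows from energy estimates for the difference $(\overline\Theta,\overline\theta^s)$ of two solutions: testing the (combined) $\Theta$-equation with $\overline\Theta$ and the $\theta^s$-equation with $\overline\theta^s$ and adding, the coupling produces the nonnegative terms $\alpha|\Gamma|\int_{\Omega^p}(\overline\theta^f-\overline\theta^s)^2$ and $\alpha|\Sigma^s|\int_\Sigma(\overline\theta^f-\overline\theta^s)^2$, the convection term is absorbed into the gradient term by Young's inequality, and Gronwall's inequality (with zero initial data) forces $\overline\Theta\equiv0$ and $\overline\theta^s\equiv0$; uniqueness of the limit then upgrades the subsequential convergence to convergence of the whole sequence.
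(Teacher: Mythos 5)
Your proposal is correct and follows essentially the same route as the paper: $\e$-uniform estimates from \cref{ex_est} with $\gamma=0$, extension plus compactness for the strong $L^2$-limits, two-scale compactness for the correctors, oscillating test functions with independent macroscopic fluid and solid components, separate treatment of the $\e$-scaled interior interface term on $\Gamma_\e\cap\Omega^p$ and the unscaled term on $\Sigma_\e^s$, decoupling via the cell problems for $\xi_i^r$, and uniqueness by energy estimates (\cref{d:lem:limits} and the computations of \cref{sec:con}). The only cosmetic difference is that you invoke Aubin--Lions--Simon and unfolding-type trace estimates where the paper cites the compact embedding $H^1\hookrightarrow L^2$ and the surface two-scale convergence results of \cite{PS08,GP14}; the substance is the same.
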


\begin{proof}
The limits of $(\theta_\e^f,\theta_\e^s)$ and the general limiting procedure are presented in \cref{sec:con}.
The uniqueness follows again from energy estimates similarly to~\cref{hom:b}.
\end{proof}

\begin{remark}[The case of perfect heat transmission]\label{rem:perf_trans}
\menew{Comparing with the homogenization results in \cref{hom:a} for Case (a) and \cref{hom:b} for Case (b), we expect similar homogenization limits when there is no thermal resistivity, i.e., when \eqref{problem1:4} in the $\e$-problem is replaced with the condition $\theta_\e^f=\theta_\e^s$.
We expect the following changes:}
\begin{itemize}
    \item Case (a): There are two relevant changes. \Cref{thm_d:hom4} reduces to $\theta^s=\theta^f$ on $S\times\Omega^p\times Y^s$ and \cref{thm_d:hom1b} instead reads as
    \[
    |Y^f|\rho^fc^f\partial_t\theta^f-\dive(\kappa_h^f\nabla\theta^f-\rho^fc^f\bar{v}_D\theta^f)=|Y^f|f^f+\int_{\Gamma}\kappa_s\nabla\theta^s\cdot n_\Gamma\di{y}\quad\text{in}\ \ S\times\Omega^p.
    \]
    In other words, the temperature continuity is enforced at the microscale via a Dirichlet boundary condition on $\Gamma$ and the overall energy balance via a source contribution on the macroscale. 
    This is a non--standard configuration (high contrast inclusions with perfect interface transmission) potentially without any clear relevance for applications and, therefore, without many examples in the literature.
    In \cite{Yeh2009}, a very similar setup is chosen (in a stationary setting) with structurally the same limit coupling (continuity equation plus flux contribution in macroscopic equation). 
    \item Case (b): This type of setup seems more reasonable for perfect transmission and has been studied in the literature for other problems. By introducing mixture properties, e.g., $\widetilde{\rho c}=|Y^f|\rho^fc^f+|Y^s|\rho^sc^s$, the limit in the porous part is a one-temperature model for the temperature $\tilde{\theta}$ instead of \cref{homb_porous}:
    \begin{subequations}
    \begin{equation}\label{eq:alternative1}
    \widetilde{\rho c}\partial_t\tilde{\theta}-\dive(\tilde{\kappa_h}\nabla\tilde{\theta}-\rho^f c^f\bar{v}_D\tilde{\theta})=\tilde{f} \quad\text{in}\ \ S\times\Omega^{p}.
    \end{equation}
    Analogously, \cref{homb_porous2} becomes
    \begin{equation}\label{eq:alternative2}
    -\tilde{\kappa_h}\nabla\tilde{\theta}\cdot n_\Sigma=-\kappa^f\nabla\theta^{ff}\cdot n_\Sigma\quad\text{on}\ \ S\times\Sigma
    \end{equation}
    \end{subequations}
    and \cref{homb_porous3} reduces to $\tilde{\theta}=\theta^f$ on $S\times\Sigma$.
    \menew{This is very close to the system established in \cite{OT98b} via RVE averaging.
    The main difference between the limit model presented in \cite{OT98b} and our homogenization limit (where \cref{homb_porous,homb_porous2} are replaced by \cref{eq:alternative1,eq:alternative2}) is an additional flux jump term which is supposed to account for some deviations from thermal equilibrium in the boundary region.}
\end{itemize}
\end{remark}


\section{Homogenization}\label{sec:homogenization}
In this section, we \tf{prove in detail} the homogenization of system \eqref{problem1} in the case of disconnected inclusions (Case (a), see \cref{sec:disc}) and connected solid matrix (Case (b), see \cref{sec:con}).
This is done in the context of two-scale convergence, see, e.g.,~\cite{A92,LNW02} for an overview.

With both geometries, we have to deal with perforated domains depending on $\e$.
Generally speaking, extending the functions and their gradients trivially by zero is sufficient for linear problems like ours: for every function $\phi$ defined on either $\Omega_\e^f$ or $\Omega_\e^s$, $\widehat{\phi}$ denotes the zero extension to the whole of $\Omega$ or $\Omega^p$, respectively.
However, \me{to establish} the continuity conditions for the fluid temperatures at the interface $\Sigma$, we also \me{require} uniform $H^1$-extension operators.

\begin{lemma}[Extension operators]\label{ext_op}
    There is a family of linear extension operators ${\mathcal{P}_\e\colon H^1(\Omega_\e^f)\to H^1(\Omega)}$ such that
    \[
    \|\mathcal{P}_\e\phi\|_{H^1(\Omega)}\leq C_{ext}\|\phi\|_{H^1(\Omega_\e^f)}\quad (\phi\in H^1(\Omega_\e^f))
    \]
    where $C_{ext}>0$ does not depend on $\e$.
\end{lemma}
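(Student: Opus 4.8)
The plan is to construct $\mathcal{P}_\e$ by the standard periodic-cell extension technique, exploiting the fact that $\Omega^p$ is a finite union of axis-parallel unit cubes exactly tiled by $\e Y$-cells, and that the free-fluid layer $\Omega^{ff}$ is already part of $\Omega_\e^f$ so no extension is needed there. First I would recall the reference-cell extension: since $Y^f$ is a Lipschitz domain with $\overline{Y^f}$ meeting $\partial Y$ in the periodic way (Case (b)) or $Y^f$ being the connected complement of a compactly-contained inclusion (Case (a)), there is a bounded linear extension operator $P\colon H^1(Y^f)\to H^1(Y)$ with $\|Pu\|_{H^1(Y)}\le c\|u\|_{H^1(Y^f)}$. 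By periodicity $P$ extends to an operator on the infinite perforated strip, and rescaling $x\mapsto x/\e$ gives cellwise operators whose $L^2$-part scales with no $\e$-weight and whose gradient-part scales correctly, so that on each $\e Y$-cell $\e(k+Y)$ one gets $\|\nabla \mathcal{P}_\e\phi\|_{L^2(\e(k+Y))}\le c\|\nabla\phi\|_{L^2(\e(k+Y^f))} + (c/\e)\|\phi - \fint_{\e(k+Y^f)}\phi\|_{L^2(\e(k+Y^f))}$, and then the Poincaré inequality on the reference cell (scaled) absorbs the zero-order term back into the gradient, giving a genuinely $\e$-uniform local estimate. Summing over all cells $k$ with $\e(k+Y)\subset\Omega^p$ yields $\|\mathcal{P}_\e\phi\|_{H^1(\Omega^p)}\le C\|\phi\|_{H^1(\Omega^p\cap Z_\e^f)}$.

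Next I would deal with the boundary cells and the matching across $\Sigma$. Because $\Omega$ (hence $\Omega^p$ and $\Omega^{ff}$) is assumed to be a finite union of axis-parallel integer-coordinate cubes, and $\e_0$ was chosen so that both subdomains tile perfectly with $\e_0 Y$-cells (and $\e_n = 2^{-n}\e_0$ keeps this property), there are \emph{no} partially-cut cells at $\partial\Omega$ or at $\Sigma$: every $\e Y$-cell is either entirely inside $\Omega^p$ or entirely outside. This is precisely the point of the technical assumption flagged in the text (citing \cite{Acerbi1992}), and it removes the usual headache of handling boundary-layer cells. On the cells adjacent to $\Sigma$ from the porous side, the reference extension $P$ can be chosen to respect the Lipschitz character of $Y$ up to its faces, so traces on $\Sigma_\e^r$ are controlled; and since $\mathcal{P}_\e\phi$ agrees with $\phi$ on $\Omega^{ff}\subset\Omega_\e^f$, the extended function lies in $H^1(\Omega)$ (no spurious jump across $\Sigma$), with the trace on $\Sigma$ inheriting the $H^{1/2}$ bound from the $H^1(\Omega)$ bound. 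Finally one sets $\mathcal{P}_\e\phi := \phi$ on $\Omega^{ff}$ and $\mathcal{P}_\e\phi := $ (the cellwise periodic extension) on $\Omega^p$, checks the two pieces glue in $H^1$, and collects the estimates into $\|\mathcal{P}_\e\phi\|_{H^1(\Omega)}\le C_{ext}\|\phi\|_{H^1(\Omega_\e^f)}$ with $C_{ext}$ depending only on $P$, $d$, and the fixed geometry.

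The main obstacle is establishing the $\e$-uniformity of the constant, i.e. making sure that no factor of $\e^{-1}$ survives. This is handled entirely by the scaling argument together with the Poincaré inequality on the \emph{fixed} reference cell $Y^f$: the naive cellwise bound produces a term $\e^{-1}\|\phi-\fint\phi\|_{L^2}$, and only because $\fint_{Y^f}(\phi-\fint\phi) = 0$ does Poincaré turn this into $\e^{-1}\cdot\e\,\|\nabla\phi\|_{L^2} = \|\nabla\phi\|_{L^2}$, with the constant coming from $Y^f$ alone. One should also note that in Case (a) the extension is across compactly contained inclusions (classical, e.g.\ via \cite{Acerbi1992} or Cioranescu–Saint Jean Paulin) and needs no special care near $\partial Y$, whereas in Case (b) the slightly more delicate part is that $P$ must be compatible with periodicity so that the cellwise-extended pieces match on the internal faces $\e(k+\partial Y)$ — but this is exactly what the standard periodic extension lemma provides, so I would simply invoke it rather than reprove it. The remaining bookkeeping (summation over cells, gluing the $\Omega^p$ and $\Omega^{ff}$ pieces) is routine.
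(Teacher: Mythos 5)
For Case (a) your construction is sound and is essentially the classical Cioranescu--Saint Jean Paulin argument that the paper cites: since $\overline{Y^s}\subset Y$, the reference extension can be taken to coincide with the identity near $\partial Y$, so the cellwise pieces match automatically on internal faces, and the rescaling-plus-Poincar\'e argument you describe yields the $\e$-uniform constant. The paper does not reprove any of this; its proof consists of the two citations \cite[Section 2.3]{CS99} for Case (a) and \cite[Theorem 2.2]{HB14} for Case (b), so your Case (a) is a fleshed-out version of the same route.

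The genuine gap is in Case (b). A purely cellwise extension cannot work there: because $Y^s$ touches $\partial Y$, the operator $P$ must assign values on $\partial Y\cap\overline{Y^s}$, and $P$ applied to $\phi|_{\e(k+Y^f)}$ and to $\phi|_{\e(k'+Y^f)}$ for adjacent cells $k,k'$ is computed from \emph{different} data, so the two traces on the solid part of the shared face have no reason to coincide. There is no ``standard periodic extension lemma'' delivering this exact matching; the actual constructions (Acerbi et al.\ \cite{Acerbi1992}, H\"opker--B\"ohm \cite{HB14}) extend on \emph{overlapping} neighborhoods of cells and glue with a partition of unity, which is precisely why the naive estimate degenerates in a boundary layer of width of order $\e$ near $\partial\Omega^p$ and near $\Sigma$, where the overlapping neighborhoods protrude from the domain. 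The axis-parallel-cube assumption is therefore not primarily about avoiding partially cut cells (your reading); its role is to make the boundary faces flat so that a reflection argument can repair the boundary layer, which is the content of \cite[Theorem 2.2]{HB14}. Relatedly, your construction does not guarantee that the extension into the solid cells adjacent to $\Sigma$ has the same trace on $\Sigma_\e^s$ as $\phi$ seen from the $\Omega^{ff}$ side, which is required for $\mathcal{P}_\e\phi\in H^1(\Omega)$ and is exactly the difficulty the paper flags ($\Sigma_\e^s\neq\emptyset$). To close the argument you would need to reproduce the overlap-and-reflection construction, or simply cite it as the paper does.
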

\begin{proof}
    In the case of disconnected inclusions, these operators are readily available, see, e.g., \cite[Section 2.3]{CS99}.

    In the second case, the situation is a bit more complicated as it is not immediately clear how to extend a function $\phi\in H^1(\Omega_\e^f)$ to the whole of $\Omega$ since \menew{$\Sigma_\e^s=\Sigma\cap\overline{\Omega_\e^s}\neq\emptyset$}.
    In our specific situation \menew{where our porous domain $\Omega^p$ is a finite union of axis-parallel cubes with corner coordinates in $Z^d$}, however, this can still be handled (albeit with more technical and involved proofs). 
    For a concrete reference, we point to~\cite[Theorem 2.2]{HB14}.
\end{proof}

\me{
\begin{remark}\label{rem:strategy}
    The general strategy for the homogenization procedure via the two-scale convergence method is almost always the same and consists of the following steps (cf. \cite{AMP10,EM17,EH15,GP14}):
    \begin{enumerate}
        \item Deduce the existence of limits for the $\e$-dependent functions and their gradients (\cref{lem:limits} in Case (a) and \cref{d:lem:limits} in Case (b)).
        \item \menew{Conduct a limit procedure} with the individual terms of the weak formulations using a specific class of test functions in the same way as in the definition of two-scale convergence, see \cref{twoscale}.
        \menew{This procedure starts} with \cref{eq:limit_heat-1} in Case (a) and \cref{con:eq:limit_heat-5} in Case (b).
        \item \menew{Decouple} the general limit into individual problems and cell problems by careful consideration of the involved test functions plus density arguments.
        \menew{This leads} to system \cref{system_casea} in Case (a) and \cref{system:caseb} in Case (b).
    \end{enumerate}
    In our case of a system of linear parabolic problems, most of the actual limiting process (\menew{Steps $1$ and $2$}) is standard\tfnew{,} although some additional care is needed to ensure that the test functions are continuous across $\Sigma$.
    \end{remark}
}

\subsection{Case (a): Disconnected solid inclusions}\label{sec:disc}
Owing to \cref{ex_est}, we have unique solutions $(\theta_\e^f,\theta_\e^s)\in W_\e$ which satisfy the $\e$-uniform estimate
\begin{equation*}
\|\theta_\e^f\|^2_{L^\infty(S;L^2(\Omega_\e^f))}+\|\theta_\e^s\|^2_{L^\infty(S;L^2(\Omega_\e^s))}
+\|\nabla\theta_\e^f\|^2_{L^2(S\times \Omega_\e^f)}+\e^2\|\nabla\theta_\e^s\|^2_{L^2(S\times \Omega_\e^s)}
+\e\|\theta_\e^f-\theta_\e^s\|^2_{L^2(S\times\Gamma_\e)}\leq C.
\end{equation*}  
Based on these estimates, we are able to \menew{deduce the existence} of limit functions for $\e\to0$.
\menew{This relies on a typical compactness argument for two-scale convergence -- any $L^2$ bounded sequence has a subsequence which converges in the two-scale sense (see \cref{twoscale}).
For the compactness principle as well as further details, we refer to \cite{A92,LNW02}.}

\begin{lemma}\label{lem:limits}
There are limit functions $\theta^{f}\in L^2(S;H^{1}(\Omega^p))$, $\theta^{ff}\in L^2(S;H^{1}(\Omega^{ff}))$, $\theta^{s}\in L^2(S\times\Omega^p; H^1_\#(Y))$, as well as $\theta_1^{f}\in L^2(S\times\Omega^p; H^1_\#(Y))$ and $\theta_1^{ff}\in L^2(S\times\Omega^{ff}; H^1_\#(Y))$ such that
\begin{alignat*}{2}
i&)\ {\widehat{\theta}_\e^f}{}_{|\Omega^p}\overset{2}{\rightharpoonup} \chi^{f}\theta^f ,\qquad &ii&)\  {\nabla\widehat{\theta}_\e^f}_{|\Omega^p}\overset{2}{\rightharpoonup}\chi^{f}\left(\nabla \theta^f+\nabla_y\theta_1^{f}\right),\\
iii&)\ {\theta_\e^{f}}_{|\Omega^{ff}}\overset{2}{\rightharpoonup}\theta^{ff} ,\qquad &iv&)\  \nabla{\theta_\e^{f}}_{|\Omega^{ff}}\overset{2}{\rightharpoonup}\nabla \theta^{ff}+\nabla_y\theta_1^{ff},\\
v&)\  \widehat{\theta}_\e^s\overset{2}{\rightharpoonup}\chi^{s}\theta^s ,\qquad &vi&)\  \e\nabla  \widehat{\theta}_\e^s\overset{2}{\rightharpoonup}\chi^{s}\nabla_y\theta^s
\end{alignat*}
at least up to a subsequence.
\menew{Moreover, the fluid temperature is continuous across the interface $\Sigma$}, that is, $\theta^f=\theta^{ff}$ on $S\times\Sigma$.
In addition, for the interface integral over $\Gamma_\e$, we have
\begin{equation}\label{d:surface}
\e\int_{\Gamma_\e}(\theta_\e^f-\theta_\e^s)\varphi_\e(x,\nicefrac{x}{\e})\di{\sigma}
\to\int_{\Omega^p}\int_\Gamma(\theta^f-\theta^s)\varphi(x,y)\di{\sigma}\di{x}
\end{equation}
for all admissible test functions $\varphi$.
\end{lemma}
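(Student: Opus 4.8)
\textbf{Proof strategy for Lemma \ref{lem:limits}.}

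The plan is to extract the limits in three stages: first the two-scale limits of the fluid temperature (via the extension operator), then the high-contrast limit of the solid temperature, and finally the continuity across $\Sigma$ and the interface-integral convergence. For the fluid part, I would apply $\mathcal{P}_\e$ from \cref{ext_op} to $\theta_\e^f$, obtaining $\e$-uniform bounds on $\mathcal{P}_\e\theta_\e^f$ in $L^2(S;H^1(\Omega))$. By the standard two-scale compactness principle for bounded sequences in $L^2(S;H^1)$ (see \cite{A92,LNW02}), up to a subsequence $\mathcal{P}_\e\theta_\e^f\twosc u$ with $u$ independent of $y$, and $\nabla\mathcal{P}_\e\theta_\e^f\twosc\nabla u+\nabla_y u_1$ for some $u_1\in L^2(S\times\Omega;H^1_\#(Y)/\R)$; moreover $\mathcal{P}_\e\theta_\e^f\to u$ strongly in $L^2(S\times\Omega)$ by Rellich. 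Restricting to $\Omega^p$ and $\Omega^{ff}$ separately and using that $\mathcal{P}_\e\theta_\e^f=\theta_\e^f$ on $\Omega_\e^f\supset\Omega^{ff}$ yields $i)$--$iv)$ with $\theta^f:=u|_{\Omega^p}$, $\theta^{ff}:=u|_{\Omega^{ff}}$, and $\theta_1^f,\theta_1^{ff}$ the respective restrictions of $u_1$. Since $u\in L^2(S;H^1(\Omega))$ has a well-defined trace on the internal surface $\Sigma$ and the traces from $\Omega^p$ and $\Omega^{ff}$ agree, we get $\theta^f=\theta^{ff}$ on $S\times\Sigma$ for free.

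For the solid part, I would work with the zero-extension $\widehat\theta_\e^s$ on $\Omega^p$ and use the $\e$-uniform bounds $\|\widehat\theta_\e^s\|_{L^2}\le C$ and $\e\|\nabla\widehat\theta_\e^s\|_{L^2}\le C$ from \cref{ex_est}. Two-scale compactness gives $\widehat\theta_\e^s\twosc\chi^s\theta^s$ for some $\theta^s\in L^2(S\times\Omega^p;L^2_\#(Y))$; the bound on $\e\nabla\widehat\theta_\e^s$ gives $\e\nabla\widehat\theta_\e^s\twosc\zeta$ for some $\zeta$. The standard argument (integrate $\e\nabla\widehat\theta_\e^s\cdot\psi$ against a test function $\psi(x,x/\e)$, integrate by parts in $x$, and note the $\e$ in front kills the macroscopic derivative term) identifies $\zeta=\chi^s\nabla_y\theta^s$ and simultaneously upgrades $\theta^s$ to lie in $L^2(S\times\Omega^p;H^1_\#(Y))$ — I would restrict attention to $Y^s$ since $\widehat\theta_\e^s$ vanishes on the fluid part; this is $v)$ and $vi)$. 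The only subtlety here is that $\overline{Y^s}\subset Y$ (Case (a)), so there is no issue of boundary contributions on $\partial Y$, and $\nabla_y\theta^s$ is genuinely the gradient of a function in $H^1(Y^s)$.

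The last piece, \cref{d:surface}, is the main obstacle and requires the oscillating boundary-integral lemma: for a sequence $g_\e$ defined on $\Gamma_\e$ one has $\e\int_{\Gamma_\e}g_\e\,\varphi(x,x/\e)\,\di\sigma\to\int_{\Omega^p}\int_\Gamma g\,\varphi\,\di\sigma\di x$ whenever $g_\e$ two-scale converges on the surface to $g$; see \cite{A92,LNW02,N96}. The difficulty is that $\theta_\e^f$ and $\theta_\e^s$ have only weak two-scale convergence, which is not a priori enough to pass to the limit in the surface integral — one needs either strong two-scale convergence of the traces or a compensated-compactness/div-curl type argument. The cleanest route is to use the surface two-scale compactness result together with the $\e$-uniform control of the full gradients ($\nabla\widehat\theta_\e^f$ bounded, $\e\nabla\widehat\theta_\e^s$ bounded), which by a trace inequality on the reference cell gives $\e$-uniform bounds on $\e\|\theta_\e^f\|^2_{L^2(\Gamma_\e)}$ and $\e\|\theta_\e^s\|^2_{L^2(\Gamma_\e)}$, hence surface two-scale limits exist; these limits must coincide with the traces of $\theta^f$ (constant in $y$) and $\theta^s|_\Gamma$ respectively, because for the fluid the strong $L^2(S\times\Omega^p)$ convergence of the extension plus the $H^1$ bound lets one identify the surface limit, and for the solid the identification $\e\nabla\widehat\theta_\e^s\twosc\chi^s\nabla_y\theta^s$ controls the oscillation near $\Gamma$. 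Combining the two surface limits with the oscillating-integral lemma yields \cref{d:surface}. I would present this identification carefully, as it is where "weak" must be promoted to "strong enough for traces," which is the technical heart of the lemma.
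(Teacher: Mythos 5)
Your proposal is correct and follows essentially the same route as the paper: standard two-scale compactness from the uniform estimates of \cref{ex_est} for $i)$--$vi)$, the extension operator of \cref{ext_op} plus Rellich to get strong $L^2$ convergence and hence continuity of the fluid temperature across $\Sigma$, and the oscillating surface-integral lemma (which the paper simply cites as \cite[Theorem 2.39 (iii)]{PS08}) for \eqref{d:surface}. The only difference is one of presentation — you route the fluid limits through $\mathcal{P}_\e$ from the start and spell out the trace-identification behind the surface limit, both of which the paper delegates to the literature.
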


\begin{proof}
The two-scale limits $(i)$--$(vi)$ follow directly from the $\e$-uniform estimates given via \cref{ex_est}, see, e.g., \cite[\menew{Theorem 1.2 and Proposition 1.14}]{A92}.

With the use of the extension operators given via \cref{ext_op} and the corresponding estimate for $\|\mathcal{P}_\e\theta_\e^f\|_{H^1(\Omega)}$ based on the a priori estimates for $\theta_\e^f$, we can conclude the existence of $\vartheta\in H^1(\Omega)$ such that   $\mathcal{P}_\e\theta_\e^f\to \vartheta$ converges weakly in $H^1(\Omega)$ along a subsequence.
Due to the compact embedding $H^1(\Omega)\hookrightarrow L^2(\Omega)$, this implies strong convergence $\mathcal{P}\theta_\e^f\to\vartheta$ in $L^2(\Omega)$.
As a consequence, we \me{can infer} that $\vartheta_{|\Omega^{ff}}=\theta^{ff}$ as well as $\vartheta_{|\Omega^{p}}=\theta^f$ due to $\chi_\e^f\twosc\chi^f$.
The temperatures are therefore continuous across the interface $\Sigma$.
For the surface integral limit \eqref{d:surface}, we refer to \cite[Theorem 2.39 (iii)]{PS08}.
\end{proof}

With these limits in mind, we are now passing to the limit $\e\to0$.
To that end, let $\varphi_0^{f}\in C^\infty(\overline{S\times\Omega})$ and $\varphi_1^r\in C^\infty(\overline{S\times\Omega};C_\#(Y^r))$ ($r=s,f$) satisfying $\varphi_0^{f}(T)=0$ and $\varphi_1^r(T)=0$.
In addition, let $\varphi_1^{ff}\in C^\infty(\overline{S\times\Omega};C_\#(Y))$ such that $\varphi_1^f(\cdot,x,y)=\varphi_1^{ff}(\cdot,x,y)$ for all $(x,y)\in\Sigma\times\Sigma_0$ as well as $\varphi_1^{ff}(T)=0$.
We take as test functions $\varphi_\e^r\colon S\times\Omega_\e^r\to\R$ ($r=s,f$) defined via 
\begin{align*}
\varphi_\e^{f}(t,x)&=\varphi^{f}_0(t,x)+\e\begin{cases}\varphi_1^{f}(t,x,\nicefrac{x}{\e})& (t,x)\in S\times \menew{(}\Omega_\e^f\cap\Omega^p\menew{)},\\
\varphi_1^{ff}(t,x,\nicefrac{x}{\e})& (t,x)\in S\times \Omega^{ff}
\end{cases},\\
\varphi_\e^{s}(t,x)&=\varphi_1^s(t,x,\nicefrac{x}{\e}).
\end{align*}
Due to the equality of $\varphi_1^f$ and $\varphi_1^{ff}$ \menew{for all $(x,y)\in\Sigma\times\Sigma_0$}, $\varphi_\e^f$ is continuous across $\Sigma$ thereby satisfying $\varphi_\e^f(t,\cdot)\in H^1(\Omega_\e^f)$.
With the two-scale limits of $\theta_\e^f$ and $\theta_\e^s$ (as established in \cref{lem:limits}), the limits $\e\to0$ in the weak formulation \cref{eps_weak_form} can be evaluated.
For the time derivatives, we find that
\begin{subequations}
\begin{align}
\int_{\Omega}\chi_\e^f\rho^fc^f\theta_\e^f\partial_t\varphi_\e^f\di{x}
    &\to\int_{\Omega^p}|Y^f|\rho^fc^f\theta^f\partial_t\varphi_0^f\di{x}
    +\int_{\Omega^{ff}}\rho^fc^f\theta^{ff}\partial_t\varphi_0^f\di{x},\label{eq:limit_heat-1}\\
\int_{\Omega}\chi_\e^s\rho^sc^s\theta_\e^s\partial_t\varphi_\e^s\di{x}
    &\to\int_{\Omega^p\times Y^s}\rho^sc^s\theta^s\partial_t\varphi_1^s\di{(y,x)}\label{eq:limit_heat-2}.
\end{align}
Here, in \eqref{eq:limit_heat-1}, we have used that both $\theta^f$ and $\varphi_0^f$ are independent of $y\in Y$.
For the diffusive flux terms, we similarly get
\begin{align}
\int_{\Omega}\chi_\e^f\kappa^f\nabla\theta^f_\e\cdot\nabla\varphi_\e^f\di{x}
    &\to\int_{\Omega^p\times Y^f}\kappa^f(\nabla\theta^f+\nabla_y\theta_1^f)\cdot(\nabla\varphi_0^f+\nabla_y\varphi_1^f)\di{(x,y)}\nonumber\\
    &\hspace{1cm}+\int_{\Omega^p\times Y}\kappa^f(\nabla\theta^{ff}+\nabla_y\theta_1^{ff})\cdot(\nabla\varphi_0^f+\nabla_y\varphi_1^{ff})\di{(x,y)},\label{eq:limit_heat-3}\\
\e^2\int_{\Omega}\chi_\e^s\kappa^s\nabla\theta^s_\e\cdot\nabla\varphi_\e^s\di{x}
    &\to\int_{\Omega^p\times Y^s}\kappa^s\nabla_y\theta^s\cdot\nabla_y\varphi_1^s\di{(x,y)}.\label{eq:limit_heat-5}
\end{align}
For the convective flux term, \menew{we make use of our assumption} of strong convergence of ${v_\e}_{|\Omega^{ff}}$ to $v$ and the two-scale convergence of ${v_\e}_{|\Omega^p}$ to $v_D$ (see Assumption (A7)).
Also, $(\mathcal{P}_\e{\theta_\e^f})_{|\Omega^p}$ converges strongly to $\theta^f$ in $L^2(S\times\Omega^p)$ (\me{as shown in the proof of \cref{lem:limits}).
We therefore have a product of a two-scale converging and a strongly converging sequence which converges to the product of the two-scale limit and the strong limit (see \cite[Theorem 1.8]{A92}}):
\begin{equation}
    \begin{split}
        \int_{\Omega}\chi_\e^f\rho^fc^fv_\e\theta^f_\e\cdot\nabla\varphi_\e^f\di{x}
        =\int_{\Omega^p}\rho^fc^fv_\e\mathcal{P}_\e\theta^f_\e\cdot\nabla\varphi_\e^f\di{x}
        +\int_{\Omega^{ff}}\rho^fc^fv_\e\theta^f_\e\cdot\nabla\varphi_\e^f\di{x} \hspace{55pt}\\
    \to\int_{\Omega^p\times Y^f}\rho^fc^fv_D\theta^f\cdot(\nabla\varphi^f+\nabla_y\varphi_1^f)\di{(x,y)}
    +\int_{\Omega^{ff}\times Y}\rho^fc^fv\theta^{ff}\cdot(\nabla\varphi^f+\nabla_y\varphi_1^{ff})\di{(x,y)}.\label{eq:limit_heat-4}
    \end{split}
\end{equation}
In the data terms, namely heat sources and initial conditions, Assumption (A5) and (A6a) allow us to pass to the limit:
\begin{align}
\int_{\Omega}\chi_\e^ff_\e^f\varphi_\e^f\di{x}+\int_{\Omega^p}\chi_\e^ff_\e^s\varphi_\e^s\di{x}
    &\to|Y^f|\int_{\Omega^p}f^f\varphi_0^f\di{x}
    +\int_{\Omega^{ff}}f^{f}\varphi_0^f\di{x}
    +\int_{\Omega^p\times Y^s}f^s\varphi_1^s\di{(x,y)},\label{eq:limit_heat-6}\\
 \int_{\Omega}\chi_\e^f\theta_{\e,0}^f\varphi_\e^f\di{x}+\int_{\Omega^p}\chi_\e^s\theta_{\e,0}^s\varphi_\e^s\di{x}
    &\to|Y^f|\int_{\Omega^p}\theta_0^f\varphi_0^f\di{x}
    +\int_{\Omega^{ff}}\theta_0^{f}\varphi_0^{f}\di{x}
    +\int_{\Omega^p\times Y^s}\theta_0^s\varphi_1^s\di{(x,y)}.\label{eq:limit_heat-7}   
\end{align}
Finally, for the interfacial heat transfer term, we have (\me{cf.~\cref{d:surface}})
\begin{equation}
\e\int_{\Gamma_\e}\alpha(\theta_\e^f-\theta_\e^s)(\varphi_\e^f-\varphi_\e^s)\di{\sigma}
    \to\int_{\Omega^p\times\Gamma}\alpha(\theta^f-\theta^s)(\varphi_0^f-\varphi_1^s)\di{(x,\sigma)}.\label{eq:limit_heat-8}
\end{equation}
\end{subequations}
\noindent
Via a typical density argument \me{(see, e.g., \cite[Theorem 2.3]{A92})}, the limit problem must also hold for all
\begin{align*}
\varphi_0^{f}&\in L^2(S;H^1(\Omega))\quad\text{such that}\ \  \partial_t\varphi_0^{f}\in L^2(S\times\Omega),\\
\varphi_1^{f}&\in L^2(S\times\Omega^p;H^1_\#(Y)),\\
\varphi_1^{ff}&\in L^2(S\times\Omega^{ff};H^1_\#(Y)),\\
\varphi_1^{s}&\in L^2(S\times\Omega^p;H^1_\#(Y))\quad\text{such that}\ \  \partial_t\varphi_1^{s}\in L^2(S\times\Omega^p\times Y),
\end{align*}
with the continuity relation $\varphi_1^f(\cdot,x,y)=\varphi_1^{ff}(\cdot,x,y)$ satisfied for almost all $(x,y)\in\Sigma\times\Sigma_0$.
Again, $\varphi_0^{f}(T)=0$ a.e.~in $\Omega$ and $\varphi_1^{s}(T)=0$ a.e.~in $\Omega^p\times Y$.

\menew{The limits given via \eqref{eq:limit_heat-1}--\eqref{eq:limit_heat-8}, which involve the set of functions $(\theta^f,\theta^f_1,\theta^{ff},\theta_1^{ff},\theta^s)$, constitute the homogenization limit of system \eqref{problem1}.
This system has a rather complex coupling (see, e.g., the diffusive flux \cref{eq:limit_heat-3}) and its interpretation as a model in the context of a heat problem is not obvious.
In particular, it is unclear what the additional functions $\theta^f_1$ and $\theta_1^{ff}$ are actually modeling.
For that reason,}
we want to decouple this limit system with the goal of arriving at a more intuitive description of the effective model.
\menew{Please note, that this decoupling is a standard step in two-scale homogenization, cf.~\cite{EM17,GP14,HJ91}.}

To that end, we start by choosing $\varphi_0^f\equiv0$, $\varphi_1^f\equiv0$, $\varphi_1^{ff}\equiv0$ so that we are left with the solid heat problem
\begin{subequations}
\begin{multline}\label{eq:hom_weak_solid-1}
    -\int_S\int_{\Omega^p\times Y^s}\hspace{-.2cm}\rho^sc^s\theta^s\partial_t\varphi_1^s\di{(x,y)}\di{t}
    +\int_S\int_{\Omega^p\times Y^s}\hspace{-.2cm}\kappa^s\nabla_y\theta^s\cdot\nabla_y\varphi_1^s\di{(x,y)}\di{t}
    +\int_S\int_{\Omega^p\times\Gamma^s}\hspace{-.2cm}\alpha(\theta^s-\theta^f)\varphi_1^s\di{(x,\sigma)}\di{t}\\
    =\int_S\int_{\Omega^p\times Y^s}f^s\varphi_1^s\di{(x,y)}\di{t}
    +\int_{\Omega^p\times Y^s}\rho^sc^s\theta_0^s\varphi_1^s(0)\di{(x,y)}\di{t}
\end{multline}
where the macroscopic variable $x\in\Omega^p$ only acts as a parameter (all derivatives are either with respect to time or the microscopic variable $y\in Y^s$).
We choose a test function $\varphi_0^f$ with compact support in $\Omega^{ff}$ and let $\varphi_1^f\equiv0$, $\varphi_1^{ff}\equiv0$, $\varphi_1^s\equiv0$:   
\begin{equation}\label{eq:hom_weak_solid-2}
    \begin{split}
        -\int_S&\int_{\Omega^{ff}}\rho^fc^f\theta^{ff}\partial_t\varphi_0^f\di{x}\di{t}
        +\int_S\int_{\Omega^{ff}\times Y}\kappa^f(\nabla\theta^{ff}+\nabla_y\theta_1^{ff})\cdot\nabla\varphi_0^f\di{(x,y)}\di{t}\\
        &+\int_S\int_{\Omega^{ff}\times Y}\rho^fc^f\theta^{ff}v\cdot\nabla\varphi_0^{f}\di{(x,y)}\di{t}
        =\int_S\int_{\Omega^{ff}}f^f\varphi_0^f\di{x}\di{t}
        +\int_{\Omega^{ff}}\rho^fc^f\theta_0^{f}\varphi_0^f(0)\di{x}.
    \end{split}
\end{equation}  
Similarly, with $\varphi_0^f$ having compact support in $\Omega^p$:
\begin{equation}\label{eq:hom_weak_solid-3}
    \begin{split}
         -\int_S\int_{\Omega^{p}}|Y^f|\rho^fc^f\theta^f\partial_t\varphi_0^f\di{x}\di{t}
        +\int_S\int_{\Omega^{p}\times Y^f}\kappa^f(\nabla\theta^f+\nabla_y\theta_1^f)\cdot\nabla\varphi_0^f\di{(x,y)}\di{t} \hspace{75pt}
        \\
        +\int_S\int_{\Omega^{p}}\rho^fc^f\theta^f\bar{v}_D\cdot\nabla\varphi_0^f\di{(x,y)}\di{t}
        +\int_S\int_{\Omega^{p}}\int_\Gamma\alpha(\theta^f-\theta^s,\varphi_0^f)\di{\sigma}\di{x}\di{t} \hspace{50pt}
        \\
        =\int_S\int_{\Omega^{p}}|Y^f|f^f\varphi_0^f\di{x}\di{t}
        +\int_{\Omega^{p}}|Y^f|\rho^fc^f\theta_0^{f}\varphi_0^f(0)\di{x},   
    \end{split}
\end{equation}
where we have set
\[
\bar{v}_D(t,x)=\int_{Y^f}v_D(t,x,t)\di{y}.
\]
Next, choosing $\varphi_0^f\equiv0$ and $\varphi_1^s\equiv0$, we get the elliptic problem
\begin{multline*}
    \kappa^f(\nabla\theta^f+\nabla_y\theta_1^f,\nabla_y\varphi_1^f)_{\Omega^p\times Y^f}
    +\kappa^f(\nabla\theta^{ff}+\nabla_y\theta_1^{ff},\nabla_y\varphi_1^{ff})_{\Omega^{ff}\times Y}\\
    +(\rho^fc^fv_D\theta^f,\nabla_y\varphi_1^f)_{\Omega^p\times Y^f}
    +(\rho^fc^fv\theta^{ff},\nabla_y\varphi_1^{ff})_{\Omega^{ff}\times Y^f}
    =0,
\end{multline*}
where we are allowed to vary test functions $\varphi_1^f$ and $\varphi_1^{ff}$ freely as long as the compatibility condition $\varphi_1^f=\varphi_1^{ff}$ is satisfied almost everywhere on $\Sigma\times\Sigma_0$.
As a consequence, we can decouple this elliptic problem into two separate problems (note that \me{both} $x\in\Omega^p$ \me{as well as} $x\in\Omega^{ff}$ implies $x\notin\Sigma$)
\begin{align}
     \kappa^f(\nabla\theta^f+\nabla_y\theta_1^f,\nabla_y\varphi_1^f)_{S\times\Omega^p\times Y^f}
    +(\rho^fc^fv_D\theta^f,\nabla_y\varphi_1^f)_{S\times\Omega^p\times Y^f}
    &=0,\label{eq:hom_weak_solid-4}\\
    \kappa^f(\nabla\theta^{ff}+\nabla_y\theta_1^{ff},\nabla_y\varphi_1^{ff})_{S\times\Omega^{ff}\times Y}
    +(\rho^fc^fv\theta^{ff},\nabla_y\varphi_1^{ff})_{S\times\Omega^{ff}\times Y}
    &=0.  \label{eq:hom_weak_solid-5} 
\end{align}
Since $\varphi_1^{ff}$ is $Y$-periodic, we have $\int_Y\nabla_y\varphi_1^{ff}\di{y}=0$, and since $v$ and $\theta^{ff}$ are $y$-independent, \cref{eq:hom_weak_solid-5} simplifies to
\begin{equation*}
\kappa^f(\nabla_y\theta_1^{ff},\nabla_y\varphi_1^{ff})_{Y}
    =0\quad\text{a.e.~in}\ S\times\Omega^p.
\end{equation*}
This elliptic problem has only \tfnew{constant solutions in the space of periodic functions,} implying that $\nabla_y\theta_1^{ff}\equiv0$.
\menew{In the porous region (\cref{eq:hom_weak_solid-4}), we find that
\[
\int_{\Omega^P}\rho^fc^f\theta_f\int_{Y^f}v_D\cdot\nabla_y\varphi_1^f\di{y}\di{x}=0
\]
since $v_D(t,x,y)\cdot n_\Gamma(y)=0$ almost everywhere on $S\times\Omega^p\times\Gamma$  as well as $\dive_y(v_D)=0$ in $S\times\Omega\times Y^f$ (Assumption (A7)).
As time is only a parameter in \cref{eq:hom_weak_solid-4}, we localize in time:
\begin{align}\label{eq:hom_weak_solid-6} 
     (\kappa^f(\nabla\theta^f+\nabla_y\theta_1^f),\nabla_y\varphi_1^f)_{\Omega^p\times Y^f}
    &=0\quad\text{a.e.~in}\ S.
\end{align}   
}
\end{subequations} 

The homogenization limit therefore can equivalently be formalized in the following weak system (summarizing \cref{eq:hom_weak_solid-1,eq:hom_weak_solid-2,eq:hom_weak_solid-3,eq:hom_weak_solid-5,eq:hom_weak_solid-6}):\footnote{Variational equalities \eqref{d:hom_var3} and \eqref{d:hom_var4} hold almost everywhere in $\Omega^p$ and $S$, respectively.}
\begin{subequations}
\begin{equation}\label{d:hom_var1}
    \begin{split}
       \int_S\int_{\Omega^{ff}}-\rho^fc^f\theta^{ff}\partial_t\varphi_0^f+\left(\kappa^f\nabla\theta^{ff}-\rho^fc^fv\theta^{ff}\right)\cdot\nabla\varphi_0^{f}\di{x}\di{t} \hspace{65pt}
       \\
        =\int_S\int_{\Omega^{ff}}f^f\varphi_0^f\di{x}\di{t}+\int_{\Omega^{ff}}\rho^fc^f\theta_0^{ff}\varphi_0^f(0)\di{x}, 
    \end{split}
\end{equation}
\begin{equation}\label{d:hom_var2}
    \begin{split}
        -\int_S\int_{\Omega^p}|Y^f|\rho^fc^f\theta^f\partial_t\varphi_0^f\di{x}\di{t}
        +\int_S\int_{\Omega^p\times Y^f}\kappa^f(\nabla\theta^f+\nabla_y\theta_1^f)\cdot\nabla\varphi_0^f\di{(x,y)}\di{t} \hspace{55pt}
        \\
        +\int_S\int_{\Omega^p}\rho^fc^f\bar{v}_D\theta^f\cdot\nabla\varphi_0^f\di{x}\di{t}
        +\int_S\int_{\Omega^p\times\Gamma}\alpha(\theta^f-\theta^s)\varphi_0^f\di{(x,\sigma)}\di{t} \hspace{35pt}
        \\
        =\int_S\int_{\Omega^p}|Y^f|f^f\varphi_0^f\di{x}\di{t}+\int_{\Omega^p}|Y^f|\rho^fc^f\theta_0^f\varphi_0^f(0)\di{x},
    \end{split}
\end{equation}
\begin{equation}\label{d:hom_var3}
    \begin{split}
        &-\int_S\int_{\Omega^p\times Y^s}\rho^sc^s\theta^s\partial_t\varphi_1^s\di{(x,y)}\di{t}
        +\int_S\int_{\Omega^p\times Y^s}\kappa^s\nabla_y\theta^s\cdot\nabla_y\varphi_1^s\di{(x,y)}\di{t} 
        \\
        &+\int_S\int_{\Omega^p\times\Gamma}\alpha(\theta^s-\theta^f)\varphi_1^s\di{(x,\sigma)}\di{t}
        =\int_S\int_{\Omega^p\times Y^s}f^s\varphi_1^s\di{(x,y)}\di{t}
        +\int_{\Omega^p\times Y^s}\rho^sc^s\theta_0^s\varphi_1^s(0)\di{(x,y)},
    \end{split}
\end{equation}
\begin{equation}\label{d:hom_var4}
    \int_{\Omega^p\times Y^f}\kappa^f(\nabla\theta^f+\nabla_y\theta_1^f)\cdot\nabla_y\varphi_1^f\di{(x,y)}\di{t}
    =0\quad\text{a.e.~in}\ S
\end{equation}
\end{subequations}
for all appropriate test functions.
We want to further decouple this problem by eliminating $\theta_1^f$ from the system.
Introducing cell solutions $\xi_i\in H_\#^1(Y^f)$, $i=1,2,3$, as the unique, zero-average solution of
\begin{alignat}{2}\label{d:cell1}
\int_{Y^f}(\nabla_y\xi_i^f+e_i)\cdot\nabla_y\phi\di{y}=0\qquad (\phi\in H_\#^1(Y^f)).
\end{alignat}
\tfnew{S}etting ($\xi^f=(\xi_1^f,\xi_2^f,\xi_3^f)$)
\[
\tau(t,x,y):=\xi^f(y)\cdot\nabla\theta^f(t,x)-\theta_1^f(t,x,y),
\]
we are able to calculate
\begin{multline*}
(\kappa^f(\nabla\theta^f+\nabla_y\theta_1^f),\nabla_y\varphi_1^f)_{\Omega^p\times Y^f}
  =\sum_{i=1}^3\int_{\Omega^p}\kappa^f\partial_i\theta^f\int_{Y^f}(\nabla_y\xi_i^f+e_i)\cdot\nabla_y\varphi_1^f\di{y}\di{x}\\
  -\int_{\Omega_p\times Y^f}\nabla_y\tau\cdot\nabla_y\varphi_1^f\di{(x,y)},
\end{multline*}
where the first term on the right hand side vanishes as $\xi_i^f$ \tfnew{solves} Problem \eqref{d:cell1}.
The function $\tau$ thus satisfies
\[
\int_{\Omega_p\times Y^f}\nabla_y\tau\cdot\nabla_y\varphi_1^f\di{(x,y)}
\]
which implies that $\tau$ is constant in $y\in Y^f$.
As a result, we can characterize $\theta_1^f(t,x,y)$ via the relation
\[
\theta_1^f(t,x,y)=\xi^f(y)\cdot\nabla\theta^f(t,x)+r(t,x)
\]
for some function $r$.
For the diffusive flux term in \eqref{d:hom_var2}, we then get
\[
\int_{\Omega^p\times Y^f}\kappa^f(\nabla\theta^f+\nabla_y\theta_1^f)\cdot\nabla\varphi_0^f\di{(x,y)}\\
=\int_{\Omega^p}\kappa^f\sum_{i=1}^3\partial_i\theta^f\int_{Y^f}(e_i+\nabla_y\xi_i)\di{y}\cdot\nabla\varphi_0^f\di{x}.
\]
Introducing the \menew{standard effective diffusivity} $\kappa^h\in\R^{3\times3}$ \menew{(cf.~\cite[Definition 1.2]{A92} or \cite[Section 2.6]{HJ91})} via
\begin{align*}
\kappa^h_{ij}&=\kappa^f\int_{Y^f}(\nabla_y\xi_i^f+e_i)\cdot e_j\di{y},
\end{align*}
the diffusive flux simplifies to
\begin{equation}\label{d:hom_diff}
\int_{\Omega^p\times Y^f}\kappa^f(\nabla\theta^f+\nabla_y\theta_1^f)\cdot\nabla\varphi_0^f\di{(x,y)}
=\int_{\Omega^p}\kappa^h\nabla\theta^f\cdot\nabla\varphi_0^f\di{x}.
\end{equation}
Summarizing these results, we are finally led to the following system of partial differential equations 
\begin{subequations}\label{system_casea}
\begin{alignat}{2}
\rho^fc^f\partial_t\theta^{ff}-\dive(\kappa^{f}\nabla\theta^{ff}-\rho^fc^fv\theta^{ff})&=f^{f}&\quad&\text{in}\ \ S\times\Omega^{ff},\label{a:hom1}\\
|Y_f|\rho^fc^f\partial_t\theta^{f}-\dive(\kappa^{h}\nabla\theta^{f}-\rho^fc^f\bar{v}_D\theta^{f})+\alpha\int_\Gamma(\theta^f-\theta^s)&=|Y_f|f^f&\quad&\text{in}\ \ S\times\Omega^{p}, \label{b:hom1}\\
\rho^s c^s \partial_t\theta^s-\kappa^s\Delta_y\theta^s&=f^s&\quad&\text{in}\ \ S\times\Omega^{p}\times Y^s. \label{c:hom1}
\end{alignat}
\menew{These are} supplemented by conditions at the interfaces $\Sigma$ and $\Gamma$ (note that $(\overline{v}_D-v)\cdot n_\Sigma=0$ on $S\times\Sigma$ due to Assumption (A7))
\begin{alignat}{2}
\theta^f&=\theta^{ff}&\quad&\text{on}\ \ S\times\Sigma,\label{d:hom1}\\
-\kappa^{h}\nabla\theta^{f}\cdot n_\Sigma&=-\kappa^f\nabla\theta^{ff}\cdot n_\Sigma&\quad&\text{on}\ \ S\times\Sigma, \label{e:hom1}\\
-\kappa^s\nabla\theta^s\cdot n_\Gamma&=\alpha(\theta^s-\theta^f)&\quad&\text{on}\ \ S\times\Omega^{p}\times\Gamma,
\end{alignat}
as well as initial and \menew{boundary conditions posed on the external boundaries (i.e., $\partial\Omega^{ff}\setminus\Sigma$ and $\partial\Omega^{p}\setminus\Sigma$)}
\begin{alignat}{2}
\theta^{ff}&=\theta_0^{f}&\quad&\text{in}\ \ \Omega^{ff},\\
\theta^f&=\theta_0^{f}&\quad&\text{in}\ \ \Omega^p,\\
\theta^s&=\theta_0^{s}&\quad&\text{in}\ \ \Omega^p\times Y^s,\\
-\kappa^{f}\nabla\theta^{ff}\cdot\nu&=0&\quad&\text{on}\ \ S\times(\partial\Omega^{ff}\setminus\Sigma),\\
-\kappa^{h}\nabla\theta^{f}\cdot \nu&=0&\quad&\text{on}\ \ S\times(\partial\Omega^{p}\setminus\Sigma).
\end{alignat}
\end{subequations}


\subsection{Case (b): Connected solid matrix.}\label{sec:con}
We \me{adopt} a similar approach \menew{as in} the preceding section, with the main distinction being the connectedness of the solid matrix in the porous medium.
As a result, we begin with a slightly different $\e$ scaling and need to be more careful with the interface limits.
While the final homogenized model is structurally different in this scenario, the arguments for the limit $\e\to0$ can \tfnew{mostly} be transferred directly from the previous section.
Again, we \menew{start with the existence of} two-scale limits:
    
\begin{lemma}\label{d:lem:limits}
There are limit functions $\theta^{f}\in L^2(S;H^{1}(\Omega^p))$, $\theta^{ff}\in L^2(S;H^{1}(\Omega^{ff}))$, $\theta^{s}\in L^2(S;H^1(\Omega^p))$, as well as $\theta_1^{f}\in L^2(S\times\Omega^p; H^1_\#(Y))$, $\theta_1^{s}\in L^2(S\times\Omega^p; H^1_\#(Y))$ and $\theta_1^{ff}\in L^2(S\times\Omega^{ff}; H^1_\#(Y))$ such that
\begin{alignat*}{2}
i&)\ \widehat{\theta}_\e^f\overset{2}{\rightharpoonup} \chi^{f}\theta^f ,\qquad &ii&)\  \nabla\widehat{\theta}_\e^f\overset{2}{\rightharpoonup}\chi^{f}\left(\nabla \theta^f+\nabla_y\theta_1^{f}\right),\\
iii&)\ \theta_\e^{ff}\overset{2}{\rightharpoonup}\theta^{ff} ,\qquad &iv&)\  \nabla\theta_\e^{ff}\overset{2}{\rightharpoonup}\nabla \theta^{ff}+\nabla_y\theta_1^{ff},\\
v&)\  \widehat{\theta}_\e^s\overset{2}{\rightharpoonup}\chi^{s}\theta^s ,\qquad &vi&)\  \nabla  \widehat{\theta}_\e^s\overset{2}{\rightharpoonup}\chi^{s}\left(\nabla\theta^s+\nabla_y\theta_1^s\right).
\end{alignat*}
At the interface $\Sigma$, we have continuity of the fluid temperature, that is, $\theta^f=\theta^{ff}$ on $S\times\Sigma$.
In addition, for the interior part of the interface integral ($\Gamma_\e^{int}=\Gamma_\e\cap\Omega^p$), we have
\[
\e\int_{\Gamma_\e^{int}}(\theta_\e^f-\theta_\e^s)\varphi_\e(x,\nicefrac{x}{\e})\di{\sigma}
\to\int_{\Omega^p}(\theta^f-\theta^s)\int_\Gamma\varphi(x,y)\di{\sigma}\di{x}
\]
and for the exterior part, $\Sigma_\e^s$, it holds
\[
\int_{\Sigma_\e^s}(\theta_\e^f-\theta_\e^s)\varphi_\e(x,\nicefrac{x}{\e})\di{\sigma}
\to\int_{\Omega^p}(\theta^f-\theta^s)\int_{\Sigma^s}\varphi(x,y)\di{\sigma}\di{x}
\]
for all admissible test functions $\varphi$.
\end{lemma}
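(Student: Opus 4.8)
The proof parallels that of \cref{lem:limits}; the one structural change is that in Case (b) the solid gradient $\nabla\theta_\e^s$ is bounded in $L^2(S\times\Omega_\e^s)$ with \emph{no} $\e$-weight (since $\gamma=0$ in \cref{ex_est}), so $\widehat\theta_\e^s$ can be treated on the same footing as $\widehat\theta_\e^f$ and produces the usual two-scale corrector decomposition with a macroscopic limit $\theta^s$ that is independent of $y$. First I would combine the $\e$-uniform bounds of \cref{ex_est}, the zero extensions $\widehat{(\cdot)}$, and uniform $H^1$-extension operators for \emph{both} phases (available in Case (b) since $Z_\e^f$ and $Z_\e^s$ are Lipschitz periodic domains; cf.~\cref{ext_op} and~\cite[Theorem 2.2]{HB14}), and then invoke the standard two-scale compactness principle --- both for $L^2$-bounded sequences and for sequences bounded in $L^2$ together with their gradients (see \cite{A92,LNW02}) --- to extract a subsequence realising $(i)$--$(vi)$. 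The free-fluid statements $(iii)$--$(iv)$ are the same results applied on the unperforated subdomain $\Omega^{ff}$, where two-scale convergence reduces to weak $L^2$ convergence for the functions and to the familiar corrector splitting for the gradients.

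\textbf{Continuity across $\Sigma$.} Here I would reuse the $\Sigma$-trace argument from \cref{lem:limits}: let $\mathcal P_\e\theta_\e^f\in L^2(S;H^1(\Omega))$ be the uniform extension of $\theta_\e^f$ from \cref{ext_op}, whose $L^2(S;H^1(\Omega))$-norm is controlled by \cref{esp_estimate}; by a compactness argument (using the $\e$-uniform information on $\partial_t\theta_\e^f$) pass, along a further subsequence, to $\mathcal P_\e\theta_\e^f\to\vartheta$ strongly in $L^2(S\times\Omega)$ and weakly in $L^2(S;H^1(\Omega))$. Since $\mathcal P_\e\theta_\e^f=\theta_\e^f$ on $\Omega_\e^f$ and $\chi_\e^r\twosc\chi^r$, taking two-scale limits on $\Omega^p$ identifies $\vartheta_{|\Omega^p}=\theta^f$, while on the unperforated part $\Omega^{ff}$ one reads off $\vartheta_{|\Omega^{ff}}=\theta^{ff}$ directly; as $\vartheta\in L^2(S;H^1(\Omega))$ has matching traces on either side of $\Sigma$, this yields $\theta^f=\theta^{ff}$ on $S\times\Sigma$.

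\textbf{Interface integrals.} I would split $\Gamma_\e=\Gamma_\e^{int}\cup\Sigma_\e^s$. The interior part $\Gamma_\e^{int}=\Gamma_\e\cap\Omega^p$ blows up like $\e^{-1}$ and is treated exactly as in \eqref{d:surface}: the scaled trace estimate on $\Gamma_\e$ applied to $\theta_\e^f$ and $\theta_\e^s$ together with \cref{esp_estimate} gives $\e\|\theta_\e^r\|_{L^2(S\times\Gamma_\e)}^2\le C$, hence two-scale surface convergence of the traces; since both surface limits, $\theta^f$ and $\theta^s$, are $y$-independent, the claimed convergence follows as in \cite[Theorem 2.39 (iii)]{PS08}. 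The genuinely new object is the exterior part $\Sigma_\e^s=\overline{\Omega_\e^s}\cap\Sigma$: here the support oscillates on the \emph{fixed} $(d-1)$-dimensional surface $\Sigma$, its area converging to $|\Sigma|\,|\Sigma^s|$ rather than blowing up (cf.~(A1)(b)). I would obtain the stated limit by combining (i) convergence of the $\Sigma$-traces of $\theta_\e^f$ (through its $H^1(\Omega)$-extension, which converges strongly on $\Sigma$) and of $\theta_\e^s$ (through its solid $H^1$-extension, equivalently the corrector structure of $\widehat\theta_\e^s$), both of whose limits are $y$-independent and equal to $\theta^f$ and $\theta^s$ on $\Sigma$, respectively, with (ii) a periodic boundary-homogenization argument for $\chi_{\Sigma_\e^s}$, which on $\Sigma$ is the $\e$-periodic rescaling of $\chi^s$ restricted to the face of $Y$ abutting $\Sigma$ and hence generates the weight $\int_{\Sigma^s}\varphi(x,y)\di{\sigma}$.

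\textbf{Main obstacle.} The two delicate points are (a) securing a \emph{uniform} $H^1$-extension --- which forces the axis-parallel-cube hypothesis on $\Omega^p$ and is precisely the content of \cref{ext_op} --- and the ensuing continuity $\theta^f=\theta^{ff}$ on $\Sigma$; and (b) the non-blowing-up interface $\Sigma_\e^s$, which has no analogue in Case (a) and requires two-scale trace control on a fixed lower-dimensional manifold whose support is $\e$-periodic. Everything else is a routine transcription of the Case (a) computations. Once these subsequential limits are established, the uniqueness statement proved in \cref{hom:b} promotes them to convergence of the full sequence.
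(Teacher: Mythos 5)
Your proposal is correct and follows essentially the same route as the paper: two-scale compactness from the $\e$-uniform estimates of \cref{ex_est} for $(i)$--$(vi)$, the extension operator of \cref{ext_op} for the trace matching $\theta^f=\theta^{ff}$ on $\Sigma$, the \cite{PS08}-type surface two-scale limit for $\Gamma_\e^{int}$, and, for $\Sigma_\e^s$, the weak $L^2(\Sigma)$-convergence of $\chi_{\Sigma_\e^s}$ to $|\Sigma^s|$ combined with the $y$-independence of the trace limits. Your write-up is merely more explicit than the paper's (which invokes \cite[Theorem 1.2 and Proposition 1.14]{A92} for the perforated-domain compactness rather than a solid-phase extension operator, and \cite[Theorem 3]{GP14} for the convergence of $\chi_{\Sigma_\e^s}$), but the underlying argument is the same.
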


\begin{proof}
The proof of this lemma is similar to the one of \cref{lem:limits}: the limits follow directly from the $\e$-uniform estimates given via \cref{ex_est}, see, e.g., \cite[\menew{Theorem 1.2 and Proposition 1.14}]{A92}, and the continuity via \tfnew{the extension operator presented in} \cref{ext_op}.

For the exterior part, we note that in our specific geometric setup\footnote{Flat interface $|\Sigma|$ where the $\e$ are chosen in a way to perfectly tile the domain with $\e Y$-cells.} the characteristic function of $\Sigma_\e^s$ converges to $|\Sigma^s|$ weakly in $L^2(\Sigma)$, \cite[Theorem 3]{GP14}.
The limit then follows since $\theta^f,\theta^s$ are $y$-independent.
\end{proof}    

Let $\varphi_0^{r}\in C^\infty(\overline{S\times\Omega})$ and $\varphi_1^r\in C^\infty(\overline{S\times\Omega};C_\#(Y^r))$ ($r=s,f$).
Also, let $\varphi_1^{ff}\in C^\infty(\overline{S\times\Omega};C_\#(Y))$ such that $\varphi_1^f(\cdot,x,y)=\varphi_1^{ff}(\cdot,x,y)$ for all $(x,y)\in\Sigma\times\Sigma_0$.
We also \me{assume} $\varphi_0^{r}(T)=0$, $\varphi_1^r(T)=0$, and $\varphi_1^{ff}(T)=0$.
We take as test functions $\varphi_\e^r\colon S\times\Omega_\e^r\to\R$ defined via 
\begin{align*}
\varphi_\e^{f}(t,x)&=\varphi^{f}_0(t,x)+\e\begin{cases}\varphi_1^{f}(t,x,\nicefrac{x}{\e})& (t,x)\in S\times \Omega_\e^f\cap\Omega^p,\\
\varphi_1^{ff}(t,x,\nicefrac{x}{\e})& (t,x)\in S\times \Omega^{ff}
\end{cases},\\
\varphi_\e^{s}(t,x)&=\varphi_0^s(t,x)+\e\varphi_1^s(t,x,y).
\end{align*}
Due to the equality of $\varphi_1^f$ and $\varphi_1^{ff}$ on $(x,y)\in\Sigma\times\Sigma_0$, we find that $\varphi_\e^f$ is continuous across $\Sigma_\e^f$ thereby satisfying $\varphi_\e^f(t,\cdot)\in H^1(\Omega_\e^f)$.
The limits mostly follow with the same arguments as for their counterparts in the previous section.
For the diffusive flux in the solid medium, we get
\begin{align}
\int_{\Omega_\e^s}\kappa^s\nabla\theta^s_\e\cdot\nabla\varphi_\e^s\di{x}
    &\to\int_{\Omega^p\times Y^s}\kappa^s(\nabla\theta^s+\nabla_y\theta_1^s)\cdot(\nabla\varphi_0^s+\nabla_y\varphi_1^s)\di{(x,y)}.\label{con:eq:limit_heat-5}
\end{align}
Introducing additional cell solutions $\xi^s_i\in H_\#^1(Y^f)$, $i=1,2,3$, as the unique, zero-average solution\tfnew{s} of
\begin{alignat}{2}\label{c:cell1}
\int_{Y^s}(\nabla_y\xi^s_i-e_i)\cdot\nabla_y\phi\di{y}=0\qquad (\phi\in H_\#^1(Y^s)),
\end{alignat}
we again can argue that
\[
\theta_1^s(t,x,y)=\xi^s(y)\cdot\nabla\theta^s(t,x)+r(t,x),
\]
where the function $r$ does not depend to $y\in Y^s$.
With this we can introduce the homogenized heat conductivities $\kappa_h^f,\kappa_h^s\in\R^{3\times3}$ with entries
\begin{equation*}
(\kappa_h^f)_{ij}=\kappa^f\int_{Y^f}(\nabla_y\xi^f_i+e_i)\cdot e_j\di{y},\qquad
(\kappa_h^s)_{ij}=\kappa^s\int_{Y^s}(-\nabla_y\xi^s_i+e_i)\cdot e_j\di{y}.
\end{equation*}
Please note that the definition of $\kappa_h^f$ is identical with its counterpart from \cref{sec:disc} (of course, the value most certainly will be different due to the changes in geometry).
Focusing on the interfacial heat transfer term, we have
\begin{equation*}
\alpha_\e(\theta_\e^f-\theta_\e^s,\varphi_\e^f-\varphi_\e^s)_{\Gamma_\e}
=\e\alpha(\theta_\e^f-\theta_\e^s,\varphi_\e^f-\varphi_\e^s)_{\Omega^p\cap\Gamma_\e}
+\alpha(\theta_\e^f-\theta_\e^s,\varphi_\e^f-\varphi_\e^s)_{\Sigma_\e^s}.
\end{equation*}
The individual parts converge
\begin{equation}
\e\alpha(\theta_\e^f-\theta_\e^s,\varphi_\e^f-\varphi_\e^s)_{\Omega^p\cap\Gamma_\e}\to\alpha|\Gamma|(\theta^f-\theta^s,\varphi_0^f-\varphi_0^s)_{\Omega^p},\label{con:eq:limit_heat-8}
\end{equation}
\begin{equation}
\alpha(\theta_\e^f-\theta_\e^s,\varphi_\e^f-\varphi_\e^s)_{\Sigma_\e^s}\to\alpha|\Sigma^s|(\theta^{ff}-\theta^s,\varphi_0^f-\varphi_0^s)_{\Omega^p},\label{con:eq:limit_heat-9}
\end{equation}
With that, we can state the effective system of partial differential equations
\begin{subequations}\label{system:caseb}
\begin{alignat}{2}
\rho^fc^f\partial_t\theta^{ff}-\dive(\kappa^{f}\nabla\theta^{ff}-\rho^fc^fv\theta^{ff})&=f^{ff}&\quad&\text{in}\ \ S\times\Omega^{ff},\\
|Y^f|\rho^fc^f\partial_t\theta^{f}-\dive(\kappa_{h}^f\nabla\theta^{f}-\rho^fc^f\overline{v}_D\theta^{f})+\alpha|\Gamma|(\theta^f-\theta^s)&=|Y^f|f^f&\quad&\text{in}\ \ S\times\Omega^{p},\\
|Y^s|\rho^sc^s\partial_t\theta^s-\dive(\kappa_h^s\nabla\theta^s)-\alpha|\Gamma|(\theta^f-\theta^s)&=|Y^s|f^s&\quad&\text{in}\ \ S\times\Omega^{p},
\end{alignat}
coupled with interface conditions at $\Sigma$
\begin{alignat}{2}
\theta^f&=\theta^{ff}&\quad&\text{on}\ \ S\times\Sigma,\\
-(\kappa_h^s\nabla\theta^s+\kappa_{h}^f\nabla\theta^{f})\cdot n_\Sigma&=-\kappa^f\nabla\theta^{ff}\cdot n_\Sigma&\quad&\text{on}\ \ S\times\Sigma,\\
-\kappa_h^s\nabla\theta^s\cdot n_\Sigma&=\alpha|\Sigma^s|(\theta^s-\theta^f)&\quad&\text{on}\ \ S\times\Sigma,
\end{alignat}
as well as \menew{conditions on the external boundaries (i.e., $\partial\Omega^p\setminus\Sigma$ and $\partial\Omega^{ff}\setminus\Sigma$)} and initial conditions.
Here, we have used $(\overline{v}_D-v)\cdot n_\Sigma=0$ on $S\times\Sigma$ (Assumption (A7)).
\end{subequations}

\section{Simulations}\label{sec:simulations}
In this section, we present numerical simulations \tfnew{to illustrate and verify various aspects of the homogenized models}, including differences, similarities, and other interesting observations.
We begin by analyzing systems without convection \menew{which allows} us to focus on heat diffusion and energy storage within the different domains.
In subsequent experiments, we incorporate convection by employing a combination of Navier-Stokes and Darcy equations to compute the convective effects.

All simulations are carried out with the FEM library FEniCS \cite{Fenics}.
The software Gmsh \cite{gmsh} is utilized to generate meshes of the various pore structures used in the simulations.
The time dependence is handled with the implicit Euler method.
\tf{In the simulations of the homogenized models, temperature fields are represented by piecewise linear polynomials. For fluid flow computations, the Navier-Stokes system and the Darcy equation are employed, with coupling at the interface $\Sigma$ governed by the Beavers-Joseph conditions \cite{BJ67, Vivette2009, JM00}. Consequently, at the interface a jump of the tangential velocity is anticipated. To accurately capture this behaviour, discontinuous Taylor-Hood elements \cite{Sch2004} of both second and first order are employed for the fluid velocity and pressure. The non--linearity of the Navier-Stokes equation is addressed using an Oseen iteration.}
\tfnew{The concrete boundary conditions of the flow are specified in \cref{ch:simu_with_flow}, when the convection will be considered for the first time.}

\tf{
FEniCS, or finite elements in general, use the weak formulation of the problem. For the homogenized models, the weak formulations are presented in the limiting procedure of \cref{sec:homogenization} and are not repeated here.}
\menew{
Within FEniCS, the space discretization for a connected solid matrix, as outlined in Case (b) (\cref{sec:con}), is accomplished by establishing one unified function space for the fluid temperature, merging \(\theta^f\) and \(\theta^{ff}\), and a distinct function space in \(\Omega^p\) for the solid temperature \(\theta^s\). All heat exchange interface conditions for the model naturally manifest in the weak form via the corresponding interface integrals.}
\menew{Constructing the system for the disconnected matrix, as detailed in Case (a) (\cref{sec:disc}), is more complex due to the domain \(Y^s\) for \(\theta^s\) and the interaction between \(\theta^f\) and \(\theta^s\) via the heat exchange at \(\Gamma\).
Subsequently, we utilize a fixed point algorithm at each time step to determine the current temperature.
To do this, we once again create a unified function space for the fluid temperature and select a discrete set of points \(\{\mathbf{x}_i\}_{i=1}^N \subset \Omega^p\).
Rather than solving (\ref{c:hom1}) over the entire domain \(\Omega^p\), we solve only at the specified points \(\mathbf{x}_i\).
}
This leads to \menew{the} $N$ cell problems
\begin{equation}\label{eq:locall_problem}
  \begin{alignedat}{2}
    \partial_t \theta^s_i - \kappa^s \Delta_y \theta_i^s &= f_i^s &\quad&\text{in}\ \ S \times Y^s, \\
    -\kappa_s \nabla_y \theta^s \cdot n_\Gamma &= \alpha(\theta^s_i - \theta_i^f) &\quad&\text{on}\ \ S \times \Gamma, \\
    \theta^s_i(0)&=\theta^s_{0,i}&\quad&\text{in} \ \ Y^s
  \end{alignedat}
\end{equation}
with $f^s_i=f^s(\cdot, \mathbf{x}_i, \cdot)$, $\theta_i^f=\theta^f(\cdot, \mathbf{x}_i)$, and $\theta^s_{0,i}=\theta^s_0(\mathbf{x}_i,\cdot)$.
A straightforward choice for the positions $\{\mathbf{x}_i\}_{i=1}^N$ is the set of mesh vertices, or coordinates of the quadrature formula, belonging to $\Omega^p$.

\tflast{An alternative approach involves selecting only a subset of the mesh vertices and then interpolating the resulting cell solutions.
This reduces the number of local equations and therefore the overall computational demand at the cost of additional \textit{consistency errors}, see also \cref{remark_consistency_error}.
We point to \cite[Section 6.2]{Meier_diss} where this approach is also followed.
In our specific scenarios, we found that relatively small sets of points together with linear interpolation is already sufficient to accurately capture the linear coupling between solid and fluid temperatures.
The specific coordinates and the total number of points for each simulation are dependent on the scenario and mentioned in the ensuing sections.
In more complicated cases, e.g., nonlinear heat exchange or strong convection, linear interpolation might now work.
}

we contrasted cell positions at each vertex against those on grids of different sizes and found that using a limited number of specified cell positions, instead of solving \cref{eq:locall_problem} at each mesh vertex, is sufficient in our simulation studies.
The specific coordinates and the total number of points for each simulation are mentioned in the ensuing sections.

We define ${\Theta^s := \Theta^s[\theta^s_1, \dots, \theta^s_N] : S\times \Omega^p  \times Y^s \to \R}$, such that $\Theta^s(\cdot, \mathbf{x}_i, \cdot) = \theta^s_i$ for all $i=1,\dots,N$, as the representation of the temperatures $\{\theta^s_i\}_{i=1}^N$ on $\Omega^p$.
Additionally, since $\theta^f$ is independent of $y$, the integral in (\ref{b:hom1})
reduces to 
\begin{equation*}
    \int_\Gamma (\theta^f - \Theta^s) = |\Gamma| \theta^f - \int_\Gamma \Theta^s.
\end{equation*}
At each time step we apply \tfnew{Algorithm} \ref{algo:iterative_scheme}, to solve the problem with the disconnected matrix.

\begin{algorithm}[ht]
	\caption{Iterative scheme for the disconnected model}
    \label{algo:iterative_scheme}
	\KwIn{Information $\theta^{ff}_n, \theta^f_n, \{\theta^s_{i,n}\}_{i=1}^N$, at time step $t_n$, and tolerance $\tau$.}
	Do the time step  $t_n \to t_{n+1}$ of \cref{a:hom1,b:hom1}, by replacing $\theta^s$ with $\Theta^s_n[\theta^s_{1,n}, \dots, \theta^s_{N,n}]$, to compute $\theta^{ff}_{0, n+1}, \theta^{f}_{0, n+1}$.\\
    Set $k=0, e_k \geq \tau$. \\
	\While{$e_k \geq \tau$}
    	{\For{$i=1$ \KwTo $N$}{
            Use $\theta^f_{k, n+1}$ to do the time step $t_n \to t_{n+1}$ of the system (\ref{eq:locall_problem}) to compute $\theta_{i, k, n+1}^s$.
        }
        Redo the time step of (\ref{a:hom1})-(\ref{b:hom1}) with $\{\theta_{i, k, n+1}^s\}_{i=1}^N$, denote the solutions by $\theta^{ff}_{k+1, n+1}, \theta^f_{k+1, n+1}$.\\
        Compute $e_{k+1} = \sqrt{\|\theta^{ff}_{k+1, n+1}-\theta^{ff}_{k, n+1}\|_{L^2(\Omega^{ff})}^2 + \|\theta^{f}_{k+1, n+1}-\theta^{f}_{k, n+1}\|_{L^2(\Omega^{p})}^2}$. \\
        Set k = k + 1.}
    \KwRet{$\theta^{ff}_{k, n+1}, \theta^{f}_{k, n+1}$ \text{and} $\{ \theta_{i, k-1,n+1}^s\}_{i=1}^N$.}
\end{algorithm}
\begin{remark}\label{rem:algo_conv}
        \tflast{When \(\{\mathbf{x}_i\}_{i=1}^N \subset \Omega^p\) is equal to the set of mesh vertices,} Algorithm \ref{algo:iterative_scheme} converges if $\Delta t > 0$ is chosen \tfnew{such} that $\tfrac{|Y^f|}{\Delta t} > \rho^f c^f\|\widetilde{v}\|_{L^\infty(S \times \Omega)}^2 \tfrac{1}{2\delta} $. 
        Here, $\widetilde{v} = \chi_{\Omega^{ff}}v + \chi_{\Omega^{p}}\Bar{v}_D$, $0 < \delta < 2 \lambda$  and $0<\lambda<\kappa^f$ \menew{satisfies}
        \begin{equation*}
            \lambda \| \zeta \|^2 \leq \kappa^f_h \zeta\cdot\zeta \quad \text{for all } \zeta \in \R^d.
        \end{equation*}
        Additionally, at iteration step $k\geq 2$, the estimate
        \begin{equation}\label{eq:fixpoint_estimate}
            e_{k} \leq C \left( \frac{\alpha |\Gamma|}{\alpha |\Gamma| + \tfrac{\rho^f c^f |Y^f|}{\Delta t} - (\rho^f c^f)^2 \|\widetilde{v}\|_{L^\infty(S \times \Omega)}^2 \tfrac{1}{2\delta}} \right )^{k-1.5} \|\theta^{f}_{1, n+1}-\theta^{f}_{0, n+1}\|_{L^2(\Omega^{p})},
        \end{equation}
        holds with a constant \tfnew{$C>0$ independent of $k$}. The convergence proof follows with an argument similar to the one of \cref{hom:a} and is shown in \cref{sec:algo_conv}.
        
    To guarantee convergence, the expression inside the brackets in \cref{eq:fixpoint_estimate} has to be smaller than one, which gives a condition for the time step size $\Delta t$ \menew{relative} to the fluid velocity.
    \menew{Also}, larger values of $\alpha$, in case all other parameters are fixed, lead to a slower convergence. \tfnew{This slower convergence, for increasing heat exchange $\alpha$, was also observed in our numerical simulations.
    Lastly, the diffusion term was ignored in the convergence proof.
    If an estimate for the diffusion term in terms of the solution $\theta$, like the Poincar\'e inequality, is available \tflast{the influence of the convection could be reduced and the iterative scheme would also converge for larger time steps $\Delta t$.}}
\end{remark}

For more \menew{intricate} problems, particularly when \tfnew{numerous} local problems (\ref{eq:locall_problem}) have to be solved, \menew{switching} to the memory representation mentioned in \cref{hom:b2} \menew{could reduce} the computational effort.
Moreover, the \tfnew{local} problems can also be solved in parallel since they are independent.

Before we present the simulation results, we \tfnew{first detail the specifics of} our numerical experiments.
We set $\Omega^{ff} = [0, 2] \times [0.5, 1] \times [0, 2]$, $\Omega^p = [0, 2] \times [0, 0.5] \times [0, 2]$ and $S=[0, 20]$.
For the space discretization, we take $H=0.025$, and for the time stepping with the Euler method, $\tf{\Delta t}=0.1$.
The tolerance in our iteration scheme is set at $\tau=10^{-5}$.
Additionally, we assign values $\kappa^f=0.1$, $\kappa^s=0.4$, and $\alpha=0.1$, noting that the value of $\alpha$ will later be varied.
Both densities and heat capacities are normalized to 1.
Various pore structures are considered in our numerical experiments, as depicted in \cref{tab:data_connected,tab:data_non_connected}.
Most geometries are constructed such that either $|Y^f|$ or $|\Gamma|$ are consistent between different cases.
The effective parameters were computed by numerically solving the corresponding cell problems.
To validate the accuracy of these effective parameters, the computations were repeated with a step width of $H=0.0125$.
The relative difference was consistently below 2\,\% for all examples.

\begin{table}[htp]
    \centering
    \footnotesize{\begin{tabular}{c|c|c|c}
         Name tag & (DC1) & (DC2) & (DC3) \\
         \hline
         &&& \\[\dimexpr-\normalbaselineskip+2pt]
         \includegraphics[width=18mm, height=18mm]{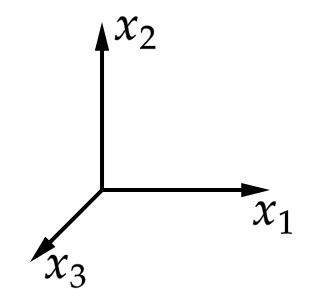}
         & 
         \includegraphics[width=20mm, height=20mm]{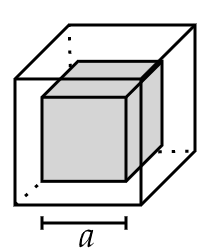}
         &
         \hspace{0pt}
         \includegraphics[width=20mm, height=20mm]{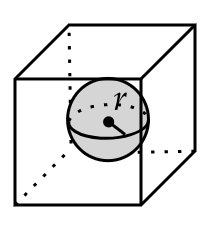}
         &
         \hspace{0pt}
         \includegraphics[width=20mm, height=20mm]{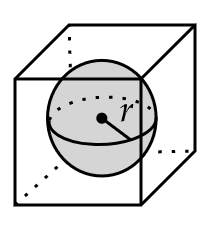}
         \\
         \hline
         &&& \\[\dimexpr-\normalbaselineskip+2pt]
         Geometry info & $a = 0.6764$ & $ r = 0.4196 $ & $r = 0.4673$ \\
         \hline&&& \\[\dimexpr-\normalbaselineskip+2pt]
        $|Y^f|$ & 0.6906 & 0.6906 & 0.5726 \\
        \hline
        $|\Gamma|$ & 2.7451 & 2.2125 & 2.7451 \\
        \hline
        &&& \\[\dimexpr-\normalbaselineskip+2pt]
        $\kappa^h / \kappa^f$ & $0.586 I$ & $0.596 I$ & $0.459 I$ \\
        \hline &&& \\[\dimexpr-\normalbaselineskip+2pt]
        $K$ & $0.008 I$ & $0.01 I$ & $0.005 I$
    \end{tabular}}
   \caption{Considered pore structures for the disconnected case. If only one side length is specified, the geometry is a cube. Here, $K$ denotes the permeability used in \cref{ch:simu_with_flow}.}
    \label{tab:data_connected}
\end{table}
\begin{table}[H]
    \centering
    \footnotesize{
    \begin{tabular}{c|c|c|c}
        Name tag & (C1) & (C2) & (C3) \\
        \hline
         \includegraphics[width=18mm, height=18mm]{Images/Axis.png}
         & 
         \includegraphics[width=24mm, height=24mm]{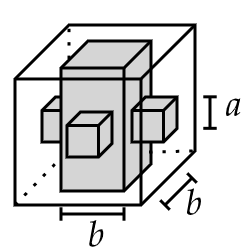}
         &
         \hspace{5pt}
         \includegraphics[width=24mm, height=24mm]{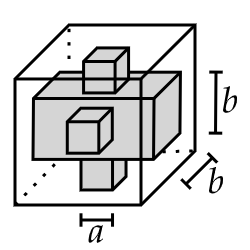}
         &
         \hspace{5pt}
         \includegraphics[width=24mm, height=24mm]{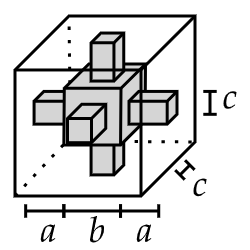}
         \\
         \hline
         &&& \\[\dimexpr-\normalbaselineskip+2pt]
         Geometry & $a = 0.2525,$ & $a = 0.2525,$& $a = \tfrac{1-b}{2}, c=0.3,$ \\
         info & $b=0.495$ & $b=0.495$& $b=0.5813$ \\
         \hline&&& \\[\dimexpr-\normalbaselineskip+2pt]
         $|Y^f|$ & 0.6906 & 0.6906 & 0.6906 \\
        \hline
        $|\Gamma|$, $|\Sigma|$ & 2.7451, 0.245 & 2.7451, 0.0638& 2.9948, 0.09 \\
        \hline
        &&& \\[\dimexpr-\normalbaselineskip+2pt]
        $\kappa^f_h / \kappa^f$  &
            \hspace{-4pt}$\begin{pmatrix}
                    0.501 & 0.0 & 0.0 \\
                    0.0 & 0.624 & 0.0 \\
                    0.0 & 0.0 & 0.501 
                 \end{pmatrix}$ \hspace{-8pt}
        &   \hspace{-8pt} $\begin{pmatrix}
                    0.624 & 0.0 & 0.0 \\
                    0.0 & 0.501 & 0.0 \\
                    0.0 & 0.0 & 0.501 
                    \end{pmatrix}$ \hspace{-8pt}
        &  $0.541 I$
        \\
        \hline
        &&& \\[\dimexpr-\normalbaselineskip+2pt]
        $\kappa^s_h / \kappa^s$  &
             \hspace{-10pt} $\begin{pmatrix}
                    0.093 & 0.0 & 0.0 \\
                    0.0 & 0.256 & 0.0 \\
                    0.0 & 0.0 & 0.093 
                 \end{pmatrix}$\hspace{-8pt}
        &   \hspace{-11pt}  $\begin{pmatrix}
                    0.256 & 0.0 & 0.0 \\
                    0.0 & 0.093 & 0.0 \\
                    0.0 & 0.0 & 0.093 
                    \end{pmatrix}$\hspace{-8pt}
        & $0.139 I$  
        \\
        \hline
        &&& \\[\dimexpr-\normalbaselineskip+2pt]
        $K$  &
            \hspace{-10pt} $\begin{pmatrix}
                    0.008 & 0.0 & 0.0 \\
                    0.0 & 0.013 & 0.0 \\
                    0.0 & 0.0 & 0.008 
                 \end{pmatrix}$\hspace{-8pt}
        &   \hspace{-11pt}  $\begin{pmatrix}
                    0.013 & 0.0 & 0.0 \\
                    0.0 & 0.008 & 0.0 \\
                    0.0 & 0.0 & 0.008 
                    \end{pmatrix}$\hspace{-8pt}
        &   $0.008 I$
    \end{tabular}}
    \caption{Considered pore structures for the connected case.}
    \label{tab:data_non_connected}
\end{table}

\subsection{Simulations without convection}\label{ch:simu_without_flow}
\paragraph{Stationary temperature profile.}\label{ch:stationary_profile}
We \menew{begin} by considering a stationary case to validate our model \tfnew{and implementation}, \menew{particularly with respect} to the varying diffusion values \menew{within} different subdomains.
In this section, we set all \tfnew{source} terms to zero and employ the Dirichlet conditions: $\thetaff = 0, \text{if } x_2 = 1 \text{ and } \theta^f = 1, \text{if } x_2 = 0$.
\menew{For} the connected model, we also enforce $-\kappa^s_h \nabla\theta^s \cdot \nu = \alpha |\Sigma|(1-\theta^s), \text{if } x_2 = 0$.
With these conditions, we obtain a one dimensional profile along the $x_2$ axis.
Therefore, we solve the problem given by \tfnew{\cref{eq:locall_problem}} only at the points $\{\mathbf{x}_i\}_{i=1}^{11}$, \menew{where} $\mathbf{x}_{i,1}=\mathbf{x}_{i,3}=1$ and $\mathbf{x}_{i,2}= \tfrac{i-1}{20}$, \menew{using linear interpolation between these points}.

The computed temperature is shown in \cref{fig:temp_sationary}.
In the disconnected case, \menew{$\Theta^s$ is also a function of \(y\). As a consequence, there is no unique temperature of the solid that can be illustrated along the \(x_2\) axis.
Hence}, we display the averaged temperature at point $x$
\begin{equation}\label{eq:average_temp_solid}
    \overline{\Theta}^s(x) = \frac{1}{|Y^s|} \int_{Y^s} \Theta^s(x, y) \di{y}.
\end{equation}
The expected \tf{slope change} of the fluid temperature \me{profile} can be seen in both models. 
One noticeable difference is the solid temperature profile:
In the disconnected case, fluid and solid temperature are \menew{identical, but} they differ in the connected \menew{model}.
\menew{This discrepancy arises due to heat transfer within the solid and heat exchange at the external boundaries ($\partial\Omega^{ff}\setminus\Sigma$ and $\partial\Omega^p\setminus\Sigma$) as well as on $\Sigma$ in the connected model.}

To \tfnew{illustrate} our homogenization results, simulations were also carried out for the resolved pore model (\ref{problem1}) using discontinuous linear finite elements.
\tfnew{Within $\Omega^{ff}$, the temperature disparity between the homogenized and resolved models, for both cases, is negligible.
In the solid domain $\Omega^p$, we observe that the effective fluid and solid temperatures pass through the temperature jumps of the resolved model.
This is an expected behavior of the homogenized solutions.
Minor variations are noted around the interface \(\Sigma\), which we attribute to the chosen position of \(\Sigma\), reminiscent of scenarios involving fluid flow over porous domains \cite{ER21}.}

Overall, we conclude that the simulations of this problem confirms our homogenization model in terms of effective heat conduction and exchange. 
Smaller $\varepsilon$ values could not be resolved on our \tfnew{workstation due to the intensive computational requirements, especially for mesh refinement studies concerning accuracy.
This limitation is why we restricted our comparison to this specific scenario and did not extend it to subsequent simulations.}   


\begin{figure}[ht]
    \centering
    \includegraphics[width=0.9\linewidth]{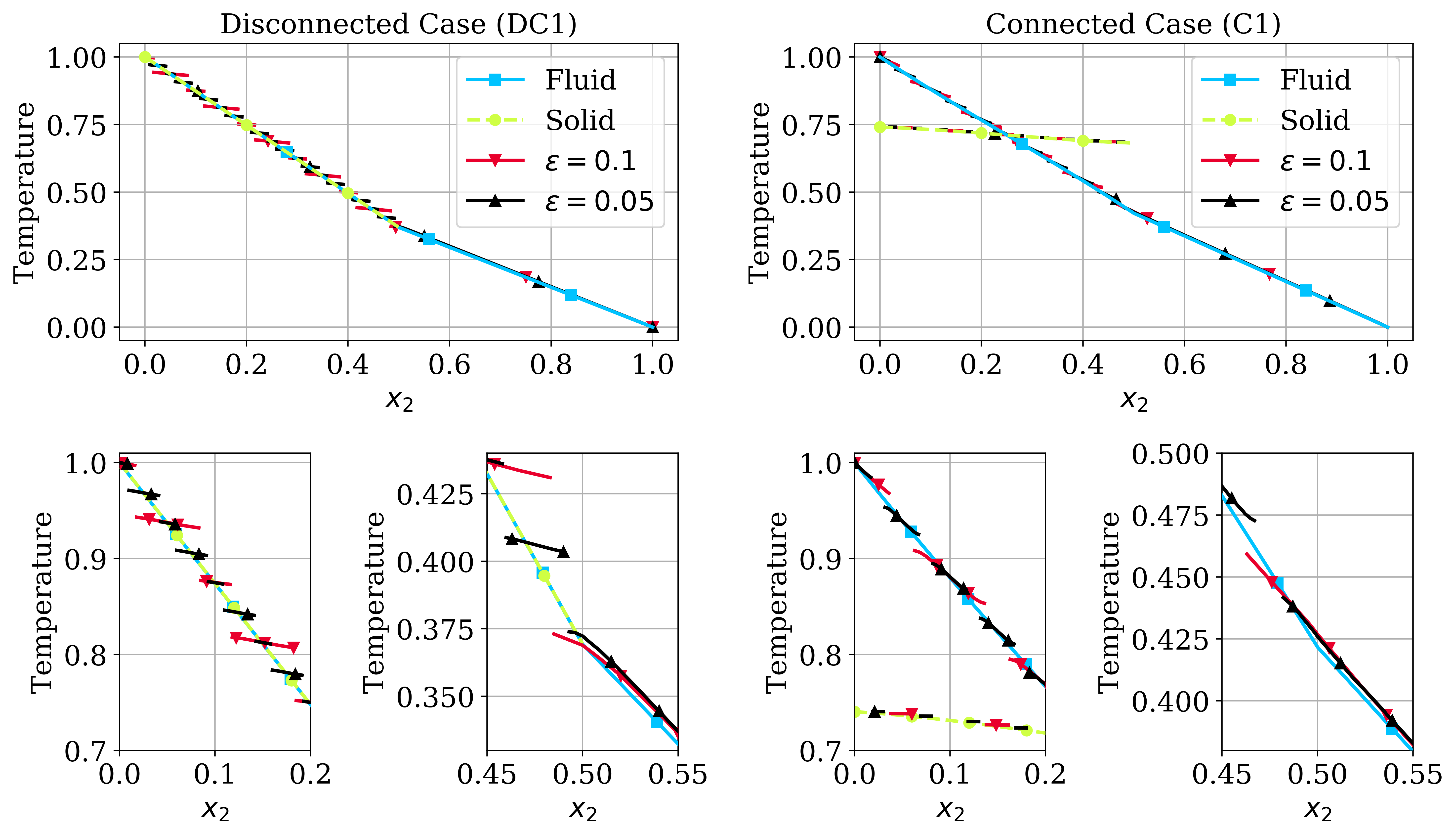}
    \vspace*{-12pt}
    \caption{\tf{Fluid and solid temperature fields along the $x_2$ axis for both homogenized models. 
    The pore structures mentioned in \cref{tab:data_connected,tab:data_non_connected} are used. 
    For comparison, the temperature profile of model (\ref{problem1}) with resolved pores is shown. 
    The solutions for (DC1) and (C1) are plotted along different lines, which cross both the fluid and the solid domain in each case. Other line positions yield similar comparisons results.}}
    \label{fig:temp_sationary}
\end{figure}
\paragraph{Influence of the pore structure.}
\tfnew{Motivated by our goal to simulate the temperature dynamics of} a grinding process, wherein heat generation occurs due to friction between the grinding wheel and material, we consider a \tfnew{heat source} at the interface:
\begin{equation*}
    f_\Sigma(t, x) = \begin{cases}
        1, & \text{if } t \leq 10.0 \text{ and } x \in \Sigma \\
        0, & \text{otherwise}
    \end{cases}.
\end{equation*}
\menew{This step function in time also helps} demonstrate the energy balancing process.
In the disconnected case, the production $f_\Sigma$ can be \menew{directly} added to the interface \cref{e:hom1}.
In the case of a connected structure, we \menew{partition} the \tfnew{source term} between fluid and solid \menew{using} the following conditions:
\begin{alignat*}{2}
-\kappa^f\nabla\theta^{ff}\cdot n_\Sigma&= - \kappa_{h}^f\nabla\theta^{f}\cdot n_\Sigma + (1-|\Sigma^s|)f_\Sigma + \alpha|\Sigma^s| (\theta^s-\theta^f)&\quad&\text{on}\ \ S\times\Sigma,\\
-\kappa_h^s\nabla\theta^s\cdot n_\Sigma&=\alpha|\Sigma^s|(\theta^s-\theta^f) - |\Sigma^s|f_\Sigma&\quad&\text{on}\ \ S\times\Sigma.
\end{alignat*}
In the weak formulation, the source term $f_\Sigma$ appears as an integral over $\Sigma$.
Homogeneous Neumann conditions are applied at all external boundaries. 

These condition \menew{result in} a one dimensional temperature profile.
For this reason, we use the same points $\{\mathbf{x}_i\}_{i=1}^{11}$ as in the previous section.
For comparison, we \menew{assess}, at different \menew{points in time} $t$, the temperature along the $x_2$ axis and the \tf{heat} energy \tf{(enthalpy)} inside the subdomains, which correspond to
\begin{equation}\label{eq:heat_energy} 
       E^{ff}(t) = \int_{\Omega^{ff}} \tf{\rho^f c^f} \theta^{ff}(t, x) \di{x}, \quad
    E^{r}(t) = |Y^r| \int_{\Omega^{p}}\rho^r c^r \theta^{r}(t, x) \di{x} \quad (r=f,s).
\end{equation}
The energy and temperature are presented in \cref{fig:temp_pore_structures} and \ref{fig:energy_pore_structures}.
For pore geometries (DC1) and (DC2), where the porosity $|Y^f|$ is \menew{identical}, the temperature and energy profiles \menew{closely align}.
For (DC2), the solid is \menew{slightly} cooler \menew{as a consequence of the smaller interface} $\Gamma$.
The structure (DC3) yields more \menew{pronounced} deviations, \menew{mainly} because of the difference in $|Y^f|$.

The models for connected and disconnected matrices also produce noticeable differences, even \menew{with identical} geometry parameters.
\menew{This is evident} when comparing the results for (DC1) and (C1).
\menew{The differences arise} due to the interface production on the solid and \menew{the potential} for heat diffusion \menew{in the solid phase} in the connected case.
Varying the sizes of $|\Gamma|$ and $|\Sigma|$ \tfnew{produces} expected results, since smaller values correspond to slower heat transfer into the solid.

\tfnew{To further validate our numerical approach, we verify energy conservation}.
\menew{Given that our} system is isolated and $f_\Sigma$ is known, the sum of the terms in \cref{eq:heat_energy} should match the integral over $f_\Sigma$.
\menew{Pertinently}, all simulation cases \menew{exhibit a discrepancy} less than 0.25\% from the expected value at every time step.
\begin{figure}[H]
    \centering
    \includegraphics[width=\linewidth]{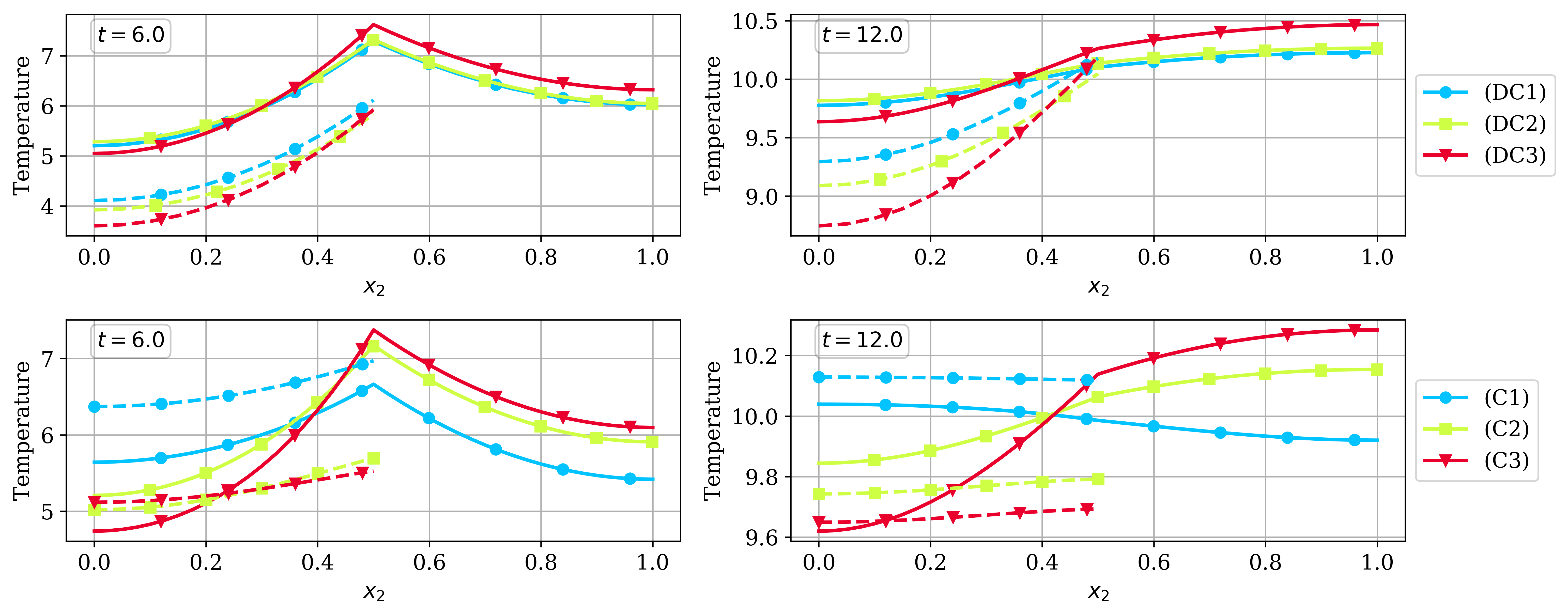}
    \vspace*{-25pt}
    \caption{\tfnew{Temperature profiles, along the $x_2$ axis, for different pore structures and at two distinct snap shots in time. The dashed lines depict the temperature of the solid. \menew{The top row displays} the disconnected case, \menew{while the bottom illustrates} the connected one.}}    
    \label{fig:temp_pore_structures}
\end{figure}
\vspace{-10pt}
\begin{figure}[H]
    \centering
    \includegraphics[width=\linewidth]{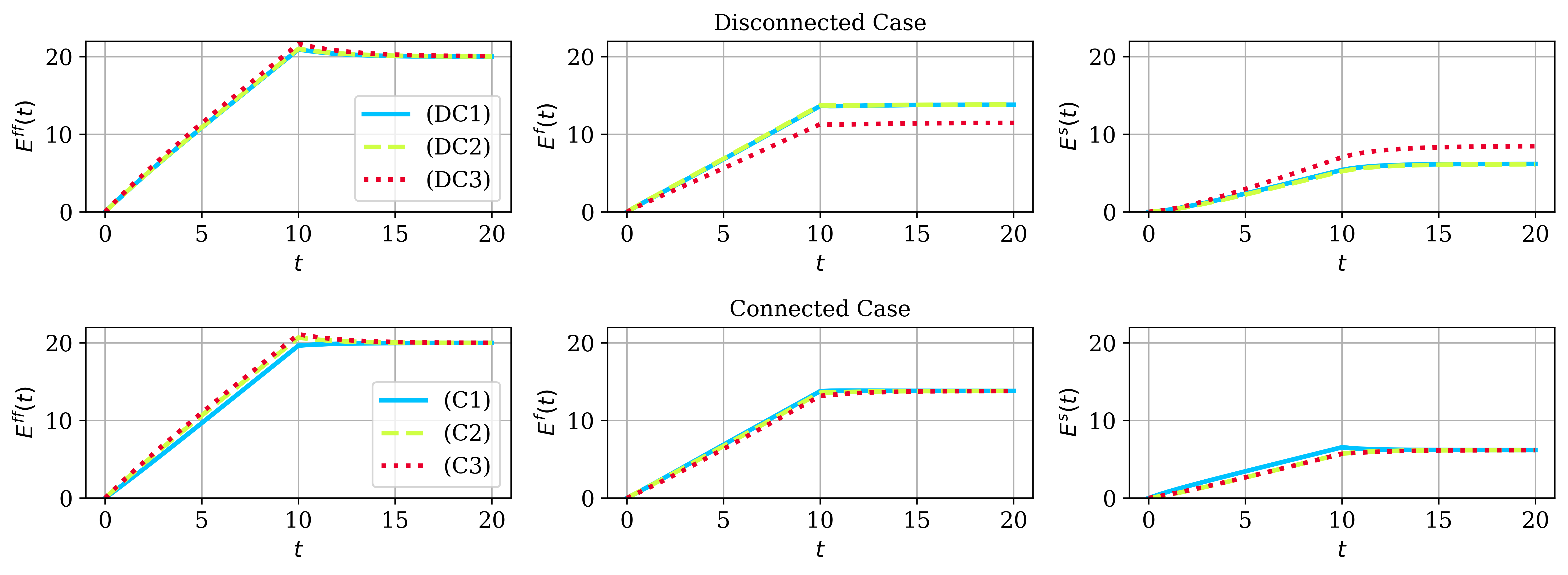}
    \vspace*{-25pt}
    \caption{\tfnew{Heat energies (\ref{eq:heat_energy}) \menew{within} different subdomains. All introduced pore structures in \cref{tab:data_connected,tab:data_non_connected} are represented.}}
    \label{fig:energy_pore_structures}
\end{figure}

\paragraph{Transition from connected to disconnected case.}\label{ssec:transition}
One question \me{that arises is} whether the results for the connected matrix \tfnew{do} approach those of the disconnected case when the connected pore structure transitions to a disconnected one.
To examine this aspect, we utilize the pore geometries (DC1) and (C3) reducing the side length $c$ while increasing $b$ and keeping $|Y^f|$ fixed.
The parameters for different values of $c$, are listed in \cref{tab:limit_c}.

\menew{For} comparison, we compute the temperature difference between both models \menew{using} the $L^2$-norm at each time step.
For the solid temperature, we again \menew{employ} the average (\ref{eq:average_temp_solid}).
The results are shown in \cref{fig:transition}, where the expected trend of the two models yielding comparable solutions for $c\to 0$ is clearly visible.
\menew{Interestingly, even setting \(c=0\) in model \ref{sec:con} yields plausible results}. 
The \menew{minor} relative difference of approximately $10^{-3}$ \menew{mainly stems} from numerical errors \menew{related to} energy conservation.

\begin{table}[H]
    \centering
    \begin{tabular}{c|c|c|c|c|c}
         $c$& $b$ & $|\Gamma|$ & $|\Sigma|$ & $\kappa^f_h/\kappa^f$ & $\kappa^s_h/\kappa^s$\\
         \hline
         0.2  & 0.6437 & 3.1012 & 0.04 & \tf{$0.564 I$} & \tf{$0.078 I$} \\ \hline
         0.1  & 0.6691 & 3.0232 & 0.01 & \tf{$0.581 I$}  & \tf{$0.025 I$} \\ \hline
         0.05 & 0.6746 & 2.9108 & 0.0025 & \tf{$0.584 I$} & \tf{$0.006 I$} \\ \hline
         0.0  & 0.6764 & 2.7451 & 0.0 & \tf{$0.586 I$}  & \tf{-}
    \end{tabular}
    \caption{Parameters of (C3) for different side lengths $c$.}
    \label{tab:limit_c}
\end{table}
\begin{figure}[h]
    \centering
    \includegraphics[width=\linewidth]{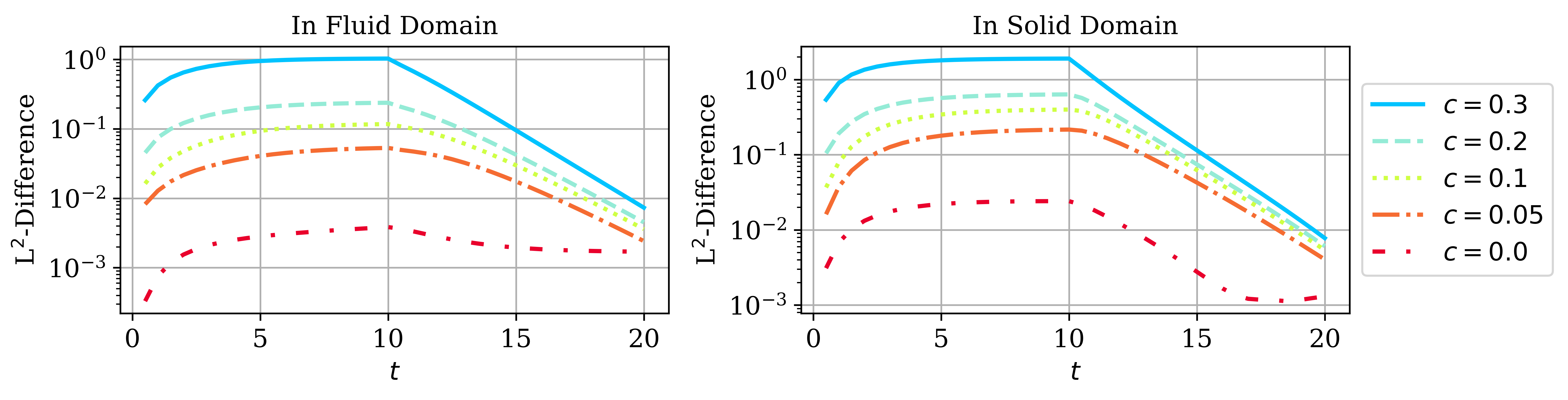}
    \vspace*{-16pt}
    \caption{\tfnew{The $L^2$-difference in temperature between the solution of the connected and disconnected cases.
    \menew{We contrast} pore geometry (DC1) with geometry (C3) for different values of $c$.
    \menew{The left side showcases} the fluid domain $\Omega^{ff}$, \menew{while} the right \menew{presents} differences inside $\Omega^{p}$.}}
    \label{fig:transition}
\end{figure}
\subsection{Simulations with convection}\label{ch:simu_with_flow}
Finally, we \menew{integrate} fluid flow in the simulation studies.
\menew{Drawing inspiration from engineering applications where the fluid functions as a coolant}, we \me{specified} an inflow temperature and velocity.
Additionally, we \menew{disregard} the influence of the temperature on the flow (e.g., buoyancy) and \menew{assume} a stationary flow profile established \menew{prior} to heating.
An essential parameter for the Darcy equation is the permeability tensor $K$.
For a given pore structure, the permeability can be computed via the solutions of problems inside the cell $Y^f$ \cite{H97}.
\tfnew{The permeability matrices for our chosen geometries} are listed in \cref{tab:data_connected,tab:data_non_connected}.

\menew{To streamline both the model and our discussion}, we consider a two-dimensional flow and temperature profile \menew{invariant} in the $x_3$ direction.
For the inflow, we \menew{prescribe} Dirichlet conditions $u^{ff} = (16(x_2-0.5)(1-x_2), 0, 0)$ and $\thetaff = 0$, if $x_1=0$  and $x_2 \geq 0.5$.
On the opposite boundary at $x_1=2$  and $x_2 \geq 0.5$, we apply a free outflow condition, $(\tfrac{\mu}{2}(\nabla u^{ff} + (\nabla u^{ff})^T) - p^{ff}I) \cdot \nu = 0$, and only allow convective heat transport, $\kappa^f \nabla \thetaff \cdot \nu = 0$.
At all other boundaries a no-slip condition for the flow and homogeneous Neumann condition for the temperature are used.
The viscosity is set to $\mu = 1$.
\menew{Finally, an oscillating heat source, defined by \(\Tilde{f}_\Sigma(t, x) = f_\Sigma(2(t \mod{10}), x)\), is applied over the extended time interval \(S=[0, 50]\).}

\begin{remark}
    \menew{In this simulation, only flow parallel to the interface is considered.
    Although our heat exchange model applies to general cases with arbitrary flow directions, non-parallel flows might necessitate modifications to the Beavers–Joseph conditions, incorporating additional terms.
    For further details, see \cite{ER21} and the references therein.}
\end{remark}

Again, both connected and disconnected models are simulated, \menew{specifically} the geometries (C1) and (DC1).
Since the temperature profile will also vary in the $x_1$ direction, we solve the cell problems (\ref{eq:locall_problem}) at the points $\{\mathbf{x}_{ij}\}_{i,j=1}^{11}$, with $\mathbf{x}_{ij,1}= \tfrac{j-1}{5}, \mathbf{x}_{ij,2}= \tfrac{i-1}{20}$ and $\mathbf{x}_{ij,3}=1$.

The resulting velocity and temperature profiles, \menew{captured} at $t=45$, for both model types are presented in \cref{fig:flow_and_temperature}.
\menew{While fluid velocity variations between the two models are minimal, especially \menew{within} $\Omega^{ff}$, the temperature profiles differ markedly.}
In the \menew{connected case}, the solid is hotter than in the disconnected case. 
\menew{Here, the connected geometry facilitates heat diffusion within the solid, yielding a pronounced counter-effect against heat convection within the fluid of \(\Omega^p\).}

We also study the \menew{effects} of the heat exchange \tfnew{parameter} $\alpha$.
\menew{As anticipated,} a larger $\alpha$ leads to similar temperature profiles in fluid and solid, \menew{while} smaller values may allow for heat to accumulate in the solid domain. 
The corresponding energy curves for different $\alpha$ values are displayed in \cref{fig:energy_for_alpha}.
\menew{Particularly in the connected system, the energies precisely mirror the oscillations of the heat source \(\Tilde{f}_\Sigma\).}
For small values of $\alpha$ the solid can heat up considerably due to the production on $\Sigma$ \menew{thereby diminishing the cooling effect of the coolant.}
For the disconnected system, the solid is \menew{generally} a lot cooler than \tfnew{in} the connected model.
Interesting are the cases $\alpha=0.01$ or $0.1$, where the oscillations of heat inside the solid are lagging behind the oscillation of $\Tilde{f}_\Sigma$ and the energy inside the fluid.
Here, we can see the memory effect present in the one-temperature model given in \cref{hom:b2}.
For large $\alpha$ both models exhibit similar results, since \menew{a increased} heat exchange decreases the ability to store heat inside the solid and, in the connected model, also \menew{dampens} the impact of heat diffusion in the solid.

\begin{figure}[H]
     \centering
     \begin{subfigure}[b]{0.32\textwidth}
         \centering
         \includegraphics[width=\textwidth]{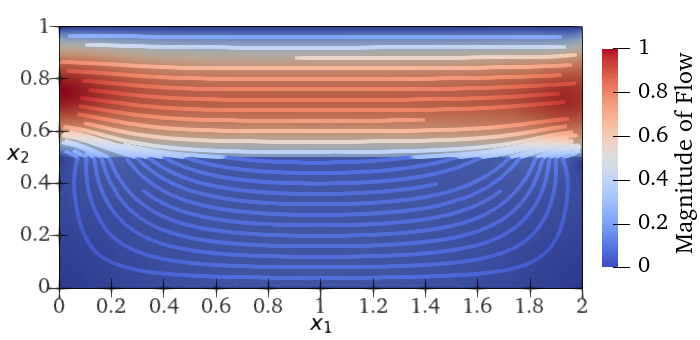}
     \end{subfigure}
     \hfill
     \begin{subfigure}[b]{0.32\textwidth}
         \centering
         \includegraphics[width=\textwidth]{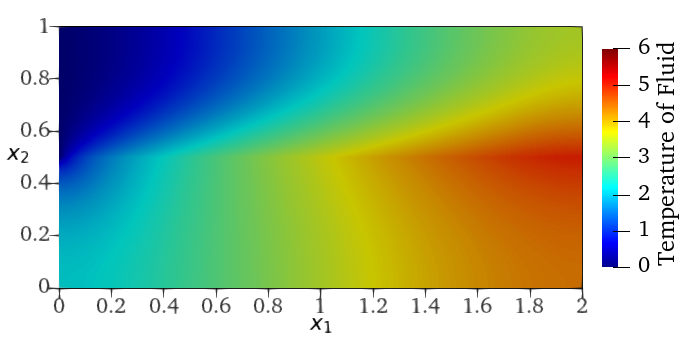}
     \end{subfigure}
     \hfill
     \begin{subfigure}[b]{0.32\textwidth}
         \centering
         \includegraphics[width=\textwidth]{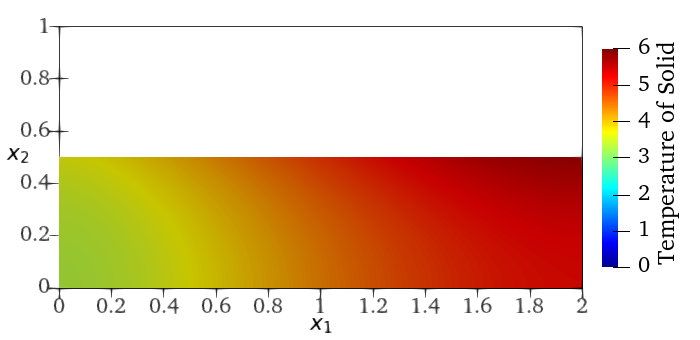}
     \end{subfigure} \\
     \begin{subfigure}[b]{0.32\textwidth}
         \centering
         \includegraphics[width=\textwidth]{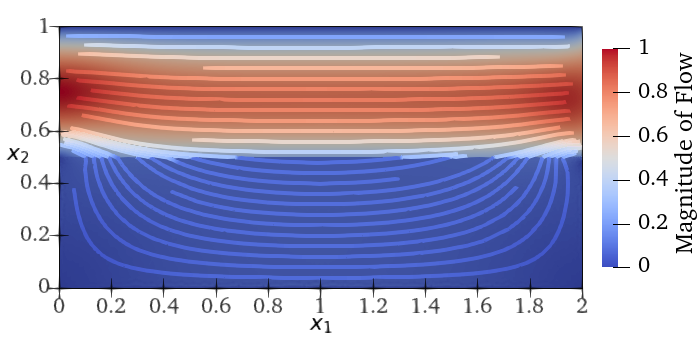}
     \end{subfigure}
     \hfill
     \begin{subfigure}[b]{0.32\textwidth}
         \centering
         \includegraphics[width=\textwidth]{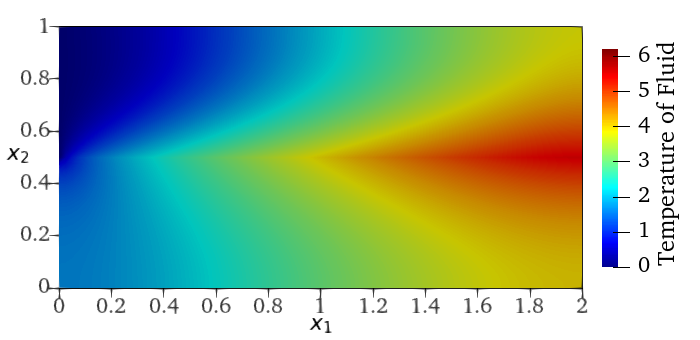}
     \end{subfigure}
     \hfill
     \begin{subfigure}[b]{0.32\textwidth}
         \centering
         \includegraphics[width=\textwidth]{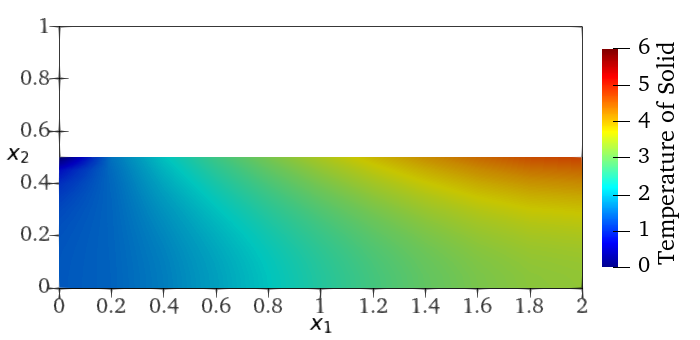}
     \end{subfigure}
   \caption{Cross-section of the simulation results. Left: magnitude and streamlines of the flow. Middle and right: the (average) temperature profile at $t=45$ and $\alpha=0.1$ for the fluid and solid domain respectively. The top row corresponds to the connected system the bottom one to the disconnected system.}
    \label{fig:flow_and_temperature}
\end{figure}

\begin{figure}[!h]
    \centering
    \includegraphics[width=\linewidth]{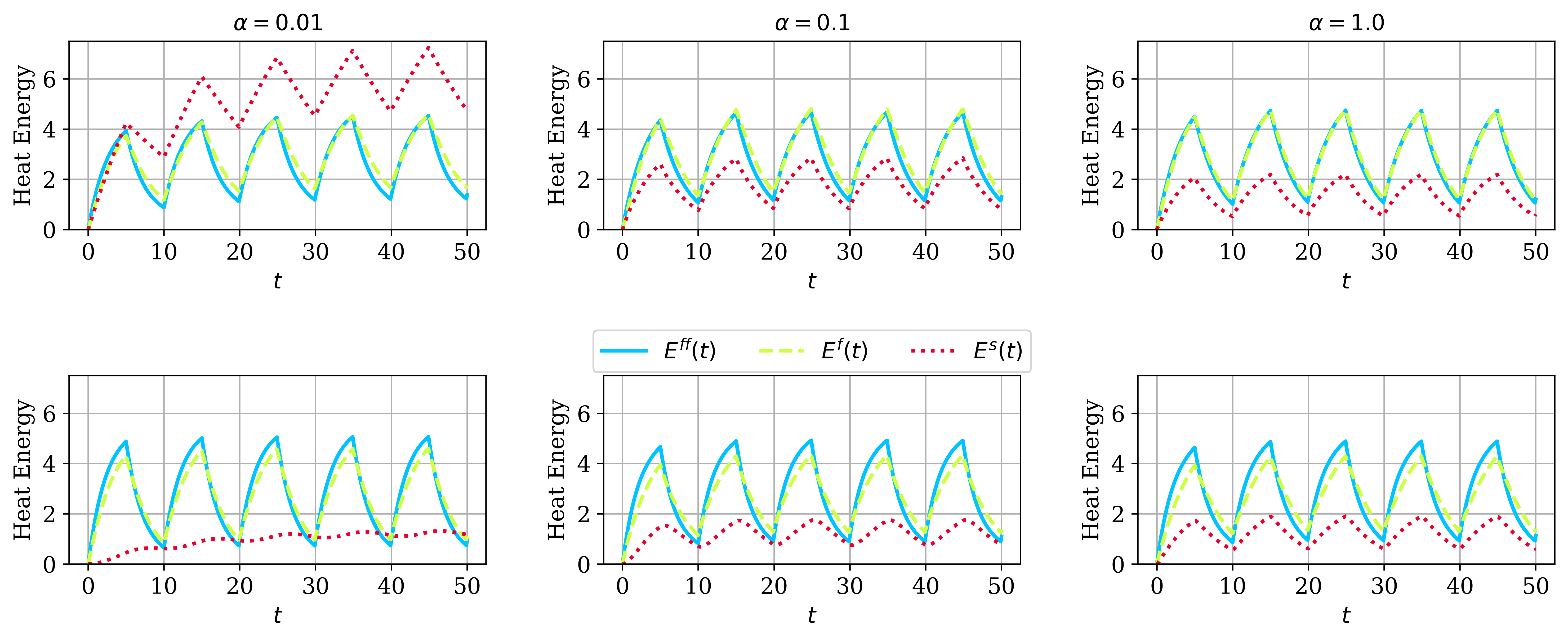}
    \caption{\tfnew{Temporal development of the heat energies (\ref{eq:heat_energy}). For the case of an underlying convection and oscillating heat source. Each column shows the results for a specific heat exchange parameter $\alpha$. The top row corresponds to the connected system the one at the bottom to the disconnected system.}}
    \label{fig:energy_for_alpha}
\end{figure}

\section*{Acknowledgements}
This research was funded by the Deutsche Forschungsgemeinschaft (DFG, German Research Foundation) -- project nr. 439916647.

The research activity of ME is funded by the European Union’s Horizon 2022 research and innovation program under the Marie Skłodowska-Curie fellowship project {\em{MATT}} (project nr.~101061956).
TF acknowledges funding by the Deutsche Forschungsgemeinschaft (DFG) -- project nr. \\ 281474342/GRK2224/2.


\bibliographystyle{siam}
\bibliography{references}

\appendix
\section{\tfnew{Proof of \cref{rem:algo_conv}}}\label{sec:algo_conv}
\begin{proof}
        The values of $\theta^{ff}_{n}, \theta^{f}_{n}, \theta^{s}_{n}$ at the previous time step are known. For the fixed point algorithm we define the iteration step $k$, solutions at step $k$ with  $\theta^{ff}_{k, n+1}, \theta^{f}_{k, n+1}, \theta^{s}_{k, n+1}$ and the difference 
        \begin{equation*}
            e_k^{r} = \theta^{r}_{k+1, n+1} - \theta^{r}_{k, n+1}, \text{ for } r=ff,f,s.
        \end{equation*}
        At iteration $k$, the weak formulation for the cell problems is
        \begin{equation*}
            \begin{split}
                 \int_{\Omega_p  \times Y^s} \tfrac{\rho^s c^s}{\Delta t} &\theta^{s}_{k, n+1} \varphi \di{(x, y)} 
                 + \int_{\Omega_p  \times Y^s} \kappa^s \nabla_y \theta^{s}_{k, n+1} \nabla_y \varphi \di{(x, y)} 
                 + \int_{\Omega_p  \times \Gamma} \alpha \theta^{s}_{k, n+1} \varphi \di{(x, \sigma)} \\
                &= \int_{\Omega_p  \times Y^s} f^{s}_{n+1} \varphi \di{(x, y)} 
                + \int_{\Omega_p  \times \Gamma} \alpha \theta^{f}_{k, n+1}\varphi \di{(x, \sigma)} 
                + \int_{\Omega_p  \times Y^s} \tfrac{\rho^s c^s}{\Delta t}\theta^{s}_{n} \varphi \di{(x, y)}.  
            \end{split}
        \end{equation*}
        Computing the difference between two following iterations, choosing the test function $\varphi = e_k^s$ and using Hölder's inequality on the remaining integral on the right side, leads to the estimate
        \begin{equation}\label{eq:appendix_estimate_s}
            \|e_k^s\|_{L^2(\Omega_p \times \Gamma)} \leq \sqrt{|\Gamma|} \|e_k^f\|_{L^2(\Omega_p)}.
        \end{equation}
        A similar computation (e.g. weak formulation, computing the difference of following iterations and testing with $e_k$) for the fluid temperature gives
        \begin{equation}\label{eq:appendix_fluid_problem}
            \begin{split}
                &\int_{\Omega^{ff}} \tfrac{\rho^f c^f}{\Delta t} e_k^{ff} e_k^{ff} \di{x} 
                + \int_{\Omega^{p}} |Y^f| \tfrac{\rho^f c^f}{\Delta t} e_k^{f} e_k^{f} \di{x} 
                + \int_{\Omega^{ff}} \kappa^f \nabla e_k^{ff} 
                \nabla e_k^{ff} \di{x} 
                + \int_{\Omega^{p}} \kappa^f_h \nabla e_k^{f} 
                \nabla e_k^{f} \di{x} \\
                &- \int_{\Omega^{ff}} \rho^f c^f v e_k^{ff} \nabla e_k^{ff} \di{x} 
                - \int_{\Omega^{p}} \rho^f c^f \Bar{v}_{D} e_k^{f} \nabla e_k^{f} \di{x} + \alpha |\Gamma| \int_{\Omega^p} e_k^{f} e_k^{f} \di{x} = \alpha \int_{\Omega_p \times \Gamma} e_{k-1}^s e_k^f \di{(x, \sigma)}.
            \end{split}
        \end{equation}
        Using \tfnew{the Hölder inequality} and the estimate (\ref{eq:appendix_estimate_s}) on the right hand side, we get
        \begin{equation*}
            \alpha \int_{\Omega_p \times \Gamma} |e_{k-1}^s e_k^f| \di{(x, \sigma)} \leq \alpha |\Gamma| \|e_{k-1}^f\|_{L^2(\Omega^p)} \|e_{k}^f\|_{L^2(\Omega^p)}.
        \end{equation*}
        For the left side of (\ref{eq:appendix_fluid_problem}), we define $\widetilde{e}_k = \chi_{\Omega^{ff}}e_k^{ff} + \chi_{\Omega^{p}}e_k^{p}$, $\widetilde{v} = \chi_{\Omega^{ff}}v + \chi_{\Omega^{p}}\Bar{v}_D$ and let $\lambda > 0$ be the \tfnew{coercivity} constant from \cref{rem:algo_conv}. Then we get the estimate
        \begin{equation*}
            \tfrac{\rho^f c^f |Y^f|}{\Delta t} \|\widetilde{e}_k \|_{L^2(\Omega)}^2
            + \lambda \|\nabla \widetilde{e}_k\|_{L^2(\Omega)}^2
            + \alpha |\Gamma| \|e_k^f \|_{L^2(\Omega^p)}^2
            - \int_{\Omega} c^f \rho^f \widetilde{v} \widetilde{e}_k \nabla \widetilde{e}_k \di{x} 
            \leq \alpha |\Gamma| \|e_{k-1}^f\|_{L^2(\Omega^p)} \|e_{k}^f\|_{L^2(\Omega^p)}.
        \end{equation*}
        \tflast{By Assumption (A4) we have $\|v_\e\|_{L^\infty(S\times \Omega^f)} \leq C_v$ independent of $\e$. By Assumption (A7), $v_\varepsilon \to v$ strongly in $L^2(S\times \Omega^{ff})$ and $\Tilde{v}_\e \rightharpoonup \Bar{v}_D$ weakly in $L^2(S \times \Omega^p)$. Since strong and weak convergence preserve pointwise estimates for almost all $x$, we have $\widetilde{v} \in {L^\infty}(S\times \Omega)$.}
        \tfnew{Using} on the convection term that $\widetilde{v} \in L^\infty(S \times \Omega)$ \tfnew{and applying} Hölder's and Young's inequality with $\delta < 2\lambda$ gives
        \begin{equation}\label{eq:appendix_final}
            \begin{split}
                \left(  \tfrac{\rho^f c^f |Y^f|}{\Delta t} - \tfrac{(\rho^f c^f)^2 \|\widetilde{v}\|_{L^\infty(S \times \Omega)}^2}{2\delta} \right) \|\widetilde{e}_k \|_{L^2(\Omega)}^2 
                &+ (\lambda - \tfrac{\delta}{2}) \|\nabla \widetilde{e}_k\|_{L^2(\Omega)}^2 + \alpha |\Gamma| \|e_k^f \|_{L^2(\Omega^p)}^2 \\
                &\leq \alpha |\Gamma| \|e_{k-1}^f\|_{L^2(\Omega^p)} \|e_{k}^f\|_{L^2(\Omega^p)}.
            \end{split}
        \end{equation}
        This immediately leads to the error estimate inside of $\Omega^p$ of the form
        \begin{equation*}
            \|e_{k}^f\|_{L^2(\Omega^p)} \leq \left( \frac{\alpha |\Gamma|}{\alpha |\Gamma| + \tfrac{\rho^f c^f |Y^f|}{\Delta t} - (\rho^f c^f)^2 \|\Tilde{v}\|_{L^\infty(S \times \Omega)}^2 \tfrac{1}{2\delta}} \right ) \|e_{k-1}^f\|_{L^2(\Omega^p)}.
        \end{equation*}
        With our condition on $\Delta t$, the bracket term is positive and smaller than one. Therefore, iterative application of the argument leads to convergence to a fixed point inside $\Omega^p$. Plugging \tfnew{the last inequality} back into \cref{eq:appendix_final} gives the desired error estimate and convergence in the whole domain $\Omega$.
\end{proof}
\begin{remark}\label{remark_consistency_error}
        To simplify the proof, we assumed that the cell problems (\ref{eq:locall_problem}) for $\theta^s$ are solved everywhere in $\Omega^p$.                
        For the numerical algorithm we solve only on specific points and interpolate the results with an linear operator $\Theta^s$.
        \tflast{One should note, that the use of the interpolation operator leads to a consistency error, which, in addition to the discretization errors of the cell problems and the macroscale domain, influences the accuracy of the numerical solution. In subsequent studies, the influence of the interpolation operator and the interaction with the discretization errors could be further investigated and possible error bounds be determined.}
\end{remark}
\end{document}